\documentclass[10pt]{amsart}\oddsidemargin -1mm \evensidemargin -1mm \topmargin -15mm\headheight5mm \headsep 3.5mm \textheight 245mm\textwidth 170mm 
\usepackage{amsfonts, amssymb, amsthm, amsmath, euscript,multicol}


\numberwithin{equation}{section}
\theoremstyle{plain}
\newtheorem{theorem}[equation]{Theorem}
\newtheorem{proposition}[equation]{Proposition}
\newtheorem{corollary}[equation]{Corollary}
\newtheorem{lemma}[equation]{Lemma}

\newtheorem{conjecture}[equation]{Conjecture}

\theoremstyle{definition}
\newtheorem{definition}[equation]{Definition}
\newtheorem{defn}[equation]{Definition}

\newtheorem{remark}[equation]{Remark}


\newcommand{\beq}{\begin{equation}}
\newcommand{\eeq}{\end{equation}}

\newcommand{\ang}[1]{\langle #1 \rangle}

\newcommand{\mb}{\mathbb}
\newcommand{\mc}{\mathcal}
\newcommand{\mf}{\mathfrak}

\newcommand{\kk}{{\Bbbk}}

\newcommand{\sG}{\mc{G}}

\newcommand{\sP}{\mc{P}}

\newcommand{\CC}{\mb{C}}

\newcommand{\NN}{\mb{N}}

\newcommand{\QQ}{\mb{Q}}
\newcommand{\RR}{\mb{R}}

\newcommand{\VV}{\mb{V}}

\newcommand{\ZZ}{\mb{Z}}

\newcommand{\DMO}{\DeclareMathOperator}
\DMO{\GK}{GKdim}
\DMO{\PGK}{PGKdim}
\DMO{\gr}{gr}
\DMO{\Der}{Der}
\DMO{\trdeg}{trdeg}
\DMO{\Kdim}{Kdim}
\DMO{\codim}{codim}
\DMO{\dd}{d}
\DMO{\pd}{pd}
\DMO{\Spec}{Spec} 
\DMO{\Supp}{Supp}
\DMO{\op}{op}
\DMO{\LT}{LT}

\newcommand{\del}{\partial}
\newcommand{\ssm}{\setminus}
\newcommand{\hra}{\hookrightarrow}
\providecommand{\abs}[1]{\lvert#1\rvert}
\newcommand{\Qbar}{\overline{\QQ}}

\renewcommand{\S}{{\rm S}}
\newcommand{\U}{{\rm U}}
\newcommand{\st}{:}  
\DMO{\spn}{span}

\newcommand{\blambda}{\pmb{\lambda}}
\newcommand{\bmu}{\pmb{\mu}}
\newcommand{\bnu}{\pmb{\nu}}

\title{Ideals in the enveloping algebra of the positive Witt algebra}
\author{Alexey V. Petukhov and Susan J. Sierra}
\date{\today}

\address{Sierra: School of Mathematics, The University of Edinburgh, Edinburgh EH9 3FD, United Kingdom}

\email{s.sierra@ed.ac.uk}

\address{Petukhov: Jacobs University Bremen, Bremen 28759, Germany\\on leave from Institute for Information Transmission problems, Moscow 127051, Russia}
\email{alex-{}-2@yandex.ru}

\keywords{Witt algebra, positive Witt algebra, Poisson algebra, Poisson Gelfand-Kirillov dimension, ascending chain condition}
\subjclass[2010]{Primary:  16S30,  17B63, 17B68; Secondary 16P70, 16P90,  17B65, 17B70}

\begin{document}

\begin{abstract}
Let $W_+$ be the {\em positive Witt algebra}, which has a $\CC$-basis $\{e_n: n \in \ZZ_{\geq 1}\}$, with Lie bracket $[ e_i, e_j] = (j-i) e_{i+j}$. 
We study the two-sided ideal structure of the universal enveloping algebra $\U(W_+)$ of $W_+$. 
We show that if $I$ is a (two-sided) ideal of $\U(W_+)$ generated by quadratic expressions in the $e_i$, then $\U(W_+)/I$ has finite Gelfand-Kirillov dimension, and that such ideals satisfy the ascending chain condition.  
We conjecture that  analogous facts hold for arbitrary ideals of $\U(W_+)$, and verify a version of these conjectures for radical Poisson ideals of the symmetric algebra ${\rm S}(W_+)$.
\end{abstract}

\maketitle

\section{Introduction}\label{INTRO}
Let $\kk$ be a 
field of characteristic zero, and let $W_+$ be the {\em positive Witt algebra}, which has a $\kk$-basis $$\{e_n: n \in \ZZ_{\geq 1}\},$$ with Lie bracket 
\beq\label{Witt}
[ e_i, e_j] = (j-i) e_{i+j}. 
\eeq
This paper studies the two-sided ideal structure of $\U(W_+)$.

In 2013, the second author and Walton proved \cite{SW1} that $\U(W_+)$ is neither left nor right noetherian, by establishing the analogous properties for the quotient ring $B = \U(W_+)/ ( e_1e_5 - 4e_2e_4 + 3e^2_3 + 2e_6)$.  
However,  by \cite[Proposition~6.6]{SW2}, two-sided ideals of $B$ satisfy the ascending chain condition, and  $B$ has Gelfand-Kirillov dimension (GK-dimension) 3.
The main question this paper investigates is how far these properties generalise to arbitrary quotients of $\U(W_+)$.

The enveloping algebra $\U(W_+)$ is highly noncommutative --- it is well-known, for example, that the Weyl algebra $A_n(\kk)$ is a quotient of $\U(W_+)$ for any $n$.
(This can be seen by combining Theorem~4.7.9 and Section 6.2 of \cite{Dix} with the observation that if $n> 1$, factoring out the Lie ideal generated by $e_n$ gives a finite-dimensional Lie algebra of nilpotency class $n-2$.)

One thus  expects that two-sided ideals of $\U(W_+)$ are large, and computer experiments have supported this.  
In fact, all known proper quotients of $\U(W_+)$ have finite GK-dimension, even though $\U(W_+)$ has subexponential growth and thus infinite GK-dimension.
We conjecture:
\begin{conjecture}\label{conj1}
The enveloping algebra $\U(W_+)$ has {\em just infinite GK-dimension} in the sense that if $I$ is a nonzero ideal of $\U(W_+)$, then the GK-dimension of $\U(W_+)/I$ is finite.
\end{conjecture}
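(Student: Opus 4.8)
The plan is to transfer the problem to commutative algebra via principal symbols and then to the Lie--Poisson geometry of $W_+^*$. First, filter $\U(W_+)$ by its standard (PBW) filtration, so that $\gr\U(W_+)=\S(W_+)$ inherits the Lie--Poisson bracket induced from \eqref{Witt}. If $I\neq\U(W_+)$ is a nonzero ideal, pick any $0\neq u\in I$; its symbol $\sigma(u)\in\S(W_+)$ is non-constant (otherwise $u\in\kk^{\times}$ and $I=\U(W_+)$), and $\gr I$ is a Poisson ideal of $\S(W_+)$ containing the Poisson ideal $\langle\sigma(u)\rangle$ generated by $\sigma(u)$. Since passing to the associated graded ring cannot lower GK-dimension,
\[
\GK\bigl(\U(W_+)/I\bigr)\ \le\ \GK\bigl(\S(W_+)/\gr I\bigr)\ \le\ \GK\bigl(\S(W_+)/\langle\sigma(u)\rangle\bigr),
\]
so it suffices to prove $\GK\bigl(\S(W_+)/\langle f\rangle\bigr)<\infty$ for every nonzero $f\in\S(W_+)$.

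Next I would cut the problem down further. The bracket is homogeneous for the weight grading $\deg e_i=i$, so taking associated graded for the weight filtration lets me assume $f$ is weight-homogeneous of some weight $w$, whence $f\in\kk[e_1,\dots,e_w]$. Moreover, in characteristic zero the radical of a Poisson ideal is again a Poisson ideal, it is weight-homogeneous if $f$ is, and the GK-dimension of a commutative ring is unchanged modulo its nilradical (compute $\GK$ on finitely generated subalgebras, where it equals Krull dimension). Hence it is enough to bound $\GK(\S(W_+)/Q)$ when $Q$ is a nonzero weight-homogeneous \emph{radical} Poisson ideal.

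At this point the question becomes geometric. Identifying $\S(W_+)$ with the ring of functions on $W_+^*=\varprojlim(\kk e_1+\dots+\kk e_N)^*$, a nonzero radical Poisson ideal $Q$ cuts out a proper closed subset $Z=V(Q)$ stable under every Hamiltonian flow --- a union of coadjoint ``orbits'' of the positive Witt group $\{t\mapsto t+O(t^2)\}$ acting on $W_+^*$, which one may picture as formal quadratic differentials with a pole at the origin. The goal is to show $Z$ is ``thin'': the cleanest sufficient statement is that $Z\subseteq V(e_{M+1},e_{M+2},\dots)$ for some $M$ --- equivalently $e_n\in Q$ for all $n\gg 0$ --- which at once yields $\GK(\S(W_+)/Q)\le M$, since then $\S(W_+)/Q$ is a quotient of $\kk[e_1,\dots,e_M]$. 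Concretely one studies the Hamiltonian vector fields $X_{e_k}=\{e_k,-\}=\sum_{j\ge 1}(j-k)\,e_{j+k}\,\partial_{e_j}$, noting that $X_{e_1}$ generates the M\"obius flow $t\mapsto t/(1-st)$; because each $X_{e_k}$ shifts indices downward, starting from a single nonzero $f$ and applying $\deg f-1$ suitably chosen brackets ought to produce a nonzero element of degree $\le 1$, that is, a nonzero multiple of some $e_N$, forcing $e_N\in Q$ and hence $e_n\in Q$ for all $n\ge N$.

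The main obstacle is precisely this last, geometric step --- which is why Conjecture~\ref{conj1} is still only a conjecture. It demands a sufficiently complete understanding of the Poisson-stable closed subsets of $W_+^*$, i.e.\ a Kirillov-style description of the coadjoint orbits of the positive Witt group, a problem closely tied to the classical analysis of Virasoro coadjoint orbits (Hill's equation, projective structures). When $f$ is quadratic the relevant orbit combinatorics are manageable and give the unconditional theorem of this paper; for $f$ of higher degree the trouble is that each bracket lowers polynomial degree by only one, so one must iterate many times while the structure constants $j-k$ interact with the weight grading, and there is a genuine risk that all the iterated brackets one can form vanish, so that a new idea --- or the full orbit classification --- is needed. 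Even the special case where $Q$ is prime, so that $V(Q)$ is (roughly) the closure of a single coadjoint orbit and one is asking whether every proper such orbit closure lies in a finite-dimensional coordinate subspace, would already be a substantial step.
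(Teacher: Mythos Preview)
You correctly identify this as an open conjecture and your final paragraph acknowledges the essential obstacle, so there is no completed proof to compare with the paper (which does not prove Conjecture~\ref{conj1} either).  That said, two steps of your outline are not just incomplete but actually problematic, and the paper handles them differently.

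First, the displayed inequality
\[
\GK\bigl(\U(W_+)/I\bigr)\ \le\ \GK\bigl(\S(W_+)/\gr I\bigr)
\]
is not justified.  The standard argument (e.g.\ \cite[Proposition~6.6]{KL}) that passing to the associated graded preserves GK-dimension requires $\gr R$ to be finitely generated as an algebra, and $\S(W_+)$ is not.  The paper flags exactly this issue (see the footnote in the introduction and the discussion opening Section~\ref{DIMENSION}) and works around it by introducing the \emph{Poisson} GK-dimension $\PGK$: Theorem~\ref{thm:GKW2} shows $\GK(\U(W_+)/J)=\PGK(\S(W_+)/K)$ using the combined degree--order filtration and a ``good growth'' argument, and only then does Theorem~\ref{thm:GKW3} identify $\PGK$ with ordinary $\GK$ for \emph{radical} Poisson quotients.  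Your shortcut skips a genuine difficulty.

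Second, your proposed geometric target --- show $e_n\in Q$ for all $n\gg0$ --- and the mechanism you sketch for it are both off.  The Hamiltonian vector field $X_{e_k}=\sum_{j\ge1}(j-k)e_{j+k}\,\partial_{e_j}$ shifts indices \emph{upward} (from $j$ to $j+k$), not downward, and the Poisson bracket $\{e_k,-\}$ \emph{preserves} the order (polynomial degree) on $\S(W_+)$ rather than lowering it by one; so iterating brackets on an order-$m$ element never produces a linear element.  The paper does not attempt to show $e_n\in Q$; instead, for radical $Q$ it proves (Lemma~\ref{Lns}) that $\S(W_+)/Q$ \emph{embeds} in a finitely generated commutative algebra after a localisation, which suffices for finite GK-dimension.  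That is a strictly weaker conclusion than yours and is obtained by a different mechanism: Lemma~\ref{lem:straightforward} uses brackets to express each $x_m$ (for $m$ large) as a rational function in finitely many $x_i$'s modulo $Q$, not to force $x_m=0$.
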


If nontrivial ideals in $\U(W_+)$ are large, it is natural to expect that the lattice of two-sided ideals is well-behaved.  In fact, we conjecture:
\begin{conjecture}\label{conj2}
Two-sided ideals of $\U(W_+)$ satisfy the ascending chain condition:  all strictly ascending chains of ideals are finite.
\end{conjecture}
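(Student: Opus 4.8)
The plan is to reduce Conjecture~\ref{conj2} to an ascending chain condition for Poisson ideals of $\S(W_+)$, and from there to the radical Poisson ideals treated elsewhere in the paper. \textbf{Step 1 (pass to the associated graded).} Give $\U(W_+)$ the standard PBW filtration $\kk = F_0 \subseteq F_1 \subseteq \cdots$, so that $\gr\U(W_+) \cong \S(W_+)$ as graded Poisson algebras, the bracket $\{\overline a,\overline b\}$ being the principal symbol of $[a,b]$. If $I$ is a two-sided ideal of $\U(W_+)$ then $\gr I$ is an ideal of $\S(W_+)$ closed under $\{\,,\,\}$ --- a Poisson ideal --- since $[a,b]\in I$ whenever $a\in I$. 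The filtration is exhaustive with $F_{-1}=0$, so the usual filtered--graded comparison gives: $I\subseteq J$ and $\gr I = \gr J$ force $I = J$. Hence ACC for Poisson ideals of $\S(W_+)$ implies Conjecture~\ref{conj2}. (This upgrades to all of $\U(W_+)$ the mechanism used for the single quotient $B$ in \cite{SW2}.)

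\textbf{Step 2 (the radical case).} Here one invokes the classification of radical Poisson ideals of $\S(W_+)$ carried out in the body of the paper; the consequence we need is that a nonzero radical Poisson ideal $J$ contains $I_{\geq k} := (e_m : m\geq k)$ for some $k = k(J)$ --- equivalently, its zero locus in $W_+^{*}$ lies in the finite-dimensional coordinate subspace $\{e_k = e_{k+1} = \cdots = 0\}$, on which the induced Poisson structure is that of the finite-dimensional nilpotent Lie algebra $W_+/\langle e_m : m\geq k\rangle$. Since $\S(W_+)/I_{\geq k} \cong \kk[e_1,\dots,e_{k-1}]$ is Noetherian, given an ascending chain $J_1\subseteq J_2\subseteq\cdots$ of radical Poisson ideals the integers $k(J_n)$ are nonincreasing, hence eventually equal to some $k$; beyond that point every $J_n$ contains $I_{\geq k}$, so the chain descends to a chain of ideals of $\kk[e_1,\dots,e_{k-1}]$ and stabilises.

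\textbf{Step 3 (bootstrap to all Poisson ideals) --- the main obstacle.} In characteristic zero $\sqrt{J}$ is Poisson whenever $J$ is, so an ascending chain $J_1\subseteq J_2\subseteq\cdots$ of Poisson ideals yields an ascending chain of radicals that stabilises at some $P$ (for $n\geq N$) by Step~2; it remains to rule out an infinite strictly ascending tail $J_N\subsetneq J_{N+1}\subsetneq\cdots$ all with radical $P$. The temptation is to argue that such $J_n$, having radical $P\supseteq I_{\geq k}$, must themselves contain some $I_{\geq k'}$ and hence live over the Noetherian ring $\kk[e_1,\dots,e_{k-1}]$; but this is false --- $(e_ie_j : i,j\geq N) = I_{\geq N}^{2}$ is a Poisson ideal with radical $I_{\geq N}$ that contains no $I_{\geq k'}$, and $\S(W_+)$ is genuinely non-Noetherian in these non-reduced directions. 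So one must instead control the $\{\,,\,\}$-stable submodules of the successive quotients $I_{\geq k}^{m}/I_{\geq k}^{m+1}$ (infinite-rank free modules over $\kk[e_1,\dots,e_{k-1}]$), show that ascending chains of Poisson ideals squeezed between consecutive powers of $I_{\geq k}$ terminate, and patch this across $m$ and across $k$. This analysis --- controlling Poisson ideals of $\S(W_+)$ in the non-reduced directions --- is precisely the distance between what is established in this paper (ACC for radical Poisson ideals of $\S(W_+)$, and for two-sided ideals of $\U(W_+)$ generated in degree~$2$) and the full force of Conjecture~\ref{conj2}, and I expect it to be the crux.
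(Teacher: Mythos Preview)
The statement you are addressing is \emph{Conjecture}~\ref{conj2}; the paper does not prove it, and says so explicitly in the introduction (``Although we do not prove either conjecture, we make progress towards both\ldots'').  So there is no proof in the paper to compare your attempt against, and your own write-up is candid that Step~3 is unresolved.  What you have written is a plausible strategic outline, not a proof, and you should present it as such.

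That said, your Step~2 contains a substantive error that you should fix even at the level of outline.  You assert that ``a nonzero radical Poisson ideal $J$ contains $I_{\geq k}:=(e_m:m\geq k)$ for some $k$'', equivalently that $\S(W_+)/J$ is a quotient of $\kk[x_1,\dots,x_{k-1}]$.  The paper does \emph{not} prove this, and it is not what Lemma~\ref{Lns} says: that lemma only gives an \emph{embedding} of $\S(W_+)/J$ into a finitely generated algebra, obtained by inverting an element $p$ and observing that after inversion the higher $x_m$ become polynomial combinations of $x_1,\dots,x_{2n+2},p^{-1}$.  The images of the $x_m$ in $\S(W_+)/J$ are not forced to vanish for large $m$; they are merely algebraically dependent on the earlier ones after localisation.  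So the reduction in your Step~2 to ideals of the noetherian ring $\kk[e_1,\dots,e_{k-1}]$ is not available.  The paper's actual proof of ACC for radical Poisson ideals (Theorem~\ref{thm:acc}) goes instead through finiteness of Krull dimension (Corollary~\ref{cor2}) and finiteness of the set of minimal primes (Corollary~\ref{Cns}), combined in the purely combinatorial Lemma~\ref{lemma:stab}.

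Your Step~1 is correct and is exactly the reduction the paper notes before Corollary~\ref{cor:acc}: ACC on Poisson ideals of $\S(W_+)$ would imply Conjecture~\ref{conj2}.  Your Step~3 correctly locates the genuine obstruction --- passing from radical to arbitrary Poisson ideals --- and your example $I_{\geq N}^2$ is apt; but note that the paper addresses this non-reduced regime only partially (via noetherianity of $\S^2(W_+)$, Theorem~\ref{thm:S2}), and your proposed programme of controlling the $W_+$-submodules of $I_{\geq k}^m/I_{\geq k}^{m+1}$ is essentially the content of Remark~\ref{rem:Sm} and the open conjecture stated there.
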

The second conjecture,  asked in \cite[Question~0.11]{SW2}, was first brought to the second author's attention by Lance Small. 

The first author and Penkov have  shown that the ideal structure of enveloping algebras of infinite-dimensional Lie algebras can be extremely sparse; for example, for the majority of locally simple Lie algebras $\mathfrak g_\infty$, the universal enveloping algebra $\U(\mathfrak g_\infty)$ has only finitely many two-sided ideals by  \cite[Corollary 3.2 and Section~6]{PP1}. 
Further, the analogue of Conjecture~\ref{conj2} holds for $\U(\mathfrak{sl}(\infty))$ by \cite[Corollary~5.4]{PP2}.
In general,  two-sided ideals of enveloping algebras of infinite-dimensional Lie algebras form an interesting area of research with many  unexpected phenomena.

Although we do not prove either conjecture, we make progress towards both, establishing several partial results that support the conjectures.
Our key method is to work with the symmetric algebra $\S(W_+)$ under the natural Poisson structure induced from $\U(W_+)$. 
It is well-known that ideals of $\U(W_+)$ give rise, via the associated graded construction, to Poisson ideals of $\S(W_+)$.
We show (Lemma~\ref{Lns}) that if $I$ is a nontrivial radical Poisson ideal of $\S(W_+)$ then $\S(W_+)/I$ embeds in a finitely generated commutative algebra.  As a consequence, we obtain:
\begin{theorem}\label{ithm1}
(Corollary~\ref{Cns}, Corollary~\ref{cor1})
Let $K$ be a nontrivial Poisson ideal of $\S(W_+)$.
Then $K$ has finitely many minimal primes, and $\S(W_+)/K$ has finite GK-dimension.
\end{theorem}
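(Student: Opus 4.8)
The plan is to reduce the statement about an arbitrary Poisson ideal $K$ to the case of radical Poisson ideals, where Lemma~\ref{Lns} applies. The key observation is that $\sqrt{K}$ is again a Poisson ideal of $\S(W_+)$: in characteristic zero the radical of any Poisson ideal in a commutative Poisson algebra is Poisson (a standard fact, provable by differentiating $f^n \in K$ and using that $\{-,-\}$ is a derivation in each slot). So $\sqrt{K}$ is a radical Poisson ideal, and if $K$ is nontrivial then so is $\sqrt{K}$ (it cannot be all of $\S(W_+)$). By Lemma~\ref{Lns}, $\S(W_+)/\sqrt{K}$ embeds into a finitely generated commutative $\kk$-algebra $R$.

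**Next I would** deduce the two conclusions. For the minimal primes: $R$ is noetherian, hence has finitely many minimal primes; since $\S(W_+)/\sqrt{K} \hookrightarrow R$, the contraction of each minimal prime of $R$ covers the minimal primes of $\S(W_+)/\sqrt{K}$, so there are only finitely many of those. But the minimal primes over $K$ coincide with the minimal primes over $\sqrt{K}$ (standard commutative algebra: $\sqrt{K} = \bigcap_{\mf p \supseteq K \text{ minimal}} \mf p$), which correspond to the minimal primes of $\S(W_+)/\sqrt{K}$. Hence $K$ has finitely many minimal primes. For the GK-dimension: $\GK \S(W_+)/K = \GK \S(W_+)/\sqrt{K}$ since GK-dimension is insensitive to nilpotents (it depends only on the underlying filtered/graded vector space structure up to the nilradical — for a commutative affine-type algebra, $\GK A = \GK A_{red}$), and $\GK \S(W_+)/\sqrt{K} \leq \GK R = \Kdim R < \infty$ because GK-dimension is monotone under subalgebras and equals Krull dimension for finitely generated commutative algebras.

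**The one point requiring a little care** is the reduction $\GK \S(W_+)/K = \GK \S(W_+)/\sqrt{K}$ for a possibly non-finitely-generated algebra: here one uses that $\sqrt{K}/K$ is a nil ideal, and for any algebra $A$ with nil ideal $N$ one has $\GK A = \GK A/N$ when $N$ is, say, nilpotent, or more carefully, that $\GK$ is computed on finitely generated subalgebras and each such subalgebra $B \subseteq \S(W_+)/K$ maps to $\bar B \subseteq \S(W_+)/\sqrt{K}$ with nilpotent kernel (as $B$ is noetherian, being a quotient of a polynomial ring — wait, $\S(W_+)$ is not noetherian, but any finitely generated commutative $\kk$-algebra is, and $B$ is such), giving $\GK B = \GK \bar B$. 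This is routine but should be spelled out. I do not expect any genuine obstacle: the substance of the theorem is entirely contained in Lemma~\ref{Lns}, and the corollary is a formal consequence via the radical-is-Poisson trick plus standard commutative algebra.
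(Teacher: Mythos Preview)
Your proposal is correct and follows essentially the same route as the paper: reduce to the radical $\sqrt{K}$ via Lemma~\ref{Lprad}, apply Lemma~\ref{Lns} to embed $\S(W_+)/\sqrt{K}$ into a finitely generated commutative algebra, and read off both conclusions (pulling back the finitely many minimal primes, and bounding GK-dimension by monotonicity). Your careful remark on why $\GK(\S(W_+)/K)=\GK(\S(W_+)/\sqrt{K})$ in the non-noetherian setting is exactly the content of Proposition~\ref{Ldpr}(e) in the paper.
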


Using this result, we show:
\begin{theorem}\label{ithm2}
(Theorem~\ref{thm:acc})
The algebra $\S(W_+)$ satisfies the ascending chain condition on radical Poisson ideals.
\end{theorem}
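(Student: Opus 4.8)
The plan is a Noetherian-style induction on Gelfand--Kirillov dimension, with Theorem~\ref{ithm1} as the one serious input. Suppose for contradiction that $\S(W_+)$ admits an infinite strictly ascending chain of radical Poisson ideals; discarding the zero ideal if it occurs (it can only be the smallest term), we may assume the smallest term is nonzero, so that the GK-dimension of the corresponding quotient is finite by Theorem~\ref{ithm1}. Among all such chains choose one, $I_1 \subsetneq I_2 \subsetneq \cdots$, for which $d := \GK(\S(W_+)/I_1)$ is minimal. Throughout I use the elementary fact that a proper quotient of a commutative $\kk$-algebra domain of finite GK-dimension has strictly smaller GK-dimension --- equivalently, strictly smaller transcendence degree of its fraction field, which one sees by adjoining a nonzero element of the kernel to a lift of a transcendence basis of the quotient --- together with the fact that $\GK(\S(W_+)/K)$ equals the maximum of $\GK(\S(W_+)/Q)$ over the minimal primes $Q$ of a proper radical Poisson ideal $K$, of which there are finitely many by Theorem~\ref{ithm1}.

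The first step is to reduce to the case where the smallest term is a Poisson prime. Write $I_1 = P_1 \cap \cdots \cap P_r$ as the intersection of its (finitely many, by Theorem~\ref{ithm1}) minimal primes, each of which is Poisson since $\operatorname{char}\kk = 0$, nonzero, and of quotient GK-dimension $\le d$. Since $I_n$ is radical and $P_1 \cap \cdots \cap P_r = I_1 \subseteq I_n$, an elementary calculation gives $I_n = \bigcap_{j=1}^{r} \sqrt{I_n + P_j}$ for every $n$; as sums and radicals of Poisson ideals are again Poisson in characteristic zero, each $\bigl(\sqrt{I_n + P_j}\bigr)_n$ is an ascending chain of radical Poisson ideals, and since $(I_n)_n$ is not eventually constant, neither is $\bigl(\sqrt{I_n + P_j}\bigr)_n$ for some $j$. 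A strictly ascending subchain of the latter is an infinite strictly ascending chain of radical Poisson ideals whose smallest term $L_1$ contains $\sqrt{I_1 + P_j} = P_j$ and has $\GK(\S(W_+)/L_1) \le \GK(\S(W_+)/P_j) \le d$; minimality of $d$ forces all three GK-dimensions to equal $d$, and then $L_1 = P_j$, for if $L_1 \supsetneq P_j$ then each minimal prime of $L_1$ would strictly contain $P_j$ and hence give a proper quotient of the domain $\S(W_+)/P_j$ of GK-dimension $< d$, whereas $\GK(\S(W_+)/L_1)$ is the maximum of these and equals $d$. Renaming, we obtain an infinite strictly ascending chain $P = L_1 \subsetneq L_2 \subsetneq \cdots$ of radical Poisson ideals with $P$ a nonzero Poisson prime and $\GK(\S(W_+)/P) = d$.

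Now $L_2 \supsetneq P$, so each of the finitely many minimal primes $Q$ of $L_2$ strictly contains $P$; thus $\S(W_+)/Q$ is a proper quotient of the domain $\S(W_+)/P$, so $\GK(\S(W_+)/Q) < d$, and therefore $\GK(\S(W_+)/L_2) = \max_Q \GK(\S(W_+)/Q) < d$. But $(L_n)_{n \ge 2}$ is then an infinite strictly ascending chain of radical Poisson ideals with nonzero smallest term $L_2$ and quotient GK-dimension $< d$, contradicting the minimality of $d$. Hence $\S(W_+)$ satisfies the ascending chain condition on radical Poisson ideals.

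I expect the real work to be organisational rather than analytic: setting up the minimal counterexample and the reduction to a prime smallest term so that every branch strictly decreases $d$, and checking the characteristic-zero permanence properties used (that sums of Poisson ideals, radicals of Poisson ideals, and minimal primes over a Poisson ideal are all Poisson). Once Theorem~\ref{ithm1} is in hand, the only computations are the identity $I_n = \bigcap_j \sqrt{I_n + P_j}$ and the fact that GK-dimension drops under proper quotients of commutative $\kk$-algebra domains of finite GK-dimension, both of which are routine.
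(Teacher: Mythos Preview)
Your argument is correct and rests on the same idea as the paper's: an induction on dimension, with Theorem~\ref{ithm1} supplying both finiteness of minimal primes and finiteness of (GK- or Krull-) dimension, and the key step being that a prime strictly containing another prime has strictly smaller dimension. The organisation differs. The paper isolates a general commutative-algebra statement (Lemma~\ref{lemma:stab}): given an ascending chain of radical ideals $I_1\subseteq I_2\subseteq\cdots$ with $\Kdim(A/I_1)<\infty$ and each $I_j$ having finitely many minimal primes, it tracks the minimal primes $P_{j,i}$ of every $I_j$, observes that those of top codimension must eventually coincide, and then inducts on the intersection of the remaining (lower-codimension) primes. You instead fix once and for all the minimal primes $P_1,\dots,P_r$ of $I_1$ and split the entire chain via the identity $I_n=\bigcap_j\sqrt{I_n+P_j}$, thereby reducing in one move to a chain whose bottom term is a single Poisson prime; the dimension then drops at the very next step. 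Your decomposition is a neat shortcut and avoids bookkeeping across the whole chain, at the cost of needing the Poisson permanence facts (radicals and sums of Poisson ideals are Poisson) that the paper's purely commutative Lemma~\ref{lemma:stab} does not require. Either way, the substance---Theorem~\ref{ithm1} plus ``proper prime inclusion forces a strict dimension drop''---is identical.
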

It follows that $\U(W_+)$ satisfies the ascending chain condition on ideals whose associated graded ideal is radical, see Corollary~\ref{cor:acc}.

We then turn to studying the GK-dimension of quotients of $\U(W_+)$ more directly.
 For a Poisson algebra $A$, we define the  {\em Poisson Gelfand-Kirillov dimension} $\PGK A$, which measures the growth of $A$ as a Poisson algebra.  
 We show (Theorem~\ref{thm:GKW2}) that the GK-dimension of a quotient $R$ of $\U(W_+)$ is equal to the Poisson GK-dimension of the associated quotient of $\S(W_+)$.  \footnote{Since $\S(W_+)$ is not finitely generated as an algebra, there is no clear reason for the  GK-dimension of the associated quotient of $\S(W_+)$ to give a bound on the GK-dimension of $R$ in general.}
 We further show:
 \begin{theorem}\label{ithm3}
 If $K$ is a nontrivial radical Poisson ideal of $\S(W_+)$, then 
 \[\PGK \S(W_+)/K = \GK \S(W_+)/K,\]
  which we have seen previously is finite.
 \end{theorem}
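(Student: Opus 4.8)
The inequality $\GK\S(W_+)/K\le\PGK\S(W_+)/K$ holds for any Poisson algebra, since an associative generating subspace is in particular a generating subspace for the Poisson structure; as the right-hand side is finite by Theorem~\ref{ithm1}, it remains to prove $\PGK\S(W_+)/K\le\GK\S(W_+)/K$.

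The plan is to exploit the \emph{weight grading} on $\S(W_+)$ defined by $\deg e_n=n$, which has two features that together make the estimate work: both multiplication and the Poisson bracket are homogeneous of degree $0$ for it (indeed $\{e_i,e_j\}=(j-i)e_{i+j}$ has weight $i+j$, exactly like $e_ie_j$), and each homogeneous component $\S(W_+)_m$ is finite-dimensional. Write $\bar A=\S(W_+)/K$, let $\bar e_n$ be the class of $e_n$, and for a subspace $V$ let $\langle V\rangle_n$ denote the span of all Poisson words of length $\le n$ in elements of $V$. Since $\bar A$ is generated as an algebra by the $\bar e_n$, every finite-dimensional subspace lies in $\langle V_k\rangle_m$ for some $k$ and $m$, where $V_k=\kk\bar e_1+\dots+\kk\bar e_k$; so it suffices to bound $\limsup_n\log\dim\langle V_k\rangle_n/\log n$ for each fixed $k$. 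A Poisson word of length $\le n$ in $\bar e_1,\dots,\bar e_k$ has weight $\le kn$, hence $\langle V_k\rangle_n$ is contained in the image $U_{kn}$ of $\bigoplus_{m\le kn}\S(W_+)_m$ in $\bar A$, which is finite-dimensional.

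If $K$ is weighted-homogeneous this already finishes the proof: $\bar A$ is then a connected $\ZZ_{\ge0}$-graded algebra with finite-dimensional components, $U_{kn}=\bigoplus_{m\le kn}\bar A_m$, and one shows that $\GK\bar A$ is the growth exponent of $M\mapsto\sum_{m\le M}\dim\bar A_m$; this last point is where the structural description underlying Lemma~\ref{Lns} enters — it is used to show that the finitely generated subalgebras $\operatorname{im}(\kk[e_1,\dots,e_N])\subseteq\bar A$ have GK-dimension \emph{and} leading growth bounded uniformly in $N$, which is what prevents the Hilbert function of $\bar A$ from outgrowing $M^{\GK\bar A}$. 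For a general radical Poisson ideal $K$, pass to the initial ideal $\operatorname{in}(K)$ with respect to the weight filtration. It is a weighted-homogeneous Poisson ideal; passing to the associated graded can only increase Poisson growth; and so the homogeneous case gives $\PGK\bar A\le\PGK(\S(W_+)/\operatorname{in}(K))=\GK(\S(W_+)/\operatorname{in}(K))$. It therefore suffices to know that the weight degeneration preserves GK-dimension, i.e.\ $\GK(\S(W_+)/\operatorname{in}(K))=\GK\bar A$.

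This equality is the main obstacle. One direction is automatic. For the other, reduce first to $K$ prime: by Theorem~\ref{ithm1} there are finitely many minimal primes $P_1,\dots,P_r$ over $K$, each Poisson since $\operatorname{char}\kk=0$, and $\GK\bar A=\max_i\GK(\S(W_+)/P_i)$, $\PGK\bar A=\max_i\PGK(\S(W_+)/P_i)$ — the latter because $\bar A$ embeds as a Poisson algebra in $\prod_i\S(W_+)/P_i$ while each $\S(W_+)/P_i$ is a Poisson quotient of $\bar A$. For $K$ prime, Lemma~\ref{Lns} embeds $\bar A$ into a finitely generated commutative domain, so $\GK\bar A=\trdeg_\kk\operatorname{Frac}\bar A$; applying Theorem~\ref{ithm1} again, now to $\sqrt{\operatorname{in}(K)}$, controls the minimal primes of $\operatorname{in}(K)$, and then a dimension-theoretic argument for the flat Rees degeneration shows this transcendence degree is unchanged in the limit. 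The technical heart of the proof is exactly this transfer of the finiteness and embedding statements of Theorem~\ref{ithm1} and Lemma~\ref{Lns} across the weight degeneration; the remaining steps are formal manipulations of the two gradings.
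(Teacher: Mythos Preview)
Your approach is genuinely different from the paper's, and the places you yourself label ``the technical heart'' and ``the main obstacle'' are real gaps, not formalities.

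The paper does not use the weight filtration or any Rees degeneration. Instead it rewrites $\PGK A$ as a module GK-dimension: with $D=A\langle\partial_1,\partial_2\rangle$ generated over $A$ by the derivations $\partial_i=\{y_i,-\}$, Proposition~\ref{prop:PGK2} gives $\PGK A=\GK_D A$. For $K$ prime the paper then passes to the fraction field $L=Q(A)$ and bounds $\GK_D A\le\GK_{D(L)}L$, which equals $\trdeg L=\Kdim A=\GK A$ by a result on differential operators on finitely generated field extensions (Proposition~\ref{prop:Smithcor}, following Smith). The non-prime case reduces to this via the finite primary decomposition of Corollary~\ref{Cns}. No Hilbert-series comparison and no degeneration are needed.

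Your outline, by contrast, leaves both essential steps undone. In the homogeneous case you assert that $\GK\bar A$ equals the growth exponent of $M\mapsto\dim\bar A_{\le M}$, but for a graded algebra that is \emph{not} finitely generated as an algebra this is false in general; the sentence ``this last point is where Lemma~\ref{Lns} enters'' is not an argument. (It can be salvaged: from the proof of Lemma~\ref{Lns} one has $p\,x_m\in B$ for all large $m$, hence $p^{o}\bar A_m\subseteq B$ whenever $o$ bounds the order, so $\dim\bar A_m\le\dim B_{m(1+\deg p)}$ --- but you must supply this.) In the non-homogeneous case matters are worse: $\operatorname{in}(K)$ need not be radical, so your ``homogeneous case'' does not apply to it, and the ``dimension-theoretic argument for the flat Rees degeneration'' is only gestured at; in an infinitely generated polynomial ring the usual flatness/semicontinuity tools require justification you have not given. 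A minor slip: in your first paragraph you invoke Theorem~\ref{ithm1} for finiteness of the \emph{Poisson} GK-dimension, but that theorem bounds only $\GK$.
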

Therefore, if $I$ is an ideal of $\U(W_+)$ whose associated graded ideal is radical, then $\GK \U(W_+)/I < \infty$, and thus Conjectures~\ref{conj1} and \ref{conj2} both hold for ideals whose associated graded ideal is radical.
 
 We then turn our attention to quadratic elements in the symmetric algebra, i.e. elements of $\S^2(W_+)$.
 Through explicit computations, we show that $ \S^2(W_+)$ is a noetherian $W_+$-module (Theorem~\ref{thm:S2}), and as a consequence that $\S(W_+)$ satisfies the ascending chain condition on Poisson ideals generated by quadratic elements.  
 Finally, we show:
 
 \begin{theorem}\label{ithm4}
 (Corollary~\ref{cor:GKsofar})
 If $I$ is an ideal of $\U(W_+)$ that contains a quadratic expression in the $e_i$, then $\U(W_+)/I$ has finite GK-dimension.
 \end{theorem}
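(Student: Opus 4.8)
The plan is to transport the problem into the Poisson symmetric algebra via Theorem~\ref{thm:GKW2} and then to feed in the analysis of $\S^2(W_+)$ as a $W_+$-module performed for Theorem~\ref{thm:S2}. By Theorem~\ref{thm:GKW2}, $\GK \U(W_+)/I = \PGK \S(W_+)/\gr I$, so it is enough to prove $\PGK \S(W_+)/J < \infty$, where $J := \gr I$. A ``quadratic expression in the $e_i$'' is precisely an element of PBW-filtration degree at most $2$, so the hypothesis says $I$ contains such a nonzero element; since $J$ is a graded ideal of $\S(W_+) = \gr \U(W_+)$, it therefore contains a nonzero homogeneous element $q$ of degree $\le 2$, and, assuming (harmlessly) that $I \ne \U(W_+)$, we have $\deg q \in \{1, 2\}$.

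If $\deg q = 1$, so $0 \ne q \in \S^1(W_+) = W_+$, the argument is short and I will only sketch it: then $J \cap W_+$ is a nonzero $W_+$-submodule of $W_+$, hence of finite codimension, so $J$ contains $e_n$ for all $n \ge n_0$; thus $\S(W_+)/J$ is a Poisson quotient of $\S(W_+)/(e_n : n \ge n_0) \cong \kk[e_1, \dots, e_{n_0-1}]$, and since a Poisson word of length $n$ in $\S(W_+)$ is a polynomial of degree at most $n$, the Poisson GK-dimension of $\kk[e_1, \dots, e_{n_0-1}]$ is at most $n_0 - 1$, so $\PGK \S(W_+)/J < \infty$. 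So assume from now on that $q \in \S^2(W_+) \setminus \{0\}$.

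Let $M \subseteq \S^2(W_+)$ be the $W_+$-submodule generated by $q$ and set $P := \S(W_+) \cdot M$. A Leibniz-rule computation shows that $P$ is a Poisson ideal — in fact the smallest one containing $q$ — so $P \subseteq J$ and $\PGK \S(W_+)/J \le \PGK \S(W_+)/P$. The crucial input I would draw from the computations behind Theorem~\ref{thm:S2}, where $\S^2(W_+)$ is worked out as a $W_+$-module, is this: \emph{every nonzero $W_+$-submodule $M$ of $\S^2(W_+)$ contains all products $e_i e_j$ with $i, j \ge N$, for some integer $N = N(M)$.} Granting this, $P \supseteq (e_i e_j : i, j \ge N)$, so $\S(W_+)/J$ is a Poisson quotient of $R_N := \S(W_+)/(e_i e_j : i, j \ge N)$, and it remains to see that $\PGK R_N < \infty$. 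As a $\kk[e_1,\dots,e_{N-1}]$-module, $R_N$ is free on $\{1\} \cup \{e_j : j \ge N\}$; equivalently $R_N$ has $\kk$-basis the monomials $e_1^{a_1}\cdots e_{N-1}^{a_{N-1}}$ and $e_j e_1^{a_1}\cdots e_{N-1}^{a_{N-1}}$ with $j \ge N$. A Poisson word of length $n$ in $e_1, e_2$ is a polynomial of degree at most $n$ and, weighting $e_i$ by $i$, of weight at most $2n$; expanding its image in this basis one meets only monomials of degree $\le n$ and weight $\le 2n$, of which there are $O(n^{N-1})$ of the first kind and $O(n^N)$ of the second (the index $j$ of the large variable being forced below $2n$). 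Hence the span of the length-$\le n$ Poisson words in $R_N$ has dimension $O(n^N)$, so $\PGK R_N \le N < \infty$. Combining, $\PGK \S(W_+)/\gr I \le \PGK \S(W_+)/P \le \PGK R_N < \infty$, whence $\GK \U(W_+)/I < \infty$.

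The step I expect to be the genuine obstacle is the italicized structural statement about $W_+$-submodules of $\S^2(W_+)$. It is not a formal consequence of noetherianity: for instance $\U(W_+) \cdot e_1^2$ is a nonzero submodule consisting exactly of the quadratic forms no monomial of which is divisible by $e_2$ — so it is very far from cofinite — and yet it still contains every $e_i e_j$ with $i, j \ge 3$. This argument deliberately sidesteps a ``reduce to the radical and apply Theorem~\ref{ithm3}'' strategy, which is delicate because $P$ need not be radical and controlling the distance from $P$ to $\sqrt P$ would need the same structural information. And the reason the method stops short of Conjectures~\ref{conj1} and~\ref{conj2} in general is precisely that no comparable description is known for $W_+$-submodules of $\S^d(W_+)$ with $d \ge 3$, i.e. for ideals $I$ meeting the PBW filtration only in degrees greater than $2$.
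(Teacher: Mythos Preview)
Your overall architecture matches the paper's exactly: reduce via Theorem~\ref{thm:GKW2} to a statement about $\PGK\S(W_+)/J$, then exploit the $W_+$-module structure of $\S^2(W_+)$ established in Section~\ref{QUADRATIC}. The degree-one case is fine, and your counting for $R_N$ is correct. The gap is precisely where you flag it.

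The italicized claim---that every nonzero $W_+$-submodule $M$ of $\S^2(W_+)$ contains all $x_ix_j$ with $i,j\ge N$, i.e.\ $M\supseteq S(N)$ for some $N$---is \emph{not} what the computations behind Theorem~\ref{thm:S2} yield, and is strictly stronger. What those computations give (packaged as Lemma~\ref{lem:kl}) is a statement about \emph{leading terms}: there is $\gamma_0=(k_0,\ell_0)\in\Gamma$ with $\gamma(M)\supseteq\gamma_0+\Gamma$. Two points make the distance visible. First, $\gamma_0+\Gamma=\{(a,b):a>k_0,\ b-a\ge\ell_0-k_0\}$ is a wedge with a fixed gap $\ell_0-k_0>0$; it never contains any diagonal point $(a,a)$, so even at the level of leading terms one does not get the full triangle $\{a,b\ge N\}$. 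Second, knowing $(a,b)\in\gamma(M)$ only produces an element of $M$ with leading term $x_ax_b$; upgrading this to $x_ax_b\in M$ would require eliminating lower terms $x_{a'}x_{b'}$ with $a'>a$, and for these one has no control near the diagonal. Your own example $\U(W_+)\cdot e_1^2$ happens to satisfy the claim, but nothing in Lemma~\ref{lem:S2one} or its proof shows that an arbitrary submodule must contain some $x_i^2$; I do not see how to extract this from the paper's computations without substantial further argument.

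The paper avoids this difficulty entirely. It stays at the level of leading terms: from $\gamma(M)\supseteq(k,\ell)+\Gamma$ one gets, for each $x_ix_j$ with $i\ge k$ and $j-i\ge\ell-k$, an element $f_{ij}\in J$ with $\LT(f_{ij})=x_ix_j$ under the graded reverse lexicographic order. Since $\prec$ is a monomial order, these lift multiplicatively, and a Gr\"obner-style reduction shows that every $g\in\S(W_+)$ is congruent modulo $J$ to a combination of monomials not in the monomial ideal $J(k,\ell)=(x_ix_j:i\ge k,\ j-i\ge\ell-k)$. The growth of $\S(W_+)/J(k,\ell)$ is then bounded directly (Lemma~\ref{lem:part2}), giving $\PGK\S(W_+)/J\le\ell$. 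So the correct ``crucial input'' is the leading-term wedge of Lemma~\ref{lem:kl}, not containment of $S(N)$; once you use that, your counting argument becomes the content of Lemma~\ref{lem:part2}.
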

 
 Recall that $W_+$ is a subalgebra of the (full) {\em  Witt algebra} $W$, which has a $\kk$-basis $\{ e_n \st n \in \ZZ\}$ and Lie bracket defined by \eqref{Witt}.  
 Recall also that $W$ is obtained from the {\em Virasoro algebra} $V$ (which we do not define) by setting the central charge equal to zero.
 We conjecture that analogues of Conjectures~\ref{conj1} and \ref{conj2} and Theorem~\ref{ithm4} hold for $\U(W)$ and $\U(V)$.  
 These questions will be the subject of future work.
 
 The organisation of the paper is as follows.  Section~\ref{RADICALS}, where   we prove Theorems~\ref{ithm1} and \ref{ithm2}, focuses on quotients of $\S(W_+)$ by radical Poisson ideals.
 In Section~\ref{DIMENSION} we define the Poisson Gelfand-Kirillov dimension of a Poisson algebra, give some of its properties, and prove Theorem~\ref{ithm3}.
 In Section~\ref{QUADRATIC} 	we study the structure of $\S^2(W_+)$ and prove Theorem~\ref{ithm4}.
 This proof involves  computer calculations which are discussed in an appendix.

{\bf Acknowledgements: }  
The first author was supported by Leverhulme Trust Grant RPG-2013-293 and RFBR grant 16-01-00818. The second author was supported by EPSRC grant EP/M008460/1.

We would like to thank  Jacques Alev, Tom Lenagan, Omar Leon Sanchez, Paul Smith and Toby Stafford for helpful discussions.  We would particularly like to thank Ioan Stanciu, whose computer experiments, done as part of his MMath dissertation at the University of Edinburgh, gave us experimental evidence for Conjecture~\ref{conj1}.

\section{Poisson ideals}\label{RADICALS}
We begin by collecting some basic properties of Poisson algebras, and then move to deriving consequences for $\S(W_+)$.  We note that all Poisson algebras in this paper are commutative as algebras.

Our convention is that  $\NN$ is equal to the set of nonnegative integers, and $\ZZ_{\geq 1}$ is the set of positive integers.

\subsection{Operations on ideals} 
Since we will be working with the non-noetherian ring $\S(W_+) \cong \kk[x_1, x_2, \dots]$, we recall some basic concepts in commutative algebra which do not depend on the ascending chain condition.  

Throughout the next two  subsections $A$ is a Poisson $\kk$-algebra, $I$ is a Poisson ideal of $A$, and $a, b, c$ are elements of $A$. 

Recall that
$$(I: b):=\{a\in A\st ab\in I\},$$
and note $I \subseteq (I:b)$.
Also recall that an ideal $I$ is {\it radical} if $I=\sqrt I :=\{a\in A\st a^n\in I \mbox{ for some } n \in \NN\}$.

Define
$$(I: b^\infty):=\{a\in A\st  ab^n\in I  \mbox{ for some } n \in \NN\},$$
$$ (I\hat+ b):=\bigcap_{a\in (I: b^\infty)}(I: a^\infty).$$

\begin{lemma}\label{lem:rad1}
If $I$ is a radical  ideal then $(I:b)$ and  $(I\hat+ b)$ are  radical  for any $b\in A$.  Further,
$$(I:b)=(I: b^\infty),$$
and $b \in (I\hat+b)$.
\end{lemma}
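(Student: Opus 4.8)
The plan is to first observe that $(I:b^\infty)$ is always an ideal (it is an increasing union $\bigcup_n (I:b^n)$ of ideals, hence an ideal even without the ascending chain condition), and that it is a Poisson ideal: if $ab^n \in I$ then applying $\{b,-\}$ and using that $I$ is Poisson gives $\{b,a\}b^n + n a\{b,b^{n-1}\}b = \{b,a\}b^n \in I$, so $\{b,a\} \in (I:b^\infty)$; for a general $c \in A$ one uses the Leibniz rule and the fact that $(I:b^\infty)$ is an ideal to conclude $\{c,a\}\in (I:b^\infty)$. The same computation, applied to $(I:b)$ directly, will be the key to the identity $(I:b) = (I:b^\infty)$ below.

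For the equality $(I:b) = (I:b^\infty)$: the inclusion $\subseteq$ is trivial, so suppose $ab^n \in I$ with $n$ minimal, $n \geq 2$. Then I would like to show $ab^{n-1}\in I$, which iterates down to $ab\in I$. Consider $(ab^{n-1})^2 = (ab^n)(ab^{n-2})$ for $n\ge 2$: since $ab^n\in I$ we get $(ab^{n-1})^2 \in I$, and as $I$ is radical, $ab^{n-1}\in I$. (For $n=2$ this reads $(ab)^2 = (ab^2)a \in I$, hence $ab\in I$.) Thus by downward induction $ab \in I$, giving $(I:b^\infty)\subseteq (I:b)$.

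Next, $(I:b)$ is radical: if $a^m \in (I:b)$, i.e. $a^m b \in I$, then $(ab)^m = a^m b^m \in I$ (since $a^mb\in I$ and $I$ is an ideal), so $ab\in I$ by radicality of $I$, i.e. $a \in (I:b)$. Since $(I\hat+ b) = \bigcap_{a\in(I:b^\infty)}(I:a^\infty)$ is an intersection of sets each of which equals $(I:a^\infty) = (I:a)$, and each $(I:a)$ is radical by what we just proved, the intersection $(I\hat+b)$ is radical as well (an intersection of radical ideals is radical). Finally, $b\in (I\hat+b)$: for every $a \in (I:b^\infty) = (I:b)$ we have $ab\in I$, which says precisely $b\in(I:a)=(I:a^\infty)$; intersecting over all such $a$ gives $b\in(I\hat+b)$.

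The only mild subtlety — the step I would flag as the one to get right rather than the "hard part" — is the descent $ab^n\in I \Rightarrow ab^{n-1}\in I$ using radicality; everything else is a direct unwinding of definitions together with the Leibniz rule for the Poisson-ideal claims. No ascending chain condition is used anywhere, which is the point of stating these facts for the non-noetherian ring $\S(W_+)$.
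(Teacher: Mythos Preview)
Your proof is correct and follows essentially the same path as the paper's. Two minor remarks: first, the Poisson-ideal discussion in your opening paragraph is not part of this lemma at all (that is the content of the later Lemma~\ref{lem:comp1}), and your assertion that ``the same computation \ldots\ will be the key to the identity $(I:b)=(I:b^\infty)$'' is misleading, since that identity uses only radicality and no Poisson structure. Second, your iterative descent $ab^n\in I\Rightarrow ab^{n-1}\in I\Rightarrow\cdots$ via $(ab^{n-1})^2=(ab^n)(ab^{n-2})$ is fine, but the paper reaches $ab\in I$ in a single step from $(ab)^n=a^{n-1}(ab^n)\in I$. The remaining parts---radicality of $(I:b)$, radicality of $(I\hat+b)$ as an intersection of radical ideals, and $b\in(I\hat+b)$---are argued identically to the paper.
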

\begin{proof}
First we show that $(I: b)=(I: b^\infty)$. It is clear that $(I: b)\subseteq (I: b^\infty)$. 
Thus it is enough to show that $(I: b^\infty)\subseteq (I: b)$. We fix $a\in (I: b^\infty)$ and $n\in \NN$ such that $ab^n\in I$. We  have that $(ab)^n=a^{n-1}ab^n\in I$. Hence $ab\in I$ and $a \in (I:b)$.

Next, we wish to show that $\sqrt{(I: b)}=(I:b)$. We fix $a\in \sqrt{(I: b)}$ and $n\in \NN$ such that $a^n\in (I: b)$. 
We have   $(ab)^n=a^nbb^{n-1}\in I$ and therefore $ab\in I$. Hence $a\in (I:b)$.

An intersection of a collection of radical ideals is clearly a radical ideal and thus if $I$ is radical so is $(I\hat + b)$.

For the final statement, if $a \in (I: b^\infty) = (I:b)$ then $ab \in I$ and $b \in (I:a^\infty)$.  
\end{proof}

If $b \in A$ then we define $A[b^{-1}] =  A[x]/(xb-1)$, where we denote $x$ by $b^{-1}$.
The kernel of the natural map $A \to A[b^{-1}] $ is $((0): b^\infty)$.
Likewise, the kernel of the natural map $A \to (A/I)[b^{-1}]$ is $(I: b^\infty)$.
We then have:

\begin{lemma}\label{Lredrad}
If $I$ is a radical ideal of $A$, then  $(A/I)[b^{-1}]$ is  reduced  for any $b\in A$ (i.e. $(A/I)[b^{-1}]$ has no nonzero nilpotents).\end{lemma}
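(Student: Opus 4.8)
The plan is to reduce the statement to the well-known fact that a localisation of a reduced ring is reduced. First I would recall that since $I$ is a radical ideal of $A$, the quotient $A/I$ is a reduced commutative ring. Now $(A/I)[b^{-1}]$ is, by the definition given just above the lemma, the localisation of $A/I$ at the multiplicative set $\{1, \bar b, \bar b^2, \dots\}$, where $\bar b$ denotes the image of $b$ in $A/I$. So it suffices to show that localising a reduced commutative ring preserves reducedness.

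For that step, suppose $\frac{a}{\bar b^n}$ is a nilpotent element of $(A/I)[b^{-1}]$, say $\left(\frac{a}{\bar b^n}\right)^m = 0$ for some $m \geq 1$. Then $\frac{a^m}{\bar b^{nm}} = 0$ in the localisation, which means there is some $k \in \NN$ with $\bar b^k a^m = 0$ in $A/I$, i.e. $(\bar b^k a)^m \cdot \bar b^{k(m-1)} \cdot (\text{suitable power adjustment}) $ — more cleanly, $(\bar b^{k} a)^{m} = \bar b^{k(m-1)} \cdot \bar b^{k} a^{m} = 0$ after multiplying by $\bar b^{k(m-1)}$, so $\bar b^k a$ is nilpotent in $A/I$, hence $\bar b^k a = 0$ since $A/I$ is reduced. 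But then $\frac{a}{\bar b^n} = \frac{\bar b^k a}{\bar b^{n+k}} = 0$ in $(A/I)[b^{-1}]$. Thus $(A/I)[b^{-1}]$ has no nonzero nilpotents.

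I expect no real obstacle here: the only thing to be slightly careful about is matching the presentation $A[x]/(xb-1)$ given in the text with the usual localisation $S^{-1}(A/I)$ at $S = \{\bar b^n\}$, and to note that the kernel of $A \to (A/I)[b^{-1}]$ being $(I : b^\infty)$ (as already stated in the excerpt) is consistent with this identification. Alternatively, one can argue directly with the presentation: an element of $(A/I)[b^{-1}]$ is represented by a polynomial in $b^{-1}$ with coefficients in $A/I$, clear denominators to reduce to the localisation picture, and proceed as above. Either way the argument is routine commutative algebra once the reducedness of $A/I$ is in hand.
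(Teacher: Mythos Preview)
Your proof is correct and follows essentially the same approach as the paper: reduce to $I=(0)$ (equivalently, work in the reduced ring $A/I$), take a nilpotent element $ab^{-n}$, and use reducedness to conclude that $a$ itself is annihilated by a power of $b$, so $ab^{-n}=0$. The only cosmetic difference is that the paper phrases the key step as ``$a^k\in((0):b^\infty)$, hence $a\in((0):b^\infty)$ by Lemma~\ref{lem:rad1}'', whereas you do the equivalent direct computation $(\bar b^{k}a)^m=\bar b^{k(m-1)}(\bar b^{k}a^m)=0$ to get $\bar b^{k}a=0$.
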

\begin{proof} 
It suffices to consider the case $I = (0)$.
If $(ab^{-n})^k = 0$ in $A[b^{-1}]$ then the natural map $A \to A[b^{-1}]$ sends $a^k \mapsto 0$ and so 
$a^k \in ((0): b^\infty)$. 
By Lemma~\ref{lem:rad1} we have $a \in ((0): b^\infty) $.
Thus $a b^{-n} =0$ in $A[b^{-1}]$.
\end{proof}
\begin{lemma} \label{lem:rad2}
We  have $(I\hat+ b)\cap (I:b)\subseteq\sqrt I$ and thus  $\sqrt I=\sqrt{(I\hat+ b)\cap (I:b)} = \sqrt{(I\hat+ b)} \cap \sqrt{(I:b)}$.
\end{lemma}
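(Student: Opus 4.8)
The statement asks to show $(I\hat+ b)\cap (I:b)\subseteq\sqrt I$, and then to deduce the chain of equalities $\sqrt I=\sqrt{(I\hat+ b)\cap (I:b)} = \sqrt{(I\hat+ b)} \cap \sqrt{(I:b)}$. I will reduce immediately to the case $I=\sqrt I$ radical — nothing is lost, since all the operations $(-:b)$, $(-\hat+\,b)$ and $\sqrt{\,\cdot\,}$ only depend on $\sqrt I$ (for the first two this is part of Lemma~\ref{lem:rad1}; for radicals it is clear). So assume $I$ is radical and write the goal as $(I\hat+ b)\cap (I:b)\subseteq I$.

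For the containment: suppose $a \in (I\hat+ b)\cap (I:b)$. The plan is to exploit the definition $(I\hat+ b)=\bigcap_{c\in (I: b^\infty)}(I: c^\infty)$ by choosing a cleverly adapted $c$. Note $b \in (I:b^\infty)$ trivially, so one gets $a \in (I:b^\infty)=(I:b)$ for free — but that is already assumed. The right choice is $c := b + a$ (or a similar combination): I would check that $c \in (I:b^\infty)$, using that $a \in (I:b)$ so that $ab \in I$ and hence $cb = b^2 + ab \equiv b^2 \pmod I$, which gives $cb^n \equiv b^{n+1}$, so no power of $b$ is killed by $c$ unless... hmm — actually the cleaner route is to observe $c = a+b$ satisfies $c b \in (b^2) + I$, and more to the point, I want $c$ to lie in $(I:b^\infty)$, i.e.\ $cb^n\in I$ for some $n$. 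That fails for $c=a+b$ in general. Let me instead take $c = a$: is $a\in (I:b^\infty)=(I:b)$? Yes, by hypothesis. Then $a\in(I\hat+b)\subseteq (I:a^\infty)=(I:a)$ by Lemma~\ref{lem:rad1}, so $a^2\in I$, and since $I$ is radical, $a\in I$. That is the whole argument: the element witnessing membership in the intersection defining $(I\hat+b)$ is $a$ itself.

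For the displayed equalities: $\sqrt I \subseteq \sqrt{(I\hat+b)}$ because $b\in (I\hat+b)$ is false in general — rather, I use that $I \subseteq (I\hat+b)$ (clear, as $I\subseteq(I:c^\infty)$ for every $c$) and $I\subseteq(I:b)$, hence $I \subseteq (I\hat+b)\cap(I:b)$, giving $\sqrt I\subseteq\sqrt{(I\hat+b)\cap(I:b)}$. The reverse inclusion $\sqrt{(I\hat+b)\cap(I:b)}\subseteq\sqrt{\sqrt I}=\sqrt I$ is exactly the containment just proved (after passing to $\sqrt I$ and taking radicals). Finally, $\sqrt{(I\hat+b)\cap(I:b)} = \sqrt{(I\hat+b)}\cap\sqrt{(I:b)}$ is the standard identity $\sqrt{J\cap K}=\sqrt J\cap\sqrt K$ for any ideals $J,K$ in a commutative ring, which I would just cite or dispatch in one line.

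The only genuinely delicate point — and the one I'd flag as the "main obstacle" — is making sure the reduction to $I$ radical is legitimate, i.e.\ that $(\sqrt I \,\hat+\, b) = \sqrt{(I\hat+b)}$ (or at least that they have the same radical) so that proving the statement for radical $I$ really yields it for all $I$; this needs a short check that $(I:b^\infty)$ and the ideals $(I:c^\infty)$ behave well under passing to $\sqrt I$, which follows from Lemma~\ref{lem:rad1} and the fact that $(\sqrt I : c^\infty)$ is radical and contains $(I:c^\infty)$. Everything else is a one-line manipulation using that $I$ is radical and the identity $\sqrt{J\cap K}=\sqrt J\cap \sqrt K$.
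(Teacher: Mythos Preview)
Your core argument is exactly the paper's: pick $c=a$ in the intersection defining $(I\hat+ b)$, observe $a\in (I:b)\subseteq (I:b^\infty)$, conclude $a\in (I:a^\infty)$, hence some power of $a$ lies in $I$, so $a\in\sqrt I$. The second part (the displayed equalities) is also handled the same way, via $I\subseteq (I\hat+b)\cap(I:b)$ and $\sqrt{J\cap K}=\sqrt J\cap\sqrt K$.

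The one unnecessary complication is your reduction to radical $I$. The paper does not do this, and you do not need it either: the two places where you invoke radicality --- writing $(I:b^\infty)=(I:b)$ and $(I:a^\infty)=(I:a)$ --- can simply be dropped. You only need the trivial inclusion $(I:b)\subseteq (I:b^\infty)$, and then $a\in (I:a^\infty)$ already means $a^{n+1}\in I$ for some $n$, i.e.\ $a\in\sqrt I$. This eliminates precisely what you flagged as the ``main obstacle'' (whether $(\sqrt I\,\hat+\,b)$ and $(I\hat+b)$ agree up to radicals); that check is not needed at all.
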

\begin{proof} 
Let $a\in (I\hat+ b)\cap (I:b)$. Then $a\in (I:b^\infty)$ and therefore from the definition of $(I \hat+ b)$ we have $a\in (I:a^\infty)$. Hence $a\in\sqrt I$.
The final statement holds since $$I \subseteq (I\hat+b) \cap (I:b).$$
\end{proof}

Although the Lasker-Noether   primary decomposition theorem does not hold if $A$ is not noetherian, Lemma~\ref{lem:rad2} can provide a useful analogue.  

\subsection{Compatibility with Poisson structure}
We now show that the constructions above preserve the Poisson structure of $A$.
\begin{lemma}\label{lem:comp1}
 If $I$ is a Poisson ideal  of A then so is $(I: b^{\infty})$.\end{lemma}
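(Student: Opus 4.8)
The plan is to separate the two things that need checking: that $(I:b^\infty)$ is an ideal of $A$, and that it is closed under the Poisson bracket. The first is routine and does not use that $I$ is Poisson: if $ab^n\in I$ and $a'b^{n'}\in I$ then $(a+a')b^{\max(n,n')}\in I$, and $(ca)b^n=c(ab^n)\in I$ for any $c\in A$. So the real content is to show that $\{c,a\}\in (I:b^\infty)$ whenever $c\in A$ and $a\in (I:b^\infty)$.

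Fix such $a$ and $c$ and choose $n\in\NN$ with $ab^n\in I$. First I would apply the Leibniz rule, in the form of its consequence $\{c,b^n\}=nb^{n-1}\{c,b\}$, to expand
\[\{c,ab^n\}=\{c,a\}\,b^n+a\,n b^{n-1}\{c,b\}.\]
Since $I$ is a Poisson ideal and $ab^n\in I$, the left-hand side lies in $I$, so
\[\{c,a\}\,b^n=\{c,ab^n\}-nb^{n-1}\{c,b\}\,a\]
has its first summand in $I$. The obstacle is the second summand $nb^{n-1}\{c,b\}a$, which is not visibly in $I$. I would clear it by multiplying the whole identity by one further copy of $b$: then $nb^{n-1}\{c,b\}a\cdot b=n\{c,b\}(ab^n)\in I$ because $ab^n\in I$ and $I$ is an ideal, so
\[\{c,a\}\,b^{n+1}=\{c,ab^n\}\,b-n\{c,b\}(ab^n)\in I,\]
and hence $\{c,a\}\in (I:b^\infty)$ by definition.

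The only subtlety, and the step I expect to be the main (mild) obstacle, is precisely that the naive Leibniz expansion leaves a boundary term $anb^{n-1}\{c,b\}$ with no evident reason to lie in $I$; the resolution is to accept one extra power of $b$, which costs nothing since membership in $(I:b^\infty)$ only requires annihilation by \emph{some} power of $b$. Everything else is bookkeeping with the ideal axioms and the derivation property of $\{c,-\}$.
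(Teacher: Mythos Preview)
Your proof is correct and follows essentially the same approach as the paper. The only difference is cosmetic: the paper applies the Leibniz rule directly to $\{ab^{n+1},c\}$ (noting $ab^{n+1}\in I$ since $ab^n\in I$), so that both terms $\{ab^{n+1},c\}$ and $(n+1)ab^n\{b,c\}$ are visibly in $I$ without the extra step of multiplying through by $b$ at the end.
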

\begin{proof} Fix $a\in A$ and $n\in\NN$ such that $ab^n\in I$. It is enough to show that for any $c\in A$ we have $\{a, c\}\in (I: b^{\infty})$. We have
$$\{ab^{n+1}, c\}=\{a, c\}b^{n+1}+(n+1)ab^n\{b,c\}.$$ 
The terms $\{ab^{n+1}, c\}$ and $(n+1)ab^n\{b,c\}$ belong to $I$ and thus $\{a, c\}b^{n+1}\in I$.
\end{proof}

We immediately obtain:
\begin{corollary}\label{cor:comp2}
If $I$ is a Poisson ideal then the algebra $(A/I)[b^{-1}]$ is Poisson with respect to the Poisson bracket defined as follows:
\[ \{a_1 b^{-n_1}, a_2 b^{-n_2} \} = \left( \{a_1, a_2\} b - n_2  \{a_1, b\} a_2 - n_1 \{b, a_2\} a_1 \right) b^{-n_1-n_2 -1}.\]
The natural maps $A\to A/I\to (A/I)[b^{-1}]$ are morphisms of Poisson algebras.
\qed
\end{corollary}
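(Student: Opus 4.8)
The statement to prove is Corollary~\ref{cor:comp2}: if $I$ is a Poisson ideal of a Poisson algebra $A$, then $(A/I)[b^{-1}]$ carries a Poisson bracket extending that of $A/I$, given by the displayed formula, and the localization maps are Poisson morphisms.

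The plan is to reduce immediately to the case $I = (0)$: since $I$ is a Poisson ideal, $A/I$ is a Poisson algebra, and $(A/I)[b^{-1}] = (A/I)[\bar b^{-1}]$, so it suffices to construct a Poisson bracket on $A[b^{-1}]$ agreeing with the one on $A$ and to check the formula. The key point that makes this work is Lemma~\ref{lem:comp1}: the kernel of the localization map $A \to A[b^{-1}]$ is exactly $((0):b^\infty)$, which by Lemma~\ref{lem:comp1} is a Poisson ideal. So the bracket descends to the image of $A$ in $A[b^{-1}]$; the real content is extending it over the new element $b^{-1}$.

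The cleanest route is the universal-property / derivation argument. First I would observe that localization is an étale (or at least formally étale) ring map, so Kähler differentials satisfy $\Omega_{A[b^{-1}]} = A[b^{-1}] \otimes_A \Omega_A$, equivalently every $A$-derivation $A \to M$ into an $A[b^{-1}]$-module extends uniquely to an $A[b^{-1}]$-derivation $A[b^{-1}] \to M$. The Poisson bracket on $A$ is a biderivation; for each fixed $a \in A$ the map $\{a, -\}$ is a derivation of $A$, hence extends uniquely to a derivation of $A[b^{-1}]$; one computes, by differentiating $b \cdot b^{-1} = 1$, that it must send $b^{-n} \mapsto -n\{a,b\} b^{-n-1}$. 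Then, fixing the second slot, the resulting map $\{-, a_2 b^{-n_2}\}$ is again seen to be a derivation of $A$ (using that $\{a, b^{-n_2}\}$ behaves correctly under the Leibniz rule), so it too extends uniquely to $A[b^{-1}]$, yielding a well-defined biderivation on $A[b^{-1}]$ whose formula on products of the given generators is precisely the displayed one. Alternatively, and perhaps more in keeping with the elementary style of this section, one can simply \emph{define} the bracket by the displayed formula on elements $a b^{-n}$, check it is well-defined (independent of the representation $ab^{-n} = ab' b^{-n-1}$, etc.), bilinear, antisymmetric, and satisfies Leibniz, and then verify the Jacobi identity.

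The main obstacle is the Jacobi identity: antisymmetry and the Leibniz rule are short formal checks from the displayed formula, and well-definedness is bookkeeping, but Jacobi for the localized bracket is not visibly inherited. The slick way around this is to note that the Jacobi identity $\{\{a_1,a_2\},a_3\} + \text{(cyclic)} = 0$ is a polynomial identity in $a_1, a_2, a_3$ that, once known to hold on a set of algebra generators of a Poisson algebra, holds everywhere (the "Jacobiator" is a triderivation in each argument, so it vanishes on all of $A[b^{-1}]$ as soon as it vanishes on the generators $A \cup \{b^{-1}\}$); on $A$ it holds by hypothesis, and on triples involving $b^{-1}$ it follows from the $A$-case by the derivation-extension uniqueness (equivalently, by a direct but routine expansion using $b b^{-1} = 1$). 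Thus the derivation/triderivation viewpoint is really doing the work, and I would present the proof in that language, relegating the explicit verification of the displayed formula to a one-line computation. Finally, that the maps $A \to A/I \to (A/I)[b^{-1}]$ are Poisson morphisms is immediate: the first because $I$ is Poisson, the second because the localized bracket was constructed to restrict to the original one on the image of $A/I$.
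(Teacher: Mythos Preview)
Your argument is correct, but note that the paper does not actually give a proof of this corollary: it is stated with a bare \texttt{\textbackslash qed} and followed only by the remark that it is a special case of the general fact that localizations of Poisson algebras at multiplicatively closed sets inherit a compatible Poisson structure. In other words, the authors treat the result as standard and do not supply the details you have sketched.

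Your write-up is thus more than the paper provides. The reduction to $I=(0)$ is the right first move, and your use of Lemma~\ref{lem:comp1} to handle the kernel of $A\to A[b^{-1}]$ is exactly what that lemma is there for. The derivation-extension argument (extend $\{a,-\}$ uniquely over $b^{-1}$ via $b\cdot b^{-1}=1$, then iterate in the other slot) is the standard and cleanest way to get well-definedness and the Leibniz rule simultaneously, and your treatment of Jacobi via the observation that the Jacobiator is a derivation in each slot is the efficient route: since $b$ is a unit in $A[b^{-1}]$, the relation $0=J(x,y,b\cdot b^{-1})=b\,J(x,y,b^{-1})+b^{-1}J(x,y,b)$ forces $J(x,y,b^{-1})=0$ once $J$ vanishes on $A$, and one bootstraps slot by slot. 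If you present it this way, the explicit formula in the statement drops out from a one-line computation, as you say.
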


Corollary~\ref{cor:comp2} is a special case of a more general result:  that if $A$ is a Poisson algebra and $\mc{C}$ is a multiplicatively closed set in $A$ then $A \mc{C}^{-1}$ has a natural Poisson structure compatible with that on $A$.  

Let $P$ be a minimal prime of the commutative algebra $A$, let $\mc{C} = A \ssm P$, and let 
\[ Q = \{ x\in A \st xc =0 \mbox{ for some $c \in \mc{C}$ } \}\]
be the kernel of the natural map $A \to A \mc{C}^{-1}$.
If $xyc = 0$ where $y, c \in \mc{C}$, then $yc \in \mc{C}$ and so $x \in Q$.
Thus if $xy \in Q$ and $y \not \in P$, then $x\in Q$.  However, even if $A$ is a quotient of $\S(W_+)$, we do not know if   $Q$ must be primary.
Note that if $A$ is in addition noetherian, then $P \mc{C}^{-1}$ is the unique minimal prime of the noetherian ring $A \mc{C}^{-1}$ and so is nilpotent.  Thus if $x\in P$, we have some $x^n \in Q$ and $Q$ is in addition $P$-primary.  
%

\begin{lemma}[{see also~\cite[Lemma 1.8]{DiffAlg}}]\label{Lprad} 
If $I$ is Poisson then $\sqrt I$ is Poisson.
\end{lemma}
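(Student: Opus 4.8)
The plan is to show that $\sqrt I$ is closed under taking Poisson brackets with any element, i.e. that $\{a, c\} \in \sqrt I$ whenever $a \in \sqrt I$ and $c \in A$. The standard trick is to prove by induction on $n$ that if $a^n \in I$ then $\{a, c\}^{2n-1} \in I$ (or some similar polynomial bound); the key algebraic identity is that $\{a^n, c\} = n a^{n-1} \{a, c\}$, which lies in $I$ since $I$ is Poisson. So $a^{n-1}\{a,c\} \in I$, and one then bootstraps: multiplying by suitable powers of $a$ and $\{a,c\}$ and re-applying the Leibniz rule for the bracket lets one trade factors of $a$ for factors of $\{a,c\}$.

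More concretely, first I would reduce to $I$ radical using Lemma~\ref{lem:comp1}-style reasoning: actually the cleanest route is to pass to the localization. Fix $a \in \sqrt I$ and $c \in A$; I want $\{a,c\} \in \sqrt I$. It suffices to show $\{a,c\}$ lies in every prime $P \supseteq I$. Hmm — but primes need not be Poisson, so that is exactly the subtlety. Instead I would argue directly in $A$: set $b = \{a, c\}$ and work to show $b \in \sqrt I$. From $\{a^n, c\} = n a^{n-1} b \in I$ we get $a^{n-1} b \in I$. Now apply $\{-, c\}$ again, or better, compute $\{a^{n-1} b, c\}$ and also multiply the relation $a^{n-1}b \in I$ by $b$ and iterate. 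A clean bookkeeping: prove by downward induction on $k$ from $k = n-1$ to $k = 0$ that $a^k b^{\,2(n-1-k)+1} \in I$; the base case $k = n-1$ is $a^{n-1} b \in I$, and the inductive step uses $\{a^k b^{m}, c\} = k a^{k-1} b^m \{a,c\} + m a^k b^{m-1}\{b,c\} = k a^{k-1} b^{m+1} + m a^k b^{m-1}\{b,c\}$, together with multiplying the previous relation by $\{b,c\}$, to extract $a^{k-1} b^{m+1} \in I$ (after again multiplying by an extra $b$ to absorb the $a^k b^{m-1}\{b,c\}$ term, which already lies in $I$ by the inductive hypothesis times $\{b,c\}$... wait, that term has $a^k$, not $a^{k-1}$, so it is covered by the current hypothesis). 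At $k = 0$ this yields $b^{2n-1} \in I$, hence $b \in \sqrt I$.

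The main obstacle I anticipate is getting the induction bookkeeping exactly right: one must be careful that at each stage the auxiliary term produced by the Leibniz rule (the $\{b,c\}$ term) is already known to lie in $I$ from an earlier step or the current hypothesis, so that isolating the desired monomial is legitimate. A secondary concern is characteristic: the identity $\{a^n, c\} = n a^{n-1}\{a,c\}$ requires $n$ to be invertible, which is fine since $\kk$ has characteristic zero (as assumed throughout), so dividing by $n$, $k$, $m$ is harmless. No noetherian hypothesis is needed anywhere, so this works verbatim for $A = \S(W_+)$ or any quotient. I would also remark that this is essentially the Poisson analogue of the classical fact that the radical of a differential ideal is differential, which is why the excerpt cites \cite[Lemma 1.8]{DiffAlg}; if a shorter write-up is desired one can simply invoke the polynomial-identity argument in the form "$\{a,c\}^{2n-1}$ is an $A$-linear combination of $\{a^i, \text{(iterated brackets)}\}$ each lying in $I$", but spelling out the induction is the safest path.
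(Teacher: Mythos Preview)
Your argument is correct. The downward induction on $k$ showing $a^{k}\{a,c\}^{\,2n-2k-1}\in I$ works exactly as you outline: from $a^{k}b^{m}\in I$ (with $b=\{a,c\}$, $m=2n-2k-1$) apply $\{-,c\}$, multiply the resulting relation by one extra factor of $b$ so that the unwanted term becomes $m\,a^{k}b^{m}\{b,c\}\in I$, and conclude $a^{k-1}b^{m+2}\in I$. At $k=0$ this gives $\{a,c\}^{2n-1}\in I$. This is the classical Seidenberg--Kaplansky argument for differential ideals, transported verbatim to the Poisson setting.

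The paper takes a different route that hides the bookkeeping. It passes to the saturation $(I:\{a,b\}^{\infty})$, which is Poisson by Lemma~\ref{lem:comp1}, and argues by minimality: if $m$ is least with $a^{m}\in(I:\{a,b\}^{\infty})$, then $\{a^{m},b\}=m\,a^{m-1}\{a,b\}$ lies in this Poisson ideal, whence $a^{m-1}\in(I:\{a,b\}^{\infty})$, contradicting minimality unless $m=0$. Underneath, this is the same trade of powers of $a$ for powers of $\{a,b\}$, but the saturation ideal absorbs your ``multiply by an extra $b$'' step into its definition. The paper's version is shorter and leverages the earlier lemma; yours is self-contained and yields the explicit exponent $2n-1$, which the paper's proof does not.
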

\begin{proof}We fix $a, b\in A$ and $n\in\NN$ such that $a^n\in I$. It is enough to show that $\{a, b\}\in \sqrt I$. We will prove that $1\in (I: \{a, b\}^{\infty})$ (this statement is equivalent to the previous one). Assume to the contrary that $1\not\in(I: \{a, b\}^{\infty})$.

We have that $a^n\in I\subseteq (I:\{a, b\}^\infty)$. Let $m$ be the minimal nonnegative integer such that $a^m\in (I:\{a, b\}^\infty)$. 
Since  $1\not\in (I:\{a, b\}^\infty)$, thus $m\ge1$. 
 As by Lemma~\ref{lem:comp1} $(I: \{a,b\}^\infty)$ is Poisson, 
$$ma^{m-1}\{a, b\}=\{a^m, b\}\in (I:\{a, b\}^\infty).$$
Therefore $a^{m-1}\in(I:\{a, b\}^\infty)$. This contradicts our assumption on the minimality of $m$.
\end{proof}

We thus obtain: 
\begin{corollary}\label{Cdec}
If $I$ is a radical Poisson ideal then for any $b\in A$ both $(I: b)$ and $(I\hat + b)$ are radical Poisson ideals and $I=(I:b)\cap (I\hat+b)$.\end{corollary}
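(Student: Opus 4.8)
The plan is to assemble Corollary~\ref{Cdec} directly from the structural lemmas just proved, since each assertion in it is a one-line consequence of one of them. First I would dispatch $(I:b)$: because $I$ is radical, Lemma~\ref{lem:rad1} gives $(I:b)=(I:b^\infty)$, and the right-hand side is Poisson by Lemma~\ref{lem:comp1}; Lemma~\ref{lem:rad1} also records that $(I:b)$ is radical. Hence $(I:b)$ is a radical Poisson ideal.

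Next I would treat $(I\hat+b)=\bigcap_{a\in(I:b^\infty)}(I:a^\infty)$ termwise. For each relevant $a$, applying Lemma~\ref{lem:rad1} with $a$ in place of $b$ shows $(I:a^\infty)=(I:a)$ is radical, and Lemma~\ref{lem:comp1} shows it is Poisson. An arbitrary intersection of radical ideals is radical, and an arbitrary intersection of Poisson ideals is a Poisson ideal, since the bracket condition $\{a,c\}\in J$ for all $c$ is preserved under intersection. Therefore $(I\hat+b)$ is a radical Poisson ideal.

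For the decomposition $I=(I:b)\cap(I\hat+b)$: the inclusion $I\subseteq(I:b)\cap(I\hat+b)$ is immediate, since $I\subseteq(I:b)$ by definition and $I\subseteq(I:a^\infty)$ for every $a$, so $I$ lies in the intersection defining $(I\hat+b)$. The reverse inclusion is exactly Lemma~\ref{lem:rad2}, which gives $(I\hat+b)\cap(I:b)\subseteq\sqrt I$, together with $\sqrt I=I$ because $I$ is radical. Combining the two inclusions finishes the proof.

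There is no substantive obstacle; the work has all been done in Lemmas~\ref{lem:rad1}, \ref{lem:comp1}, and \ref{lem:rad2}. The only points deserving a moment's care are that radicality of each factor $(I:a^\infty)$ in the definition of $(I\hat+b)$ genuinely follows from the hypotheses (via the identity $(I:a^\infty)=(I:a)$ of Lemma~\ref{lem:rad1}, which uses that $I$ is radical) and that the Poisson-ideal property survives the possibly infinite intersection — both of which are routine.
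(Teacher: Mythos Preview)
Your proposal is correct and follows essentially the same route as the paper, which simply writes ``Combine Lemmas~\ref{lem:rad1}, \ref{lem:rad2}, and \ref{lem:comp1}.'' You have merely unpacked what that combination means; the one redundancy is that Lemma~\ref{lem:rad1} already asserts $(I\hat+b)$ is radical (its proof uses exactly your intersection argument), so you need not re-derive this termwise.
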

\begin{proof}
Combine Lemmas~\ref{lem:rad1}, \ref{lem:rad2}, and \ref{lem:comp1}.
\end{proof}

It is well known \cite[Corollary~2.12]{Eisenbud} that any radical ideal $I$ of $A$ is an intersection of prime ideals and thus of primes minimal over $I$ --- this follows from Zorn's Lemma and does not require $A$ to be noetherian.
If $I$ has finitely many minimal primes $\mf{p}_1, \dots, \mf{p}_m$ then $\sqrt I = \mf{p}_1 \cap \dots \cap \mf{p}_m$.  
Conversely, if $\sqrt I = \mf{p}_1 \cap \dots \cap \mf{p}_m$ is an irredundant intersection then the $\mf{p}_j$ are precisely the minimal primes of $I$, as if $I \subseteq \mf{q}$ for some prime $\mf{q}$ then some $\mf{p}_j \subseteq \mf{q}$.

\begin{lemma}\label{lem:minprime}
Let $I$ be a  Poisson ideal of $A$. The minimal primes of $I$ are Poisson ideals. 
\end{lemma}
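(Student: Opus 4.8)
The plan is to reduce to the radical case and then to realise each minimal prime as the kernel of a localisation map.  First, by Lemma~\ref{Lprad} the ideal $\sqrt I$ is again Poisson, and since a prime of $A$ contains $I$ if and only if it contains $\sqrt I$, the minimal primes of $I$ and of $\sqrt I$ coincide; so we may assume $I$ is radical.  Since the preimage of a Poisson ideal under a morphism of Poisson algebras is again a Poisson ideal, and $A \to A/I$ is such a morphism, it suffices to show that each minimal prime of the reduced Poisson algebra $A/I$ is a Poisson ideal.  Replacing $A$ by $A/I$, we are reduced to the case $I = 0$: thus $A$ is reduced and $\mf{p}$ is a minimal prime of $A$.

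Now set $\mc{C} = A \ssm \mf{p}$ and consider the localisation $A\mc{C}^{-1}$, which carries a Poisson structure compatible with that of $A$ and for which $A \to A\mc{C}^{-1}$ is a morphism of Poisson algebras (the general version of Corollary~\ref{cor:comp2} noted just after it).  I claim its kernel is exactly $\mf{p}$, which finishes the proof.  The kernel is $\{ x \in A \st xc = 0 \text{ for some } c \in \mc{C}\}$, and this is contained in $\mf{p}$ because $\mf{p}$ is prime and disjoint from $\mc{C}$.  For the reverse inclusion, $A\mc{C}^{-1}$ is the local ring of $A$ at the minimal prime $\mf{p}$, so its unique prime ideal $\mf{p}\, A\mc{C}^{-1}$ is its nilradical; but $A\mc{C}^{-1}$ is a localisation of the reduced ring $A$, hence reduced, so $\mf{p}\, A\mc{C}^{-1} = 0$.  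Therefore every element of $\mf{p}$ is killed in $A\mc{C}^{-1}$, giving $\mf{p} \subseteq \ker$.  Hence $\mf{p}$ is the kernel of a morphism of Poisson algebras, so it is Poisson, and tracing back through the reductions, every minimal prime of the original Poisson ideal $I$ is Poisson.

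If one prefers to avoid localisation, the same argument can be phrased directly: after the reductions one shows $\mf{p} = \bigcup_{b \notin \mf{p}} ((0):b)$, where $\supseteq$ is immediate and $\subseteq$ is precisely the statement that a minimal prime of a reduced ring is annihilated locally; this union is directed because $((0):b_1) + ((0):b_2) \subseteq ((0):b_1b_2)$, each $((0):b)$ is Poisson by Lemma~\ref{lem:comp1}, and a directed union of Poisson ideals is Poisson.  Either way, the only non-formal ingredient is the standard commutative-algebra fact that the local ring of a reduced ring at a minimal prime is a field.  This is what replaces the prime-avoidance argument available in the noetherian setting: since $I$ may have infinitely many minimal primes, we cannot separate $\mf{p}$ from the others by a single element, but localising at $\mf{p}$ sidesteps the issue.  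I therefore do not anticipate a genuine obstacle; the only point requiring care is to verify at each step that the ring maps in play are genuinely morphisms of Poisson algebras, which Corollary~\ref{cor:comp2} and the remark following it supply.
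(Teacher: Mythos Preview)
Your proof is correct and takes a genuinely different route from the paper's.  The paper argues as follows: after reducing to $I$ radical, it introduces the ``Poisson core'' $I_{\mf p}$, the largest Poisson ideal contained in $\mf p$, and shows $I_{\mf p}$ is prime by applying the decomposition of Corollary~\ref{Cdec}, namely $I_{\mf p}=(I_{\mf p}:x)\cap(I_{\mf p}\hat{+}x)$, together with primality of $\mf p$ to force one factor inside $\mf p$ (hence inside $I_{\mf p}$).  Minimality of $\mf p$ over $I\subseteq I_{\mf p}$ then gives $I_{\mf p}=\mf p$.

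Your argument instead identifies $\mf p$ directly as the kernel of the Poisson morphism $A\to A_{\mf p}$, using that the localisation of a reduced ring at a minimal prime is a field.  This is cleaner and more conceptual, and bypasses the $(I\hat{+}b)$ machinery entirely; the trade-off is that it invokes the Poisson structure on a general localisation $A\mc C^{-1}$, which the paper only states (in the remark after Corollary~\ref{cor:comp2}) rather than proves, whereas the paper's own proof stays strictly within the single-element localisation results it has already established.  Your alternative phrasing, $\mf p=\bigcup_{b\notin\mf p}((0):b)$ as a directed union of Poisson ideals, is a nice way to make the argument self-contained: since $(0)$ is radical, $((0):b)=((0):b^\infty)$ by Lemma~\ref{lem:rad1}, which is Poisson by Lemma~\ref{lem:comp1}, and this version needs nothing beyond what the paper proves.
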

\begin{proof} 
Without loss of generality $I = \sqrt I$ is radical.
Let $\mf{p}$ be a minimal prime over $I$. Let $I_{\mf{p}}$ be the sum of all Poisson ideals contained in $\mf p$. 
Clearly $I_{\mf p} $ is  the maximal Poisson ideal contained in $\mf{p}$. To complete the proof it is enough to show that $I_{\mf{p}}$ is prime.

Certainly $\sqrt {I_{\mf{p}}}\subseteq \mf{p}$.  Since $\sqrt I_{\mf{p}}$ is Poisson by Lemma~\ref{Lprad}, $I_{\mf{p}}$ is a radical ideal. Let $x, y\in A$ be such that $xy\in I_{\mf{p}}$. 
We will show that either $x\in I_{\mf{p}}$ or $y\in I_{\mf{p}}$. 
By definition, $y\in (I:x)$, and by Lemma~\ref{lem:rad1},  $x\in (I\hat+ x)$. 
By Corollary~\ref{Cdec}, $I=(I:x)\cap (I\hat+ x)$, and both $(I:x)$ and $ (I\hat+ x)$ are Poisson ideals. Since  $I\subseteq \mf{p}$, either $(I:x)\subseteq \mf{p}$ or $(I\hat+ x)\subseteq \mf{p}$. 
Thus  either \begin{center}$y\in (I:x)\subseteq I_{\mf p}$ or $x\in(I\hat+x) \subseteq I_{\mf p}$.\end{center}
\end{proof}

\subsection{Radical ideals in $\S(W_+)$}\label{SWRADICAL}

The {\em positive Witt algebra}  is the Lie algebra $W_+$ with basis $e_i~(i\in\ZZ_{\geq 1})$ and Lie bracket $[e_i, e_j]=(j-i) e_{i+j}$. 
The symmetric algebra of $W_+$ is denoted by $\S(W_+)$.
Our convention is that the image of $e_i $ in $  {\S}(W_+)$ is denoted by $x_i$. 

We now specialise to studying the Poisson structure on $\S(W_+)$ induced by the Lie bracket on $W_+$.
In this section, we will show that $\S(W_+)$ satisfies the ascending chain condition on radical Poisson ideals and that proper quotients by Poisson ideals have finite Gelfand-Kirillov dimension. 
Our first step is to show that any nontrivial quotient of $\S(W_+)$ by a radical ideal embeds into a finitely generated Poisson algebra.

As with any symmetric algebra, $\S(W_+)$ carries a natural grading, which we refer to the {\em order gradation} and denote by $o$.  
We have $o(x_i) = 1$ for all $i$, and $o(\{x_i, f\}) \leq o(f)$ for all $i$ and  for all $f\in \S(W_+)$.
On $\U(W_+)$, there is an order {\em filtration}, which we also denote by $o$, with $o(e_i) =1 $ for all $i$.
Recall that $\S(W_+) = \gr_o \U(W_+)$ is the associated graded ring of the order filtration on $\U(W_+)$.

In addition, $W_+$ is a graded Lie algebra if we give $e_i$ degree $i$, and this extends to  a graded structures on $\U(W_+)$ and $\S(W_+)$, which we refer to as the {\em degree gradation}.  
We denote the degree gradation by $d$, with  $d(e_i) = d(x_i) = i$.

\begin{lemma}\label{Lns} Let $I$ be a  nontrivial 
radical Poisson ideal of ${\S}(W_+)$.
\begin{itemize}
\item[(a)]  There exists a finitely generated reduced commutative algebra $A$ such that
 there is an embedding \[{\S}(W_+)/I \hra A.\]
\item[(b)] If $I$ is prime, then there exists a finitely generated subalgebra $B$ of $\S(W_+)/I$ and $p \in B$ so that $\S(W_+)/I \subseteq B[p^{-1}]$.
\item[(c)] The algebras $A$ and $B[p^{-1}]$ in parts (a) and (b) carry natural Poisson structures compatible with that of $\S(W_+)/I$.
\end{itemize}
\end{lemma}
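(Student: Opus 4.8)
The plan is to find a single nonzero element of $\S(W_+)/I$ whose inversion produces a finitely generated algebra, and then to "clean up" using the constructions of the previous subsections. Concretely, I would argue as follows.

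\textbf{Step 1: reduce to the prime case, and find a useful element.} By Lemma~\ref{lem:minprime} the minimal primes of $I$ are Poisson, and if part~(b) is established for each of them then $\S(W_+)/I$ embeds into the finite product $\prod_j \S(W_+)/\mf p_j$ (using that $I$ has finitely many minimal primes --- though note this finiteness is itself Theorem~\ref{ithm1}, so to avoid circularity I would instead prove (a) directly for radical $I$ by embedding $\S(W_+)/I$ into an arbitrary, possibly infinite, product of the $B_{\mf p}[p_{\mf p}^{-1}]$ and then cutting down). So the heart of the matter is part~(b): $I = \mf p$ prime, $I \neq 0$. Pick any nonzero $f \in \mf p$ and, using the degree gradation $d$, replace $f$ by a nonzero homogeneous component lying in $\mf p$ --- here one uses that $\mf p$, being Poisson and prime, is a graded ideal. (If $\mf p$ is not automatically graded, I would instead run the argument below with the ideal generated by a single homogeneous Poisson-ideal element obtained by a standard bracket-with-$x_1$ degree-lowering trick, since $\{x_1, -\}$ shifts the $d$-grading by $1$ and the $d$-homogeneous components of any Poisson ideal element can be extracted.)

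\textbf{Step 2: invert a variable to kill all but finitely many $x_i$.} The key observation is that modulo $\mf p$ some relation holds among the $x_i$, and after inverting one well-chosen element $p$ this relation lets us express all but finitely many of the $x_i$ in terms of the remaining finitely many. Precisely: let $n$ be minimal such that $x_n \notin \mf p$ is not needed... more usefully, take the nonzero homogeneous $g \in \mf p$ of smallest degree $d(g) = N$; write $g = x_N \cdot(\text{unit-ish factor}) + (\text{terms in } x_1,\dots,x_{N-1})$ is too optimistic, so instead I would bracket $g$ repeatedly with $x_1$: since $\{x_1, x_i\} = (i-1)x_{i+1}$, iterating $\{x_1, -\}$ on $g$ and taking suitable $\kk$-linear combinations produces, inside $\mf p$, an element of the form $x_m - (\text{polynomial in } x_1, \dots, x_{m-1})$ for some $m$, provided we first invert the leading coefficient. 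Localising $\S(W_+)/\mf p$ at that leading coefficient $p$ then gives $x_m \in \kk[x_1,\dots,x_{m-1}][p^{-1}]$ inside $(\S(W_+)/\mf p)[p^{-1}]$, and then bracketing this relation with $x_1$ again and inducting shows $x_{m+1}, x_{m+2}, \dots$ all lie in the subalgebra generated by $x_1, \dots, x_{m-1}$ and $p^{-1}$. Hence with $B := \kk[\,\overline{x_1}, \dots, \overline{x_{m-1}}, \overline p\,] \subseteq \S(W_+)/\mf p$ we get $\S(W_+)/\mf p \subseteq B[p^{-1}]$, which is finitely generated; this proves (b), and then (a) follows by taking $A = B[p^{-1}]$ with its reduced structure (reducedness from Lemma~\ref{Lredrad}).

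\textbf{Step 3: Poisson compatibility (part (c)).} This is immediate from Corollary~\ref{cor:comp2} and the remark following it: localisation of a Poisson algebra at a multiplicatively closed set carries a canonical compatible Poisson structure, so $B[p^{-1}] = (\S(W_+)/\mf p)[p^{-1}]$ is Poisson with the natural map a Poisson morphism, and for (a) the algebra $A$ is either such a localisation or a finite product/subalgebra of such, hence inherits a compatible Poisson bracket.

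\textbf{Main obstacle.} The delicate point is Step~2: producing from an arbitrary nonzero element of $\mf p$ an honest "solve for $x_m$" relation valid after inverting a single element, and then propagating it to all higher $x_i$. One must choose the localising element $p$ so that (i) the leading term of the chosen relation becomes a unit, and (ii) the same $p$ works uniformly for the inductive step $x_m, x_{m+1}, x_{m+2}, \dots$ --- the bracket $\{x_1, x_m - P(x_1,\dots,x_{m-1})\} = (m-1)x_{m+1} - \{x_1, P\}$ expresses $x_{m+1}$ over the previous variables \emph{without} needing to enlarge $p$, since no new denominators appear, so the induction does close with a single $p$. Verifying that the extraction in (i) can be done, and that no degenerate case (e.g.\ $\mf p$ containing only elements whose relevant coefficients are themselves in $\mf p$) obstructs it, is where the real work lies; the grading by $d$ is the tool that makes the bookkeeping finite.
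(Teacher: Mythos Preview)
Your Step 2 has the right core idea---bracket with $x_1$ to express higher $x_i$'s in terms of lower ones after a single localisation---and this is indeed how the paper argues in the prime case. The paper makes it precise by taking $f \in I$ of minimal \emph{order} (polynomial degree in the $x_i$), not minimal $d$-degree, and choosing $n$ minimal with $f \in \kk[x_1,\dots,x_n]$; then $\{x_1,f\} = x_{n+1}p + q$ with $o(p) < o(f)$, so $p \neq 0$ and (by minimality of order) $p \notin I$. Your detour through $d$-homogeneity and the claim that Poisson primes are $d$-graded is both unjustified and unnecessary; order does all the work.

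The genuine gap is Step 1. You correctly spot that reducing to primes via finiteness of minimal primes is circular, since that finiteness (Corollary~\ref{Cns}) is a \emph{consequence} of this lemma. Your proposed fix---embedding $\S(W_+)/I$ into a possibly infinite product of the $B_{\mf p}[p_{\mf p}^{-1}]$---does not yield a finitely generated algebra, so part (a) is not recovered. The paper sidesteps this entirely: it never reduces to the prime case for (a). Instead it argues by induction on the minimal order of a nonzero element of $I$, using the decomposition $I = (I:p) \cap (I\hat+ p)$ from Corollary~\ref{Cdec}. Since $p \in (I\hat+p)$ and $o(p) < o(f)$, the inductive hypothesis handles $\S(W_+)/(I\hat+p)$; and $\S(W_+)/(I:p)$ embeds in $(\S(W_+)/I)[p^{-1}]$, which is generated by $p^{-1}, x_1,\dots,x_{2n+2}$ exactly as in your Step 2. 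The direct sum of these two finitely generated reduced algebras is the $A$ in part (a). This splitting trick is the missing structural idea in your argument.
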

To prove Lemma~\ref{Lns} we need several auxiliary facts.
Let $f\in I$ be a  nonzero element of minimal   order.
 We pick the smallest number $n$ such that $f \in \kk[x_1, \dots, x_n]$. 
 The following lemma is straightforward.
\begin{lemma} \label{lem:straightforward}
Let $o = o(f)$.
We have $\{x_1, f \}=x_{n+1}p(x_n, x_{n-1}\dots)+q(x_n, x_{n-1},\dots)$ where $p$ and $q$ are  polynomials of order respectively  $\leq o-1$ and $\leq o$.
Further, for $t\in{\ZZ_{\geq 1}}$  we have $$\{x_t, \{x_1, f\}\}=(n+1-t)x_{n+1+t}p+q'(x_{n+t}, x_{n+t-1},\dots)$$ where $q'$ is a  polynomial of order $ \leq o$.
\qed
\end{lemma}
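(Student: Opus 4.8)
The plan is a direct computation with the Poisson bracket, using the Leibniz rule together with the structure constants $\{x_i, x_j\} = (j-i)\,x_{i+j}$ on $\S(W_+)$; the minimality of $n$ plays no role in the two formulas themselves (it is needed afterwards, to guarantee that $p \neq 0$ when $n \geq 2$, which is what the proof of Lemma~\ref{Lns} will use).

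First I would expand $\{x_1, f\}$. Since $f \in \kk[x_1, \dots, x_n]$, the Leibniz rule gives
\[ \{x_1, f\} = \sum_{i=1}^n \{x_1, x_i\}\, \frac{\partial f}{\partial x_i} = \sum_{i=2}^n (i-1)\, x_{i+1}\, \frac{\partial f}{\partial x_i}, \]
the $i = 1$ term vanishing. Only the summand $i = n$ involves the variable $x_{n+1}$, so I would set $p := (n-1)\,\partial f/\partial x_n$ and $q := \sum_{i=2}^{n-1}(i-1)\, x_{i+1}\, \partial f/\partial x_i$, both polynomials in $x_1, \dots, x_n$. For the order bounds, note that $\partial/\partial x_j$ lowers the order gradation by one, so $o(\partial f/\partial x_i) \leq o - 1$; hence $o(p) \leq o-1$, while each summand of $q$ is a product of the order-one element $x_{i+1}$ with an element of order $\leq o-1$, so $o(q) \leq o$.

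For the second identity I would apply $\{x_t, -\}$ to $\{x_1, f\} = x_{n+1}p + q$ and use Leibniz again:
\[ \{x_t, \{x_1, f\}\} = \{x_t, x_{n+1}\}\,p + x_{n+1}\,\{x_t, p\} + \{x_t, q\} = (n+1-t)\, x_{n+1+t}\, p + \bigl( x_{n+1}\{x_t, p\} + \{x_t, q\} \bigr), \]
using $\{x_t, x_{n+1}\} = (n+1-t)\,x_{n+1+t}$. Since $p, q \in \kk[x_1, \dots, x_n]$, both $\{x_t, p\}$ and $\{x_t, q\}$ lie in $\kk[x_1, \dots, x_{n+t}]$, so the parenthesised term $q' := x_{n+1}\{x_t,p\} + \{x_t,q\}$ involves only variables of index $\leq n+t$ (here $n+t \geq n+1$ as $t \geq 1$); in particular $x_{n+1+t}$ occurs only in the displayed leading term. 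Finally $o(\{x_t,p\}) \leq o(p) \leq o-1$ gives $o(x_{n+1}\{x_t,p\}) \leq o$, and $o(\{x_t,q\}) \leq o(q) \leq o$, so $o(q') \leq o$, as claimed.

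The statement is genuinely routine, so there is no real obstacle; the only thing to be careful about is the bookkeeping of which variables survive in each piece — namely that $\{x_t, -\}$ raises the top index of a polynomial in $x_1, \dots, x_m$ by at most $t$ — together with the two order estimates, both of which reduce to $o(\{x_i, g\}) \leq o(g)$ and the fact that differentiation drops the order by one.
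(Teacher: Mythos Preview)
Your proof is correct and is exactly the direct Leibniz-rule computation the paper has in mind; indeed the paper gives no proof at all (the lemma is stated with a bare \qed\ as ``straightforward''), and your write-up simply fills in the routine details.
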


We now prove Lemma~\ref{Lns}.
\begin{proof}[Proof of Lemma~\ref{Lns}]
Let $f$ be as before. If $o(f) = 0$ than $f$ is constant and therefore $I=\U(W_+)$. Thus we can assume from now on that $o(f)\ge1$.
Assume that the order of $f$ is minimal among all elements of ideals for which the statement of the lemma fails, and let $I \ni f$ be such an ideal. 

Let $p$ be as in the statement of  Lemma~\ref{lem:straightforward}.
Then  $o(p)<o(f)$ and thus that $p\not\in I$. 
By Lemma~\ref{lem:rad1}, $p\in (I\hat+p)$. 
By Corollary~\ref{Cdec} there is  an injective map of Poisson algebras
$${\S}(W_+)/I\to{\S}(W_+)/(I: p)\oplus{\S} (W_+)/(I\hat+ p).$$

By minimality of the degree of $f$ the statement of Lemma~\ref{Lns} holds  for the ideal $(I\hat + p)$. 
Therefore to prove (a) it is enough to show that there is an embedding of ${\S}(W_+)/(I: p)$ into a reduced finitely generated commutative algebra. Consider the natural embedding
$$\phi: {\S}(W_+)/(I: p)\to ({\S}(W_+)/I)[p^{-1}],$$
which is a homomorphism of Poisson algebras by Corollary~\ref{cor:comp2}.
The algebra $({\S}(W_+)/I)[p^{-1}]$ is reduced by Lemma~\ref{Lredrad}. 
Further, by 
Lemma~\ref{lem:straightforward} $({\S}(W_+)/I)[p^{-1}]$ is generated as an algebra by $p^{-1}$ and the images of $$x_1, \dots, x_{2n+2}.$$
 This proves part (a).

We now prove part (b).  Let  $p, x_1,\dots, x_{2n+2}$ be as in the proof of part (a). 
Primality of $I$ implies that $(I:p)=I$ so the natural map $\phi: \S(W_+)/I \to \S(W_+)/I[p^{-1}]$ is injective.
Let $B$ be the subalgebra of ${\S}(W_+)/I$ generated by $x_1,\dots, x_{2n+2}$. 
 It is easy to check that $ p, B$ as above satisfy the conclusions of part (b).
 
 Since the maps involved are homomorphisms of Poisson algebras, (c) also holds.
 \end{proof}

Lemma~\ref{Lns} has the following important consequence:

\begin{corollary}\label{Cns} Let $I$ be a  Poisson ideal of ${\S}(W_+)$. Then $I$ has finitely many minimal primes:  that is, there exist prime ideals $\mf p_1,\dots, \mf p_n$ of ${\S}(W_+)$ such that $\sqrt I=\mf p_1\cap\dots\cap \mf p_n$.
Further, the $\mf{p}_i$ are Poisson ideals.
\end{corollary}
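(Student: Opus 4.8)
The plan is to deduce Corollary~\ref{Cns} from Lemma~\ref{Lns}(b) together with the already-recorded fact (Lemma~\ref{lem:minprime}) that minimal primes of a Poisson ideal are Poisson. Since $\sqrt I$ is a radical Poisson ideal by Lemma~\ref{Lprad}, and since the minimal primes of $I$ coincide with those of $\sqrt I$, I may replace $I$ by $\sqrt I$ and assume $I$ is radical from the outset. If $I = \S(W_+)$ there is nothing to prove, so assume $I$ is nontrivial. By the discussion preceding the corollary, it suffices to show that $I$ is a \emph{finite} intersection of primes; finiteness and the Poisson property of the minimal primes then follow, the latter from Lemma~\ref{lem:minprime}.

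\textbf{Reduction to a finitely generated algebra.} First I would apply Lemma~\ref{Lns}(a): there is a finitely generated reduced commutative $\kk$-algebra $A$ with an embedding $\S(W_+)/I \hra A$. The ring $A$ is noetherian, so the zero ideal of $A$ has finitely many minimal primes, say $\mf q_1, \dots, \mf q_n$, with $\bigcap_j \mf q_j = (0)$ (as $A$ is reduced). Pulling back along $\S(W_+) \to \S(W_+)/I \hra A$, each $\mf q_j$ contracts to a prime $\mf p_j$ of $\S(W_+)$ containing $I$, and $\bigcap_j \mf p_j$ is the contraction of $(0)$, which is exactly $I$ since the composite map has kernel $I$. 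Hence $I = \mf p_1 \cap \dots \cap \mf p_n$ is a finite intersection of primes. Discarding redundant terms, the remaining $\mf p_j$ are precisely the minimal primes of $I$, by the observation (already in the text) that if $I \subseteq \mf q$ with $\mf q$ prime then some $\mf p_j \subseteq \mf q$.

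\textbf{The Poisson conclusion.} Finally, since $I$ is a Poisson ideal, Lemma~\ref{lem:minprime} applies directly: each minimal prime $\mf p_j$ over $I$ is a Poisson ideal. This gives all three assertions: $\sqrt I = \mf p_1 \cap \dots \cap \mf p_n$ with the $\mf p_j$ Poisson prime ideals.

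\textbf{Anticipated obstacle.} There is essentially no hard step here — the work has all been done in Lemma~\ref{Lns} and Lemma~\ref{lem:minprime}. The only point requiring a little care is the contraction argument: one must check that the kernel of $\S(W_+) \to A$ really is $I$ (not merely something with radical $I$), which holds because $\S(W_+)/I \hra A$ is an embedding; and that contracting the finitely many minimal primes of the noetherian reduced ring $A$ yields finitely many primes whose intersection is $I$. An alternative, if one preferred not to invoke the embedding into $A$ at all, would be to run an induction on the order of a minimal-order element $f \in I$ exactly as in the proof of Lemma~\ref{Lns}, using the decomposition $I = (I : p) \cap (I \hat+ p)$ from Corollary~\ref{Cdec}: by induction $(I \hat+ p)$ has finitely many minimal primes, and $(I:p)$ pulls back from the finitely generated reduced algebra $(\S(W_+)/I)[p^{-1}]$, which is noetherian; but invoking Lemma~\ref{Lns}(a) directly is cleaner.
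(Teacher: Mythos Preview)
Your proposal is correct and follows essentially the same route as the paper: replace $I$ by $\sqrt I$ (Poisson by Lemma~\ref{Lprad}), embed $\S(W_+)/\sqrt I$ into a finitely generated reduced algebra $A$ via Lemma~\ref{Lns}(a), pull back the finitely many minimal primes of the noetherian ring $A$, and conclude with Lemma~\ref{lem:minprime}. The only minor slip is that your opening line cites Lemma~\ref{Lns}(b) when you actually use part~(a); otherwise the argument matches the paper's almost verbatim, with your added remarks on contraction and the alternative inductive approach being helpful elaborations.
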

\begin{proof}Thanks to Lemma~\ref{Lns} either $I=(0)$ or there is  an embedding  $$\phi: {\S}(W_+)/\sqrt I\to A$$ of $\S(W_+)/\sqrt I$ into  a reduced finitely generated commutative algebra $A$. For such an algebra $A$ we have
$$\mf p_1\cap\dots \cap \mf p_{n}=(0)$$
for some finite set of prime ideals $\mf p_1,\dots, \mf p_{n}$ of $A$. The ideals $\phi^{-1}(\mf p_1),\dots, \phi^{-1}(\mf p_{n})$ are prime in ${\S}(W_+)$ and we have the desired equality
$$I=\bigcap_{i}\phi^{-1}(\mf p_i).$$
The last sentence is Lemma~\ref{lem:minprime}.
\end{proof}

We wish to show that an ascending chain of radical Poisson ideals in $\S(W_+)$ stabilises.  To do this, we recall two definitions of dimension.  The {\em Krull dimension} of a commutative ring $A$, which we write $\Kdim A$, is the supremum over all strictly ascending chains of prime ideals of $A$ of  the length of the chain minus one. 

The {\em Gelfand-Kirillov dimension} (or {\em GK-dimension})  of $A$ is written $\GK A$ and defined as the supremum over all finite-dimensional subspaces $V$ of $A$ of 
$\varlimsup \log_n \dim_{\kk} V^{n}$ (see also Section~\ref{DIMENSION} and~\cite{KL}).

The following facts are well known.
\begin{proposition} \label{Ldpr}  Let $A$ be a commutative $\kk$-algebra.
\begin{itemize}
\item[(a)]  ${\rm GKdim} A\ge \Kdim A$.
\item[(b)] If $A$ is  finitely generated  then $\operatorname{GKdim} A=\Kdim A$.
\item[(c)] If $A$ is  a subalgebra of a finitely generated commutative $A'$ with $\Kdim A' = \Kdim A$ then 
$$\Kdim A'=\GK A'=\Kdim A=\GK A.$$
\item[(d)] Assume that $A$ is a finitely generated domain and let $p\in A\backslash 0$.
 Let $A'$ be an algebra with $$A \subseteq A' \subseteq A[p^{-1}].$$ 
 Then
$\Kdim A'=\Kdim A$.
\item[(e)]  ${\rm GKdim} (A)= {\rm GKdim} (A/\sqrt {(0)})$.
\end{itemize}
\end{proposition}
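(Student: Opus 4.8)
The plan is to take parts (a), (b), and (e) from the standard theory of Gelfand-Kirillov dimension of commutative algebras, see \cite{KL}, and to deduce (c) and (d) formally from them, using in addition that $\GK$ is monotone under passage to subalgebras --- immediate from its definition as a supremum over finite-dimensional subspaces. For (e): writing $\GK A$ as the supremum of $\GK C$ over finitely generated subalgebras $C\subseteq A$, each such $C$ is noetherian, so its nilradical equals $C\cap\sqrt{(0)}$ and is nilpotent, whence $\GK C=\GK\bigl(C/(C\cap\sqrt{(0)})\bigr)$; the latter quotient is the image of $C$ in $A/\sqrt{(0)}$, and every finitely generated subalgebra of $A/\sqrt{(0)}$ lifts to one of $A$, so taking suprema gives $\GK A=\GK\bigl(A/\sqrt{(0)}\bigr)$. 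For (b), one recalls that a finitely generated commutative $\kk$-algebra $C$ satisfies $\GK C=\Kdim C=\max_{\mf q}\trdeg_{\kk}(C/\mf q)$, the maximum taken over the minimal primes $\mf q$ of $C$, via Noether normalisation. For (a), a chain of primes $\mf p_0\subsetneq\dots\subsetneq\mf p_n$ in $A$ contracts to a strictly increasing chain of primes of length $n$ in any finitely generated subalgebra $C$ that contains, for each $i$, an element of $\mf p_i\ssm\mf p_{i-1}$, so that $\GK A\ge\GK C=\Kdim C\ge n$.

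Part (c) is then purely formal: by (b), $\GK A'=\Kdim A'$; by hypothesis $\Kdim A'=\Kdim A$; and by (a) together with monotonicity of $\GK$, one has $\Kdim A\le\GK A\le\GK A'=\Kdim A'=\Kdim A$, forcing all four quantities to coincide.

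For part (d) I would argue in two halves. First, $A[p^{-1}]=A'[p^{-1}]$: every element of $A'$ already lies in $A[p^{-1}]$, in which $p$ is a unit, so $A[p^{-1}]$ is a localisation of $A'$; since Krull dimension never increases under localisation, $\Kdim A[p^{-1}]\le\Kdim A'$. Moreover $A[p^{-1}]\cong A[x]/(px-1)$ is a finitely generated domain whose fraction field coincides with that of $A$, so $\Kdim A[p^{-1}]=\trdeg_{\kk}\operatorname{Frac}(A)=\Kdim A$ by (b); hence $\Kdim A\le\Kdim A'$. Second, for the reverse inequality I would pass through $\GK$, since one cannot invoke monotonicity of Krull dimension here: from $A'\subseteq A[p^{-1}]$, parts (a) and (b) and monotonicity of $\GK$ give $\Kdim A'\le\GK A'\le\GK A[p^{-1}]=\Kdim A[p^{-1}]=\Kdim A$. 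The two inequalities together yield $\Kdim A'=\Kdim A$.

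The only step that calls for genuine care is (d). The naive move --- bounding $\Kdim A'$ above directly by $\Kdim A[p^{-1}]$ via ``monotonicity'' --- is unavailable, because Krull dimension can strictly increase under algebra inclusions (for instance, a domain sits inside its fraction field), so this upper bound must be routed through Gelfand-Kirillov dimension, for which monotonicity does hold. The domain hypothesis on $A$ is likewise used essentially: it is precisely what forces $\Kdim A[p^{-1}]$ to equal, rather than merely be at most, $\Kdim A$. Everything else reduces to assembling the cited standard facts in the right order.
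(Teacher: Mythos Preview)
Your proof is correct and essentially matches the paper's: parts (a) and (b) are taken from \cite{KL}, (c) is the identical chain $\Kdim A \leq \GK A \leq \GK A' = \Kdim A' = \Kdim A$, and (e) is the same reduction to finitely generated subalgebras combined with (b). For (d) the paper simply asserts $\Kdim A \geq \Kdim A' \geq \Kdim A[p^{-1}] = \Kdim A$ without comment; your routing of the first inequality through $\GK$ is a more careful justification of that step, and you are right that it does not follow from any general monotonicity of Krull dimension under inclusion.
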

\begin{proof}
(a) is \cite[Proposition~3.16]{KL} and (b) is \cite[Theorem~4.5]{KL}.
For (c), we have $$\Kdim A \leq \GK A \leq \GK A' = \Kdim A' = \Kdim A.$$  

Part (d) follows from 
the fact that
\[ \Kdim A \geq \Kdim A' \geq \Kdim A[p^{-1}] = \Kdim A.\]

For (e), note that ${\rm GKdim} A$ is equal to the maximum of the Gelfand-Kirillov dimensions of the finitely generated subalgebras of $A$.
It is immediate from the definitions that $$\Kdim A = \Kdim A/ \sqrt{(0)}.$$   
Therefore (e) follows from (b).
\end{proof}

We now derive some more consequences of Lemma~\ref{Lns}.

\begin{corollary} \label{cor1}
Let $I$ be a nontrivial Poisson ideal of ${\S}(W_+)$. Then the Gelfand-Kirillov dimension of ${\S}(W_+)/I$ is finite.\end{corollary}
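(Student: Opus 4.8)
The plan is to reduce the statement to the finitely generated case, where GK-dimension and Krull dimension agree, using the structural result of Lemma~\ref{Lns}. First I would observe that since $\GK$ of a radical quotient agrees with $\GK$ of the original by Proposition~\ref{Ldpr}(e), and since $\sqrt I$ is a Poisson ideal whenever $I$ is (Lemma~\ref{Lprad}), it is enough to bound $\GK \S(W_+)/\sqrt I$. We may therefore assume $I$ is a nontrivial radical Poisson ideal. By Lemma~\ref{Lns}(a) there is an embedding of commutative $\kk$-algebras $\phi: \S(W_+)/I \hookrightarrow A$ with $A$ a finitely generated reduced commutative algebra.

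Next I would invoke monotonicity of GK-dimension under subalgebras: since $\S(W_+)/I$ is (isomorphic to) a subalgebra of $A$, we have $\GK \S(W_+)/I \leq \GK A$. Finally, because $A$ is finitely generated, Proposition~\ref{Ldpr}(b) gives $\GK A = \Kdim A < \infty$. Chaining these inequalities yields $\GK \S(W_+)/I \leq \Kdim A < \infty$, which is the claim. (If $I = \S(W_+)$ the quotient is zero and the statement is trivial, so the dichotomy in Lemma~\ref{Lns} --- either $I$ is trivial or the embedding exists --- covers all cases.)

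I do not expect any real obstacle here: the genuine content has already been extracted into Lemma~\ref{Lns}, and what remains is the easy bookkeeping of assembling the inequalities $\GK \S(W_+)/I = \GK \S(W_+)/\sqrt I \leq \GK A = \Kdim A$. The only point requiring a moment's care is to make sure the embedding in Lemma~\ref{Lns}(a) is used for $\sqrt I$ rather than $I$ itself (so that the hypothesis "radical" is met), which is exactly why the reduction via Proposition~\ref{Ldpr}(e) is placed first. One could equivalently phrase the whole argument using part (c) of Proposition~\ref{Ldpr} if one additionally knew $\Kdim A = \Kdim \S(W_+)/\sqrt I$, but that refinement is unnecessary for mere finiteness and I would not pursue it here.
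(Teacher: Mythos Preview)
Your proposal is correct and follows essentially the same route as the paper: reduce to the radical case via Proposition~\ref{Ldpr}(e), then embed $\S(W_+)/I$ into a finitely generated commutative algebra $A$ using Lemma~\ref{Lns}(a), and conclude by monotonicity of GK-dimension together with finiteness of $\GK A$. The only cosmetic difference is that the paper bounds $\GK A$ directly by the number of generators rather than passing through $\Kdim A$, and it leaves the fact that $\sqrt I$ is Poisson implicit where you cite Lemma~\ref{Lprad} explicitly.
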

\begin{proof} 
By Proposition~\ref{Ldpr}(e),  
we can assume that $I=\sqrt I$. Next, according to Lemma~\ref{Lns} there is an embedding of ${\S}(W_+)/I$ into a  finitely-generated commutative algebra $A$. All such algebras have finite Gelfand-Kirillov dimension which does not exceed the cardinality of a set of generators. Thus
$$\operatorname{GKdim}{\S}(W_+)/I\le\operatorname{GKdim}A< \infty.$$
\end{proof}

\begin{corollary}\label{cor2}
 Let $I$ be a nontrivial Poisson ideal of ${\S}(W_+)$.  Then $$\operatorname{Kdim}({\S}(W_+)/I)=\operatorname{GKdim}({\S}(W_+)/I)< \infty.$$ 
 \end{corollary}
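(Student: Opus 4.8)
The plan is to reduce to the finitely generated case handled by Proposition~\ref{Ldpr} and then invoke Corollary~\ref{cor1}. First I would note that, by Proposition~\ref{Ldpr}(e) applied to $\S(W_+)/I$, both $\Kdim$ and $\GK$ are unchanged on passing to $\S(W_+)/\sqrt I$; and since $I$ is Poisson, so is $\sqrt I$ by Lemma~\ref{Lprad}, and $\sqrt I$ is again nontrivial. So without loss of generality $I = \sqrt I$ is a nontrivial radical Poisson ideal.

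Next I would apply Lemma~\ref{Lns}(a): there is a finitely generated reduced commutative algebra $A$ with an embedding $\S(W_+)/I \hookrightarrow A$. The issue is that the hypothesis of Proposition~\ref{Ldpr}(c) requires $\Kdim A = \Kdim \S(W_+)/I$, which is not automatic from an arbitrary embedding into a finitely generated algebra. To get around this I would instead use the more refined decomposition coming from the proof of Lemma~\ref{Lns}: by Corollary~\ref{Cns}, $I = \mf p_1 \cap \dots \cap \mf p_n$ with each $\mf p_j$ a minimal prime of $I$, hence a Poisson ideal, and $\Kdim \S(W_+)/I = \max_j \Kdim \S(W_+)/\mf p_j$ while $\GK \S(W_+)/I = \max_j \GK \S(W_+)/\mf p_j$ (the latter because GK-dimension of a finite subdirect product is the max of the factors, using that each $\S(W_+)/\mf p_j$ is a quotient of $\S(W_+)/I$). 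So it suffices to treat the prime case, i.e. to show $\Kdim \S(W_+)/\mf p = \GK \S(W_+)/\mf p$ for a nontrivial Poisson prime $\mf p$.

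For the prime case I would invoke Lemma~\ref{Lns}(b): there is a finitely generated subalgebra $B \subseteq \S(W_+)/\mf p$ and an element $p \in B$ with $\S(W_+)/\mf p \subseteq B[p^{-1}]$. Since $\S(W_+)/\mf p$ is a domain, $B$ is a finitely generated domain, so Proposition~\ref{Ldpr}(d) gives $\Kdim B[p^{-1}] = \Kdim B$, and Proposition~\ref{Ldpr}(b) gives $\GK B = \Kdim B$. Then, applying Proposition~\ref{Ldpr}(c) with $A := B$ sitting inside the finitely generated algebra $A' := B[p^{-1}]$ — whose Krull dimensions agree — actually we want the squeeze directly: $\Kdim B \le \Kdim \S(W_+)/\mf p \le \Kdim B[p^{-1}] = \Kdim B$, so all three equal $\Kdim B$, and similarly $\GK B \le \GK \S(W_+)/\mf p \le \GK B[p^{-1}] = \Kdim B[p^{-1}] = \Kdim B = \GK B$ using Proposition~\ref{Ldpr}(a),(b) for $B[p^{-1}]$. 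Hence $\Kdim \S(W_+)/\mf p = \GK \S(W_+)/\mf p$, and finiteness is Corollary~\ref{cor1} (or immediate from the above).

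The only genuine obstacle is the bookkeeping in reducing the radical case to the prime case: one must check that $\GK$ and $\Kdim$ of a finite subdirect product into the $\S(W_+)/\mf p_j$ are each the maximum over $j$. For $\Kdim$ this is the standard fact that $\Kdim(\S(W_+)/I) = \max_j \Kdim(\S(W_+)/\mf p_j)$ since the $\mf p_j/I$ are exactly the minimal primes of the reduced ring $\S(W_+)/I$. For $\GK$, the inequality $\GK \S(W_+)/I \ge \max_j \GK \S(W_+)/\mf p_j$ is clear as each $\S(W_+)/\mf p_j$ is a quotient; the reverse follows because a finite-dimensional subspace $V$ of $\S(W_+)/I$ injects into $\bigoplus_j \S(W_+)/\mf p_j$, so $\dim V^n \le \sum_j \dim (V_j)^n$ where $V_j$ is the image of $V$, giving $\GK \le \max_j \GK \S(W_+)/\mf p_j$. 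Everything else is a direct citation of the already-established lemmas.
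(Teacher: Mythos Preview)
Your proof is correct and follows essentially the same route as the paper: reduce to the radical, decompose into Poisson primes via Corollary~\ref{Cns}, and handle each prime factor using Lemma~\ref{Lns}(b) together with Proposition~\ref{Ldpr}(c,d), then read off finiteness from Corollary~\ref{cor1}. One small caveat: in your Krull-dimension squeeze, neither inequality $\Kdim B \le \Kdim \S(W_+)/\mf p$ nor $\Kdim \S(W_+)/\mf p \le \Kdim B[p^{-1}]$ holds automatically for subrings, but you don't actually need them --- Proposition~\ref{Ldpr}(d), which you already cite, gives $\Kdim \S(W_+)/\mf p = \Kdim B$ directly (take $A' = \S(W_+)/\mf p$ there), and then your GK squeeze, which \emph{is} valid since GK-dimension is monotone in subalgebras, finishes the job.
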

\begin{proof}
As $\Kdim \S(W_+)/I = \Kdim \S(W_+)/\sqrt I$ we may without loss of generality assume that $I = \sqrt I$.
By Corollary~\ref{Cns}, there are Poisson primes $\mf p_1, \dots, \mf p_m$ so that $I = \mf p_1 \cap \dots \cap \mf p_m$.
Thus we have
\[ \max_i \Kdim \S(W_+)/\mf p_i =\Kdim \S(W_+)/I \leq \GK \S(W_+)/I \leq \max_i \GK \S(W_+)/\mf p_i,\]
where the first inequality is Proposition~\ref{Ldpr}(a) and the second comes from the embedding
\[ \S(W_+)/I \hra \bigoplus_i \S(W_+)/\mf p_i,\]
together with \cite[Proposition~3.2]{KL}.
By Lemma~\ref{Lns}(b) and Proposition~\ref{Ldpr}(c,d), for all $i$ we have $$\Kdim \S(W_+)/\mf p_i = \GK \S(W_+)/\mf p_i.$$
Applying Corollary~\ref{cor1}, the result follows.
\end{proof}

The final theorem of this section is:

\begin{theorem}\label{thm:acc} Any ascending sequence of radical Poisson ideals of $\S(W_+)$ stabilises.  
\end{theorem}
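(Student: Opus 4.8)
The plan is to prove the ascending chain condition on radical Poisson ideals by induction on a notion of dimension, using the decomposition $I = (I:p) \cap (I\hat+p)$ from Corollary~\ref{Cdec} together with the structure result Lemma~\ref{Lns}. First I would observe that, by Corollary~\ref{cor2}, every proper radical Poisson ideal $I$ of $\S(W_+)$ has a well-defined finite Krull dimension $\Kdim \S(W_+)/I$, and that the zero ideal is the unique radical Poisson ideal $I$ with $\S(W_+)/I$ of infinite (or "undefined") dimension. So I set up the induction on $D(I) := \Kdim \S(W_+)/I \in \NN \cup \{\infty\}$, where $D((0)) = \infty$ by convention, and I prove: every ascending chain of radical Poisson ideals $I$ with $D(I) \le d$ stabilises, by induction on $d$, with the base case $d = -1$ (i.e.\ $I = \S(W_+)$) being trivial.

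The key step is the inductive one. Suppose $I_1 \subseteq I_2 \subseteq \cdots$ is an ascending chain of radical Poisson ideals, not all equal to $\S(W_+)$; discarding an initial segment, assume $I_1 \ne \S(W_+)$, so $I_1 = \sqrt{I_1}$ is nontrivial. By Corollary~\ref{Cns}, $I_1 = \mf{p}_1 \cap \cdots \cap \mf{p}_m$ with each $\mf{p}_j$ a Poisson prime, and it suffices (a standard reduction) to handle a chain lying above a fixed Poisson prime $\mf{p} = \mf{p}_1$, i.e.\ to bound chains of radical Poisson ideals all containing $\mf{p}$; equivalently, to prove ACC on radical Poisson ideals of the Poisson domain $\S(W_+)/\mf{p}$. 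Here I apply Lemma~\ref{Lns}(b): there is a finitely generated Poisson subalgebra $B \subseteq \S(W_+)/\mf{p}$ and $p \in B$ with $\S(W_+)/\mf{p} \subseteq B[p^{-1}]$. A radical (Poisson) ideal $J$ of $\S(W_+)/\mf{p}$ is determined by its contraction to $B$ together with the localisation data, and more usefully: the chain of ideals $J_i \cap B$ in the \emph{noetherian} ring $B$ stabilises, and $J_i$ is recovered from $J_i \cap B$ by $J_i = \sqrt{(J_i \cap B)\S(W_+)/\mf{p}}$ provided we also control those ideals $J_i$ not meeting $B$ appropriately — this is exactly the point where $(I:p)$ versus $(I\hat+p)$ enters.

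More precisely, the cleaner route is: given a chain $\{I_i\}$ of radical Poisson ideals with $D(I_i) \le d$ for all $i$, fix $0 \ne f \in I_1$ of minimal order and let $p$ be the polynomial produced in Lemma~\ref{lem:straightforward}, so $p \notin I_1$ (hence $p \notin I_i$ is false in general — but $p \notin I_1$ at least). By Corollary~\ref{Cdec}, $I_i = (I_i : p) \cap (I_i \hat+ p)$ for all $i$ once $I_i \supseteq I_1 \ni f$; the chains $\{(I_i:p)\}$ and $\{(I_i\hat+p)\}$ are each ascending chains of radical Poisson ideals. The chain $\{(I_i:p)\}$ lives inside $(\S(W_+)/I_1)[p^{-1}]$, which is a localisation of a finitely generated algebra by Lemma~\ref{Lns}, hence noetherian, so it stabilises. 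For the chain $\{(I_i\hat+p)\}$: since $p \in (I_i\hat+p)$ and $p \notin I_1$, one checks $D(I_i\hat+p) < D(I_1) \le d$ (adding a nonzerodivisor-type element $p$ strictly drops Krull dimension on each component, using that $p$ is a nonzerodivisor modulo the relevant primes), so by the inductive hypothesis this chain stabilises as well. Once both $\{(I_i:p)\}$ and $\{(I_i\hat+p)\}$ stabilise, so does $\{I_i\}$.

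The main obstacle I anticipate is making rigorous the dimension drop $D(I\hat+p) < D(I)$: one must verify that passing from a nontrivial radical Poisson ideal $I$ to $(I\hat+p)$, which contains $p \notin \sqrt{I}$, strictly decreases $\Kdim \S(W_+)/(-)$. This requires knowing that $p$ is not contained in every minimal prime of $I$ — true since $p \notin \sqrt I = I$ — and then, on each affine chart coming from Lemma~\ref{Lns}, that cutting by a nonzero element of a domain of finite Krull dimension drops the dimension, which is standard for finitely generated domains (Proposition~\ref{Ldpr}(b) and Krull's principal ideal theorem) and transfers to the subalgebras $B$ via Proposition~\ref{Ldpr}(c,d). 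A secondary technical point is the bookkeeping needed because $p$ was chosen from $f \in I_1$ and need not be outside the later $I_i$; but since $(I_i\hat+p) \supseteq (I_1\hat+p)$ always contains $p$, and $(I_i:p) \supseteq (I_i:p)$ is handled by the noetherian localisation, the argument goes through after noting $I_i = (I_i:p)\cap(I_i\hat+p)$ holds for all $i \ge 1$ by Corollary~\ref{Cdec} applied with $b = p$.
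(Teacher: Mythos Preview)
Your argument has a real gap at the step $D((I_i\hat+p)) < D(I_1)$. For a radical ideal $I$ with (finitely many) minimal primes $\mf p_1,\dots,\mf p_m$, unwinding the definition gives
\[
(I\hat+p)=\bigcap_{p\in\mf p_k}\mf p_k,
\]
i.e.\ it is exactly the intersection of those minimal primes of $I$ which \emph{already} contain $p$. So passing from $I$ to $(I\hat+p)$ is not ``cutting by $p$'' in the sense of Krull's principal ideal theorem; it merely discards some irreducible components of $V(I)$ and keeps the others unchanged. Your choice of $p$ guarantees only $p\notin I_1=\bigcap_k\mf p_k$, hence $p$ avoids \emph{some} $\mf p_k$, but nothing stops $p$ from lying in a top-dimensional one---and then $D((I_1\hat+p))=D(I_1)$, so the induction does not move. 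There is a second, related problem: the assignment $I\mapsto(I\hat+p)$ is not monotone in $I$ (e.g.\ take $I_1=\mf p_1\cap\mf p_2$ with $p\in\mf p_1\setminus\mf p_2$, and $I_2=\mf q\supsetneq\mf p_2$ with $p\in\mf q$ but $\mf p_1\not\subseteq\mf q$), so $\{(I_i\hat+p)\}$ need not be an ascending chain at all.

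The paper avoids the decomposition $I=(I:p)\cap(I\hat+p)$ here. Instead it isolates a purely combinatorial lemma (Lemma~\ref{lemma:stab}): in any commutative ring, an ascending chain $I_1\subseteq I_2\subseteq\cdots$ of radical ideals stabilises provided $\Kdim(A/I_1)<\infty$ and each $I_j$ has only finitely many minimal primes. The proof tracks the minimal primes directly: each minimal prime of $I_{j+1}$ contains one of $I_j$, so by dimension the set of top-codimension minimal primes can only shrink along the chain and eventually freezes to a fixed intersection $J$; writing $I_j=J\cap K_j$ with $K_j$ the intersection of the strictly smaller-codimension primes, one checks $K_j\subseteq K_{j+1}$ and finishes by induction on codimension. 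Corollaries~\ref{Cns} and~\ref{cor2} supply exactly the two hypotheses needed to apply this to $\S(W_+)$. Your noetherian-localisation observation that $\{(I_i:p)\}$ stabilises is correct, but it does not help with the complementary piece, and the paper's argument does not use it.
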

The proof of Theorem~\ref{thm:acc} is based on the above results and the following  lemma.
\begin{lemma}\label{lemma:stab}Let $A$ be a commutative algebra and $(0)\subseteq I_1\subseteq I_2\subseteq\dots$ be a chain of radical ideals of $A$ such that:
\begin{itemize}
\item[(a)] $\Kdim (A/I_1)<\infty$,

\item[(b)] there are only finitely many minimal primes over $I_j$ in $A$ for all $j\ge1$.\end{itemize}
Then $I_{j+1}=I_j$ for $j\gg0$, i.e. the sequence $I_1, I_2,\dots$ stabilises.
\end{lemma}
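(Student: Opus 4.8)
The plan is to argue by induction on $\Kdim(A/I_1)$, reducing at each stage to a chain that lives in a quotient of strictly smaller Krull dimension. First I would observe that it suffices to prove the following: for a chain $I_1 \subseteq I_2 \subseteq \cdots$ of radical ideals satisfying (a) and (b), all but finitely many of the inclusions are equalities. If $\Kdim(A/I_1) = 0$, then since $A/I_1$ is reduced with finitely many minimal primes, it is a finite product of fields, so there is nothing to prove. For the inductive step, let $\mf p_1, \dots, \mf p_r$ be the minimal primes over $I_1$, so $I_1 = \mf p_1 \cap \cdots \cap \mf p_r$. Each chain $I_j$ maps to a chain of radical ideals $\overline{I_j} = (I_j + \mf p_s)/\mf p_s$ in $A/\mf p_s$, a domain; and the chain $I_1 \subseteq I_2 \subseteq \cdots$ stabilises if and only if each of the finitely many chains $\overline{I_1} \subseteq \overline{I_2} \subseteq \cdots$ stabilises, because $I_j = \bigcap_s (\text{preimage of } \overline{I_j})$ for $j \geq 1$ — here I use that $I_j \supseteq I_1$ and $I_1 = \bigcap_s \mf p_s$, so an element of $\bigcap_s (I_j + \mf p_s)$ lies in $I_j$ modulo each $\mf p_s$, hence in $I_j + \mf p_s$ for each $s$; a short argument with radicality and the fact that $I_j$ is the intersection of its minimal primes (finitely many by (b)) closes this. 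Thus we may assume $A$ is a domain and $I_1 = (0)$.

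Now suppose $A$ is a domain with $\Kdim A = n < \infty$ and $(0) = I_1 \subseteq I_2 \subseteq \cdots$ a chain of radical ideals, each with finitely many minimal primes. If the chain does not stabilise, pass to a strictly increasing subchain; we may assume $I_1 \subsetneq I_2 \subsetneq \cdots$. Then $I_2 \neq (0)$, so $A/I_2$ has $\Kdim(A/I_2) \leq n - 1$ (every chain of primes in $A/I_2$ lifts to a chain in $A$ that can be extended below by $(0) \subsetneq \mf p$ for any minimal prime $\mf p$ of $I_2$). The chain $I_2 \subseteq I_3 \subseteq \cdots$ satisfies hypothesis (a) with bound $n-1$ and hypothesis (b) by assumption, so by the inductive hypothesis it stabilises, contradicting strict monotonicity. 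This completes the induction and the proof.

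With Lemma~\ref{lemma:stab} in hand, Theorem~\ref{thm:acc} follows immediately: given an ascending chain of radical Poisson ideals $J_1 \subseteq J_2 \subseteq \cdots$ in $\S(W_+)$, if $J_1 = (0)$ we may discard it (a chain stabilises iff its tail does) and replace $J_1$ by $J_2$; so we may assume $J_1 \neq (0)$. Then Corollary~\ref{cor1} gives $\GK \S(W_+)/J_1 < \infty$, hence $\Kdim \S(W_+)/J_1 \leq \GK \S(W_+)/J_1 < \infty$ by Proposition~\ref{Ldpr}(a), verifying (a); and Corollary~\ref{Cns} gives that each $J_j$ has finitely many minimal primes, verifying (b). Apply Lemma~\ref{lemma:stab} with $A = \S(W_+)$.

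The main obstacle I anticipate is the reduction-to-domain step: one must be careful that passing to $A/\mf p_s$ does not lose information about whether the original chain stabilises, and in particular that the hypothesis "$I_j$ has finitely many minimal primes" is genuinely needed to reconstruct $I_j$ as $\bigcap_s (I_j + \mf p_s)$ intersected appropriately — without finiteness of minimal primes over each $I_j$, a radical ideal containing $I_1$ need not be recoverable from its images in the $A/\mf p_s$. A clean way to handle this is to note that for $j \geq 1$, since $I_1 \subseteq I_j$ and $I_1 = \mf p_1 \cap \cdots \cap \mf p_r$, the minimal primes of $I_j$ are among those primes containing some $\mf p_s$, so $I_j = \bigcap_s \bigl(\bigcap \{\mf q \st \mf q \text{ minimal over } I_j,\ \mf q \supseteq \mf p_s\}\bigr)$ and each inner intersection is the preimage of a radical ideal of $A/\mf p_s$ with finitely many minimal primes; stabilisation of the chain in each $A/\mf p_s$ then forces stabilisation upstairs. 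I would write this out carefully as it is the one place where the bookkeeping is delicate.
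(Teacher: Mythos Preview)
Your reduction to the domain case has a genuine gap. You pass from the chain $(I_j)$ in $A$ to chains $(\overline{I_j})$ in each $A/\mf p_s$, and then in the domain case you apply the inductive hypothesis to $\overline{I_2}\subseteq\overline{I_3}\subseteq\cdots$. But to invoke the lemma for smaller Krull dimension you need that chain to consist of \emph{radical} ideals each having \emph{finitely many minimal primes}. Neither is guaranteed. With $\overline{I_j}=(I_j+\mf p_s)/\mf p_s$: take $A=\kk[x,y]$, $I_1=(y)\cap(x^2-y)$, $I_2=(x^2-y)$, $\mf p_s=(y)$; then $(I_2+\mf p_s)/\mf p_s=(x^2)$ is not radical. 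Your proposed fix $I_j^{(s)}=\bigcap\{\mf q:\mf q\text{ minimal over }I_j,\ \mf q\supseteq\mf p_s\}$ does give radical ideals with finitely many minimal primes, but the resulting sequence need not be ascending: in $A=\kk[x,y,z]$ with $I_1=(yz)$, $I_2=(xy,yz)=(y)\cap(x,z)$, $I_3=(y,z)$ and $\mf p=(z)$, one gets $I_2^{(\mf p)}=(x,z)$ and $I_3^{(\mf p)}=(y,z)$, which are incomparable. Taking radicals of $I_j+\mf p_s$ gives an ascending chain of radical ideals, but there is no reason in a non-noetherian ring for $\sqrt{\mf q_k+\mf p_s}$ to have only finitely many minimal primes, so hypothesis~(b) is lost.

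The paper's argument sidesteps this by never passing to a quotient ring. It works entirely inside $A$: writing $I_j=P_{j,1}\cap\cdots\cap P_{j,n_j}$, it separates the minimal primes of $I_j$ of maximal codimension $k+1$ from those of codimension $\le k$. Each $P_{j+1,i}$ of codimension $k+1$ must equal one of the $P_{j,i'}$ of codimension $k+1$, so after discarding finitely many terms these top-dimensional primes form a fixed finite set with intersection $J$. A short prime-avoidance argument then shows that the intersections $K_j$ of the remaining (lower-codimensional) primes form an ascending chain, and $I_j=J\cap K_j$. Since $\codim K_j\le k$, induction applies to $(K_j)$ \emph{in $A$}, where hypothesis~(b) is immediate because the minimal primes of each $K_j$ are by construction among the finitely many minimal primes of $I_j$.
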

\begin{proof}
If $I$ is an ideal of $A$, write $$\codim I:= \Kdim (A/I).$$
Put $$I_j = P_{j,1} \cap \cdots \cap P_{j,n_j},$$ where the $P_{j,i}$ are the finitely many minimal primes over $I_j$. We have
$$\codim I_j = \max_i \codim P_{j,i}\le\codim I_1<\infty.$$

We induct on $\codim I_1$.  If $\codim I_1 = 0$ then the $P_{j,i}$ are maximal ideals. Since (by primality) each $P_{j+1,i} \supseteq I_j$ contains some $P_{j, i'}$, we have $P_{j+1,i} = P_{j, i'}$ and so $\{ P_{j,*} \}\supseteq \{P_{j+1,*}\}$ and  $n_{j+1} \leq n_j$.  For $j \gg 0$ all $n_j$ are equal and thus all $I_j$ are equal.

So now assume that any ascending chain that begins with a radical Poisson ideal of codimension $k$ must be finite, and suppose that $\codim I_1 = k+1$.
Without loss of generality, all $I_j$ have codimension $k+1$.
Reorder the $P_{j,i}$ so that they have codimension $k+1$ for $i \leq \ell_j$ and codimension $\leq k$ for $\ell_j < i \leq n_j$.
Now  each $P_{j+1,i}$ contains some $P_{j, i'}$ and for dimension reasons if $i \leq \ell_{j+1}$ then we must have $i' \leq \ell_j$ and $P_{j+1,i} = P_{j,i'}$.  Thus $ \ell_{j+1} \leq \ell_j$ and we may assume without loss of generality that all $\ell_j$ are equal to some $\ell$ and for $i \leq \ell $ that all $P_{j,i}$ are equal.

Let $J = P_{1,1} \cap \dots \cap P_{1,\ell}$ and  $K_j = \cap_{i > \ell} P_{j,i}$, so  $I_j = J \cap K_j$.
As all the $P_{j,i}$ are minimal over $I_j$, for fixed $j$ the $P_{j,i}$ are mutually incomparable (i.e. $P_{j, i_1}\not\subseteq P_{j, i_2}$ if $i_1\ne i_2$).
Let $\ell < i \leq n_{j+1}$.  
By primality, $P_{j+1,i} $ does not contain $J$.  
As $P_{j+1,i} \supseteq I_j =J \cap K_j$, we have $P_{j+1,i} \supseteq K_j$ and thus $K_{j+1} \supseteq K_j$.
Since $\codim K_j \leq k$, by induction the $K_j$ stabilise and thus the chain $I_j =  J \cap K_j$ stabilises.
\end{proof}
\begin{proof}[Proof of Theorem~\ref{thm:acc}] Any ascending chain of radical Poisson ideals of $\S(W_+)$ satisfies the assumptions of Lemma~\ref{lemma:stab} thanks to Corollary~\ref{Cns} and Corollary~\ref{cor2}. Therefore Theorem~\ref{thm:acc} follows from Lemma~\ref{lemma:stab}.\end{proof}

Let $J$ be an ideal of $\U(W_+)$.  
Since $fg -gf \in J$ for any $g \in J$, $f\in \U(W_+)$, then $\gr_o(J)$ is a Poisson ideal of $\S(W_+)$.
By \cite[Proposition~1.6.8]{MR}, if $\S(W_+)$ satisfies the ascending chain condition (ACC) on Poisson ideals, it would follow that $\U(W_+)$ has ACC on ideals.   
We cannot prove this at the moment, but the argument above does give:

\begin{corollary}\label{cor:acc}
The algebra $\U(W_+)$ satisfies the ascending chain condition on ideals whose associated graded ideals are radical. \qed
\end{corollary}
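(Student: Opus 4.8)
The plan is to combine Theorem~\ref{thm:acc} with the standard filtered--graded lifting principle \cite[Proposition~1.6.8]{MR} already invoked in the paragraph preceding the statement, now restricting attention to chains all of whose associated graded ideals happen to be radical.

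In detail: suppose $J_1 \subseteq J_2 \subseteq \cdots$ is an ascending chain of ideals of $\U(W_+)$ with $\gr_o(J_i)$ radical for every $i$. As observed just before the statement, each $\gr_o(J_i)$ is a Poisson ideal of $\S(W_+)$, and forming associated graded objects with respect to the order filtration preserves inclusions of submodules, so
\[ \gr_o(J_1) \subseteq \gr_o(J_2) \subseteq \cdots \]
is an ascending chain of radical Poisson ideals of $\S(W_+)$. By Theorem~\ref{thm:acc} this chain stabilises, say $\gr_o(J_i) = \gr_o(J_{i+1})$ for all $i \geq N$. Now I would use that the order filtration on $\U(W_+)$ is exhaustive and separated --- indeed $\U(W_+)_{\leq n} = 0$ for $n < 0$ --- so that equality of associated graded ideals forces equality of the ideals themselves: if $x \in J_{i+1}$ has order $n$, its image in the degree-$n$ component of $\gr_o(J_{i+1}) = \gr_o(J_i)$ is the image of some $y \in J_i$ of order $\leq n$; then $x - y \in J_{i+1}$ has strictly smaller order, and induction on the order (base case $x - y = 0$, permitted by separatedness) yields $x \in J_i$. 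Hence $J_i = J_{i+1}$ for $i \geq N$, and the original chain stabilises.

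I do not anticipate a genuine obstacle: the entire substance is Theorem~\ref{thm:acc}, and the passage from $\S(W_+)$ to $\U(W_+)$ is the classical lifting argument, available because the order filtration is positive. The only thing to watch is that Theorem~\ref{thm:acc} addresses only \emph{radical} Poisson ideals, so the argument genuinely needs \emph{every} $\gr_o(J_i)$ in the chain to be radical --- exactly the hypothesis of the corollary. This is also why the hypothesis cannot presently be dropped: removing it would require ACC on all Poisson ideals of $\S(W_+)$, which is still open.
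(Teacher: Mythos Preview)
Your argument is correct and is exactly the approach the paper intends: the corollary is marked with an immediate \qed because it follows from Theorem~\ref{thm:acc} together with the filtered--graded lifting principle \cite[Proposition~1.6.8]{MR} invoked in the paragraph just before the statement, and you have simply written out that lifting step explicitly.
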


We do not know what conditions on an ideal $I$ of $\U(W_+)$ guarantee that the associated graded ideal is radical.
However, it is known that if $I$ is the kernel of one of the homomorphisms from $\U(W_+)$ to an Artin-Schelter regular algebra considered in \cite{SW2}, then the associated graded ideal of $I$ with respect to the order filtration is prime.  Note in this case that $I$ is completely prime.

\begin{remark}\label{Rdalg}
Some results of this section
 can also be deduced from differential algebra (see \cite{DiffAlg} and \cite{Marker}).
Differential algebra (as a branch of mathematics) considers commutative algebras with derivation(s) and the ideals of such algebras which are stable under the derivation(s).  Now, the adjoint action of $W_+$ on itself defines an action of $W_+$ on $\S(W_+)$ by derivations such that
$$e_i\cdot x_j=(j-i)x_{i+j}.$$
The Poisson ideals of $\S(W_+)$ are the ideals of $\S(W_+)$ which are stable under all of the above derivations (equivalently under the derivations induced by $e_1$ and $e_2$).
Thus it is quite natural to connect  results on  the Poisson structure of $\S(W_+)$  with the results of differential algebra.

Consider $\S(W_+)$ as a differential algebra with respect to the derivation $\del_1$ defined by $e_1$.  By the above, any Poisson ideal $I$ of $\S(W_+)$ is a differential ideal with respect to $\del_1$. It is easy to check that $(\S(W_+), \del_1)$ is generated by $x_1, x_2$ as a differential algebra. It follows from the Ritt-Raudenbush basis theorem~\cite[Theorem~7.1]{DiffAlg} that any chain of radical $\del_1$-differential ideals of $\S(W_+)$ stabilises, and thus any chain of radical Poisson ideals of $\S(W_+)$ stabilises. 

Note also that it can be deduced from \cite[Lemma~1.8]{Marker} that if $I$ is a prime differential ideal of $(\S(W_+), \del_1)$ then there is $f \in I$ such that $I$ is the minimal prime differential ideal containing $f$.    We thank Omar Leon Sanchez for calling our attention to this result.

Overall, this shows that 
differential algebra can be helpful in the study of Poisson ideals of $\S(W_+)$.

\end{remark}

\section{Growth of (Poisson) algebras}\label{DIMENSION}
In this section we first define the {\em Poisson GK-dimension} of a Poisson algebra, and then show that this can be used to compute the GK-dimension of an almost-commutative filtered ring under appropriate conditions.
Finally, we give applications of our general results to $\U(W_+)$.

\subsection{Poisson GK-dimension}
In this subsection we define and give general results on Poisson GK-dimension.  The techniques here are standard, but since the terminology is new we give the proofs in a fairly high level of detail.

We begin with definitions. We work over the fixed ground field  $\kk$, and write $\dim V$ for $\dim_\kk V$ if $V$ is a $\kk$-vector space.
  We first recall some standard definitions from~\cite[Chapter~1]{KL}.  

\begin{defn} \label{def:growth}
Let $f,g$ be monotone increasing functions from $\NN$ to $\RR^+$.  We say $f \leq^* g$ if there are  $c,m \in \NN$  so that $f(n) \leq cg(mn)$ for all but finitely many  $n \in\NN$, and $f \sim g$ if $f \leq^*g$ and $g \leq^*f$.  
We let $\sG(f)$ be the $\sim$-equivalence class of $f$, and write $\sG(f) \leq \sG(g)$ if $f \leq^* g$.

For $0 \leq \gamma  \in \RR$ and $n \in \NN$, let $p_\gamma(n) := n^\gamma$.  Let $\mc{P}(\gamma) := \sG(p_\gamma)$.

If $R$ is a $\kk$-algebra which is generated by a finite-dimensional subspace $V$, define 
\[ \dd_V(n) := \dim \left(\sum_{k=0}^n V^k\right).\]
By \cite[Lemma~1.1]{KL}, the growth $\sG(\dd_V)$ does not depend on the choice of the generating subspace $V$, and we refer to it as {\em the growth of $R$}, written $\sG(R)$.

The {\em Gelfand-Kirillov dimension} or {\em GK-dimension of $R$ } is:
\[ \GK(R) := \inf \{ \gamma: \sG(R) \leq \sP(\gamma)\} = \varlimsup \log_n \dd_V(n),\]
where $V$ is a finite-dimensional  subspace of $R$ which generates $R$ as an algebra.
(The last equality is \cite[Lemma~2.1]{KL}.)

For a not necessarily finitely generated algebra $R$, we define $\GK(R) = \sup_{R'} \GK(R')$, where the supremum is taken over all finitely generated subalgebras $R'$ of $R$.
\end{defn}

Our first task is to define the Poisson GK-dimension of a Poisson algebra.  

\begin{defn}\label{def:Pgrowth}
Let $A$ be a Poisson algebra over $\kk$.
Let $V$ be a subspace of $A$.  We inductively define the subspaces $V^{\{n\}}$ as follows:
\begin{itemize}
\item $V^{\{0\}} := \kk$
\item For $n \in \NN$, define $V^{\{n+1\}} := V V^{\{n\}} + \{ V, V^{\{n\}}\}$
\item In particular $V^{\{1\}} := V$.
\end{itemize}
If $V$ is finite-dimensional, we define
\[ \pd_{V}(n)   := \dim \left(\sum_{k=0}^n V^{\{k\}}\right).\]
\end{defn}

We wish to show $\sG(\pd_V)$ does not depend on $V$ as long as $V$ generates $A$ as a Poisson algebra.  
We first show:

\begin{lemma}\label{lem:Ppowers}
Let $A$ be a Poisson algebra and let $V$ be a subspace of $A$.
For all $a,b \in \NN$, we have $$V^{\{a\}} V^{\{b\}} + \{ V^{\{a\}}, V^{\{b\}}\} \subseteq V^{\{a+b\}}.$$
\end{lemma}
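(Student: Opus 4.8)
The plan is to prove the two inclusions $V^{\{a\}} V^{\{b\}} \subseteq V^{\{a+b\}}$ and $\{ V^{\{a\}}, V^{\{b\}} \} \subseteq V^{\{a+b\}}$ simultaneously by a \emph{nested} induction: an outer induction on $n = a+b$, and, for each fixed $n$, an inner induction on $a$ running from $0$ up to $n$. The base case $n = 0$ is immediate since $V^{\{0\}} = \kk$, giving $V^{\{0\}} V^{\{0\}} = \kk$ and $\{ V^{\{0\}}, V^{\{0\}} \} = 0$; and in the inner induction the case $a = 0$ is likewise trivial, since $V^{\{0\}} V^{\{n\}} = V^{\{n\}}$ and $\{ V^{\{0\}}, V^{\{n\}} \} = 0$. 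Note that no monotonicity of the sequence $V^{\{n\}}$ is needed, since every inclusion invoked in the argument will have one of the forms $V^{\{k\}}V^{\{\ell\}} \subseteq V^{\{k+\ell\}}$, $\{V^{\{k\}}, V^{\{\ell\}}\} \subseteq V^{\{k+\ell\}}$, or the defining relations $V\,V^{\{m\}} \subseteq V^{\{m+1\}}$ and $\{V, V^{\{m\}}\} \subseteq V^{\{m+1\}}$ of Definition~\ref{def:Pgrowth}.

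For the inductive step, fix $n \geq 1$, assume the statement for all pairs summing to $n-1$ (outer hypothesis) and for the pair $(a-1, b+1)$ (inner hypothesis), where $a \geq 1$ and $b = n - a$. I would expand $V^{\{a\}} = V V^{\{a-1\}} + \{ V, V^{\{a-1\}} \}$. Then $V^{\{a\}} V^{\{b\}}$ is spanned by terms $v u w$ and $\{v,u\}w$ with $v \in V$, $u \in V^{\{a-1\}}$, $w \in V^{\{b\}}$; the first lies in $V\,V^{\{n-1\}} \subseteq V^{\{n\}}$ by the outer hypothesis and the definition, and for the second I would apply the Leibniz rule $\{v, u\}\,w = \{ v, uw \} - u\,\{ v, w \}$: here $\{v, uw\} \in \{ V, V^{\{n-1\}} \} \subseteq V^{\{n\}}$ by the outer hypothesis (as $uw \in V^{\{n-1\}}$), while $u\,\{v,w\} \in V^{\{a-1\}} V^{\{b+1\}} \subseteq V^{\{n\}}$ by the inner hypothesis (as $\{v,w\} \in V^{\{b+1\}}$). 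The bracket $\{ V^{\{a\}}, V^{\{b\}} \}$ is handled the same way: expanding $V^{\{a\}}$ again, one family of terms is treated with the Leibniz rule $\{ vu, w \} = v\{u,w\} + u\{v,w\}$ and the other with the Jacobi identity $\{ \{v,u\}, w \} = \{ v, \{u,w\} \} - \{ u, \{v,w\} \}$; in each case every resulting summand lands either in $V\,V^{\{n-1\}}$ or $\{ V, V^{\{n-1\}} \}$ (via the outer hypothesis, since $\{u,w\} \in V^{\{n-1\}}$) or in $V^{\{a-1\}} V^{\{b+1\}}$ or $\{ V^{\{a-1\}}, V^{\{b+1\}} \}$ (via the inner hypothesis, since $\{v,w\} \in V^{\{b+1\}}$), all contained in $V^{\{n\}}$.

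The one point that requires care — and the reason for the nested rather than single induction — is the bookkeeping: after applying the Leibniz or Jacobi identity, exactly one summand (namely $u\{v,w\}$, resp. $\{u,\{v,w\}\}$) lands in $V^{\{a-1\}}V^{\{b+1\}}$, resp. $\{V^{\{a-1\}},V^{\{b+1\}}\}$, whose index sum is still $n$, so a plain induction on $a+b$ fails to close. Decreasing the \emph{first} index instead (at the cost of increasing the second) is what the inner induction on $a$ is designed to absorb, and it terminates at $a=0$, which is the trivial base case. The only other thing to get right is the signs in the Jacobi and Leibniz identities, which is routine. I do not anticipate any further obstacle.
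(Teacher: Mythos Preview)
Your proof is correct and follows essentially the same approach as the paper: expand $V^{\{a\}}$ via the defining recursion and use the Leibniz and Jacobi identities to push every resulting term into $V^{\{a+b\}}$. The only difference is the induction scheme --- the paper does a single induction on $a$ with $b$ arbitrary (so the hypothesis already covers both $(a-1,b)$ and $(a-1,b+1)$), which is slightly more economical than your nested induction, but the actual computations are identical.
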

\begin{proof}
We prove the lemma by induction on $a$.
By definition, the lemma holds for $a=1$ and for any $b$.
Suppose now that 
the lemma holds for all $a \leq c$ and for any $b$.
Then
\begin{align*}
V^{\{c+1\}} V^{\{b\}} + \{ V^{\{c+1\}}, V^{\{b\}}\} = &
V V^{\{c\}} V^{\{b\}} & + &
\{ V, V^{\{c\}}\} V^{\{b\}} &+ & 
\{ V V^{\{c\}}, V^{\{b\}} \} &+ &
\{\{ V, V^{\{c\}}\} ,V^{\{b\}}\} \\
& (1) && (2)& & (3) && (4)
\end{align*}

The inclusion $(1) \subseteq V^{\{c+b+1\}}$ is immediate by induction.
We have:
\[ (2) \subseteq V^{\{c\}} \{ V, V^{\{b\}}\} + \{ V, V^{\{c\}}V^{\{b\}}\} \subseteq V^{\{c\}} V^{\{b+1\}}+ \{ V, V^{\{c+b\}}\}, \]
\[ (3) \subseteq V \{ V^{\{c\}}, V^{\{b\}}\} + V^{\{c\}}\{V, V^{\{b\}}\}  \subseteq V V^{\{c+b\}} + V^{\{c\}} V^{\{b+1\}} ,\]
\[ (4) \subseteq \{ V, \{ V^{\{c\}}, V^{\{b\}}\}\} + \{ V^{\{c\}}, \{ V^{\{b\}}, V\}\} \subseteq \{V, V^{\{c+b\}}\} + \{ V^{\{c\}}, V^{\{b+1\}}\}.\]
All of these are contained in $V^{\{c+b+1\}}$ by induction.
\end{proof}

\begin{proposition}\label{prop:PGK1}
For any finite-dimensional spaces $V,W$ which generate $A$ as a Poisson algebra, we have $$\sG(\pd_V) = \sG(\pd_W).$$
\end{proposition}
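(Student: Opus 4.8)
The plan is to mimic the classical argument that the growth of a finitely generated algebra is independent of the generating subspace (\cite[Lemma~1.1]{KL}), but carried out in the Poisson setting using Lemma~\ref{lem:Ppowers} as the replacement for the submultiplicativity $V^a V^b \subseteq V^{a+b}$. By symmetry it suffices to show $\sG(\pd_V) \leq \sG(\pd_W)$, i.e.\ to find constants $c, m \in \NN$ with $\pd_V(n) \leq c\, \pd_W(mn)$ for all $n$.

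First I would record the basic monotonicity and ``telescoping'' properties of the construction: $V^{\{k\}} \subseteq V^{\{k+1\}}$ when $1 \in V$ (which we may assume after enlarging $V$ by a single dimension, harmlessly for growth), so that $\sum_{k=0}^n V^{\{k\}} = V^{\{n\}} \cup \kk$ up to the harmless constant term, and more importantly that Lemma~\ref{lem:Ppowers} gives $\bigl(\sum_{k=0}^a V^{\{k\}}\bigr)\bigl(\sum_{k=0}^b V^{\{k\}}\bigr) + \{\sum_{k=0}^a V^{\{k\}}, \sum_{k=0}^b V^{\{k\}}\} \subseteq \sum_{k=0}^{a+b} V^{\{k\}}$. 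Iterating, if $U := \sum_{k=0}^j V^{\{k\}}$ for a fixed $j$, then $U^{\{n\}} \subseteq \sum_{k=0}^{jn} V^{\{k\}}$, so $\pd_U(n) \leq \pd_V(jn)$; and conversely $\pd_V(n) \leq \pd_U(n)$ since $V \subseteq U$. This shows $\sG(\pd_V) = \sG(\pd_U)$ for any such ``thickening'' $U$ of $V$.

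Now the main step. Since $W$ generates $A$ as a Poisson algebra, $A = \bigcup_m W^{\{m\}}$ (using $1 \in W$), so the finite-dimensional space $V$ is contained in $\sum_{k=0}^{m} W^{\{k\}} =: W'$ for some $m$; likewise, enlarging $m$, we may assume $W \subseteq V$ is not needed — we only need one inclusion. Then for every $n$,
\[ V^{\{n\}} \subseteq (W')^{\{n\}} \subseteq \sum_{k=0}^{mn} W^{\{k\}}, \]
the first inclusion because $V \subseteq W'$ together with an easy induction showing $V^{\{k\}} \subseteq (W')^{\{k\}}$ when $V \subseteq W'$ (the defining operations are monotone in $V$), and the second by the iterated form of Lemma~\ref{lem:Ppowers} noted above. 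Summing over $k \leq n$ gives $\pd_V(n) \leq \pd_W(mn)$, which is exactly $\pd_V \leq^* \pd_W$ with $c = 1$. Swapping the roles of $V$ and $W$ gives the reverse inequality, hence $\pd_V \sim \pd_W$ and $\sG(\pd_V) = \sG(\pd_W)$.

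The only point requiring a little care — and the place I would be most careful in writing it up — is the handling of the unit: the inductive definition of $V^{\{n\}}$ does \emph{not} automatically give $V^{\{n\}} \subseteq V^{\{n+1\}}$ unless $\kk \subseteq V$, and the clean statement ``$\pd_U(n) \le \pd_V(jn)$ for $U = \sum_{k\le j}V^{\{k\}}$'' relies on $\sum_{k=0}^{a}V^{\{k\}}$ being closed under the relevant products in the way Lemma~\ref{lem:Ppowers} provides. I would therefore either assume throughout that all generating subspaces considered contain $1$ (noting this changes neither $\sG(\pd_V)$ nor the hypothesis of generating $A$ as a Poisson algebra, since adjoining $\kk$ to $V$ enlarges $\pd_V$ by at most an additive constant at each degree), or carry the unit explicitly through each inclusion. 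Beyond that bookkeeping, the argument is a routine transcription of \cite[Lemma~1.1]{KL}, so no genuine obstacle is expected.
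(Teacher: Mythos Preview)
Your proof is correct and follows essentially the same route as the paper: both arguments use that $V$ sits inside $\sum_{k=0}^{m} W^{\{k\}}$ for some $m$, then invoke Lemma~\ref{lem:Ppowers} inductively to conclude $V^{\{n\}} \subseteq \sum_{k=0}^{mn} W^{\{k\}}$ and hence $\pd_V(n) \leq \pd_W(mn)$. Your extra care about whether $1 \in V$ is harmless but not strictly needed, since $\pd_V$ is defined via $\sum_{k=0}^n V^{\{k\}}$ (which already contains $V^{\{0\}}=\kk$), so the paper's induction goes through without that assumption.
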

\begin{proof}
Since $V$ and $W$ generate $A$ as a Poisson algebra, there are positive integers $s,t$ so that
\[ V \subseteq \sum_{j=0}^s W^{\{j\}}, \quad W \subseteq \sum_{k=0}^t V^{\{k\}}.\]
By Lemma~\ref{lem:Ppowers} and induction, for all $n$ we have $V^{\{n\}} \subseteq \sum_{j=0}^{sn} W^{\{j\}}$, and so $\pd_V(n) \leq \pd_W(sn)$.
Likewise, $\pd_W(n) \leq \pd_V(tn)$ and so $\sG(\pd_V)= \sG( \pd_W)$.
\end{proof}

\begin{defn}\label{def:PGK}
If $A$ is a generated as a  Poisson algebra by some finite-dimensional subspace $V$, we define the {\em Poisson GK-dimension of $A$} to be 
\[ \PGK(A) =  \inf \{ \gamma: \sG(\pd_V) \leq \sP(\gamma)\} = \varlimsup \log_n \pd_V(n).\]
By Proposition~\ref{prop:PGK1}, this does not depend on the generating space $V$ chosen.

For an arbitrary  Poisson algebra we define 
$$\PGK(A):= \sup_{A'} \PGK(A'),$$ where the supremum is taken over all finitely generated Poisson subalgebras $A'$ of $A$.
\end{defn}

For Poisson algebras with a sufficiently nice filtration, we can compute Poisson GK-dimension from the growth of the filtration.
Let $A$ be an algebra, and let $\kk = A(0) \subseteq A(1) \subseteq \cdots$ be a filtration of $A$;
recall this means that the $A(i)$ are subspaces of $A$ so that $A(i)A(j) \subseteq A(i+j)$ for all $i,j \in \NN$.
We say the  filtration is {\em exhaustive} if $A = \bigcup A(n)$ and {\em finite} if $\dim A(n) < \infty$ for all $n$.
The filtration is {\em discrete} since $A(k) = (0) $ for all $k < 0$.

\begin{lemma}\label{lem:PGKfilter}
Let $A$ be a finitely generated Poisson algebra, discretely, finitely, and exhaustively filtered by $\kk = A(0) \subseteq A(1) \subseteq \cdots$.
Assume also that $\{A(n), A(m)\} \subseteq A(n+m)$ for all $n, m \in \NN$.
Then $$\PGK A \leq \varlimsup \log_n \dim A(n).$$

If there is $k$ so that 
\beq
\label{goodgrowth} A(n) \subseteq A(k)^{\{n\}} \quad \mbox{ for all $n$},
\eeq
 then $\sG(\dim A(n)) = \sG(\pd_{A(k)})$.  In particular, $\PGK A = \varlimsup \log_n \dim A(n)$.
\end{lemma}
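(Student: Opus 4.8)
The plan is to prove the two assertions of Lemma~\ref{lem:PGKfilter} in turn, reducing everything to comparing the filtration function $n \mapsto \dim A(n)$ with the Poisson-power function $\pd_{A(k)}$ via the relation $\leq^*$.

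First I would handle the inequality $\PGK A \leq \varlimsup \log_n \dim A(n)$. Since $A$ is finitely generated as a Poisson algebra, I may pick a finite-dimensional generating subspace; because the filtration is exhaustive, finite, and discrete, the generators all lie in some $A(k)$, so I may as well take $V = A(k)$ for suitable $k$. The key point is then that $V^{\{n\}} \subseteq A(kn)$ for all $n$: this follows by induction on $n$, using that $A(k)A(kn)\subseteq A(k(n+1))$ (the filtration is multiplicative) and $\{A(k), A(kn)\}\subseteq A(k(n+1))$ (the hypothesis $\{A(n),A(m)\}\subseteq A(n+m)$). Summing over $k=0,\dots,n$ gives $\pd_V(n) \leq \dim A(kn)$, hence $\pd_V \leq^* (n\mapsto \dim A(n))$, and taking $\varlimsup \log_n$ of both sides (using the second description of $\PGK$ in Definition~\ref{def:PGK} and the elementary fact that $f\leq^* g$ implies $\varlimsup\log_n f(n)\leq\varlimsup\log_n g(n)$) yields the claimed bound.

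Next, under the extra hypothesis \eqref{goodgrowth}, I would establish the reverse comparison $(n \mapsto \dim A(n)) \leq^* \pd_{A(k)}$. Indeed $A(n) \subseteq A(k)^{\{n\}} \subseteq \sum_{j=0}^n A(k)^{\{j\}}$, so $\dim A(n) \leq \pd_{A(k)}(n)$ directly, with no rescaling needed. Combined with the inclusion $A(k)^{\{n\}} \subseteq A(kn)$ from the first part (which gives $\pd_{A(k)}(n)\leq \dim A(kn)$, i.e. $\pd_{A(k)} \leq^* (n\mapsto \dim A(n))$), this shows the two monotone functions $n\mapsto\dim A(n)$ and $\pd_{A(k)}$ are $\sim$-equivalent, so $\sG(\dim A(n)) = \sG(\pd_{A(k)})$. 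Taking $\varlimsup\log_n$ then gives $\PGK A = \varlimsup\log_n\dim A(n)$, since $A(k)$ is a generating subspace for $A$ as a Poisson algebra (its Poisson powers exhaust $A$ because they exhaust every $A(n)$ by \eqref{goodgrowth}, and the union of the $A(n)$ is $A$).

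I do not expect a genuine obstacle here; the lemma is essentially the Poisson analogue of the standard fact \cite[Lemma~6.1]{KL} relating GK-dimension to a multiplicative filtration, and the only technical care needed is (i) checking that $A(k)$ really does generate $A$ as a Poisson algebra so that $\pd_{A(k)}$ computes $\PGK A$, and (ii) being careful that the inductive inclusion $A(k)^{\{n\}}\subseteq A(kn)$ uses both the algebra-multiplicativity and the Poisson-bracket-compatibility of the filtration. The one place to be slightly careful is that $\varlimsup\log_n$ is monotone and $\sim$-invariant, which is immediate from Definition~\ref{def:growth} but worth stating. Everything else is bookkeeping with the definitions.
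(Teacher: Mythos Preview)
Your proof is correct and follows essentially the same route as the paper's: show $V^{\{n\}}\subseteq A(pn)$ by induction using the multiplicativity and Poisson-compatibility of the filtration, and under \eqref{goodgrowth} combine this with the reverse inclusion $A(n)\subseteq A(k)^{\{n\}}$ to get $\sG(\dim A(n))=\sG(\pd_{A(k)})$. The only minor difference is that the paper proves the first inequality for an \emph{arbitrary} finite-dimensional $V\subseteq A(p)$ rather than restricting at once to a generating subspace $V=A(k)$; since $\PGK$ is computed from any generating subspace (Proposition~\ref{prop:PGK1}), your restriction is harmless.
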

\begin{proof}
Let $V$ be a finite-dimensional subspace of $A$. 
For some $p$ we have $V \subseteq A(p)$, and it follows that $V^{\{n\}} \subseteq A(pn)$ for all $n$.
Thus $\sG(\pd_V) \leq \sG(\dim A(n))$ and the first inequality follows.

Now suppose that 
\eqref{goodgrowth} holds for $k$.
Clearly $A(k)$ generates $A$ as a Poisson algebra.
By the first paragraph and \eqref{goodgrowth},  $\sG(\dim A(n)) = \sG(\pd_{A(k)})$. The final statement follows.
\end{proof}

If the filtration $A(n)$ on $A$ satisfies \eqref{goodgrowth} for some $k$, we say that $A$ has {\em good growth} with respect to the filtration.  

\begin{remark}\label{rem:fg}
If $A$ is finitely generated as an algebra, then  $A(n) \subseteq A(k)^n$ for all $n$ (for some $k$)
 \cite[Lemma 6.1]{KL}. However,  \eqref{goodgrowth} does not seem to follow  from $A$ being finitely generated as a Poisson algebra without extra conditions; see Proposition~\ref{prop:GKW1} and Remark~\ref{rem:goodgrowth}.
\end{remark}
 
To end the subsection, we note that if $A$ is a finitely generated Poisson algebra, then the Poisson GK-dimension of $A$ is also the GK-dimension of $A$ as a module over a certain ring of differential operators.  
We refer to \cite[Chapter~5]{KL} for definitions, see also Proposition~\ref{prop:PGK2}.

If $v \in A$, define $\del_v := \{v, -\}$.  This is a derivation of $A$.

\begin{proposition}\label{prop:PGK2}
Let $A$ be generated as a Poisson algebra by a finite-dimensional subspace $V$.
Let $$D := A \ang{\del_v\ : \ v \in V},$$ considered as a subalgebra of the ring $D(A)$ of differential operators on $A$.
Note that $A$ has a natural left  $D$-module structure. Then $\PGK A = \GK_D A$.
\end{proposition}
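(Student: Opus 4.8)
The plan is to realise $A$ as a cyclic $D$-module, transport the Poisson filtration $\pd_V$ of $A$ to the standard algebra filtration on $D$, and then read the equality off from the two growth functions. Throughout I write $m_a\in D(A)$ for the operator of multiplication by $a\in A$, so that the copy of $A$ inside $D(A)$ is $\{m_a: a\in A\}$, and $\del_v=\{v,-\}$ is the derivation attached to $v\in V$. The one genuinely substantive point, which I expect to be the main obstacle, is that $D$ \emph{a priori} contains the commutative algebra $A$, which need not be finitely generated as an algebra (only as a Poisson algebra); so it is not obvious that $D$ is finitely generated, nor that $\GK_D A$ is an honest module GK-dimension. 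The identity $[\del_v,m_a]=m_{\{v,a\}}$ in $D(A)$, together with the Poisson-generation hypothesis on $V$, is exactly what rescues this.

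\emph{Step 1: $D$ is a finitely generated $\kk$-algebra.} Fix a basis $v_1,\dots,v_r$ of $V$ and put
\[ U:=\kk\cdot 1\ \oplus\ \spn\{m_{v_1},\dots,m_{v_r}\}\ \oplus\ \spn\{\del_{v_1},\dots,\del_{v_r}\}\subseteq D,\]
a finite-dimensional subspace with $1\in U$. Let $D_0:=\kk\langle U\rangle\subseteq D$ and $S:=\{a\in A: m_a\in D_0\}$. Since $m_{ab}=m_am_b$, the set $S$ is a subalgebra of $A$ containing $V$; and since $\del_v$ is a derivation of $A$ one has $[\del_v,m_a]=m_{\{v,a\}}$ for all $v\in V$ and $a\in A$, so $S$ is closed under $\{v,-\}$ for each $v\in V$. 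As $A$ is generated by $V$ as a Poisson algebra, $A=\sum_n V^{\{n\}}$ (Lemma~\ref{lem:Ppowers}), and an induction on $n$ using $V^{\{n+1\}}=VV^{\{n\}}+\{V,V^{\{n\}}\}$ gives $V^{\{n\}}\subseteq S$ for all $n$; hence $S=A$. Therefore every $m_a$ lies in $D_0$, and since $D$ is generated by the $m_a$ together with $\del_{v_1},\dots,\del_{v_r}\in U$, we get $D=D_0=\kk\langle U\rangle$.

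\emph{Step 2: matching $U^n\cdot 1$ with $\pd_V$.} Since $m_a\cdot 1=a$, the module $A$ is generated over $D$ by the single element $1$; take $F:=\kk\cdot 1$ as generating subspace. Because $1\in U$ we have $\sum_{k=0}^n U^k=U^n$, and I claim $U^n\cdot 1=\sum_{k=0}^n V^{\{k\}}$ for all $n\in\NN$. For $n=0$ both sides equal $\kk$. For the inductive step, note that for any subspace $W\subseteq A$ one has $U\cdot W=W+VW+\{V,W\}$; applying this with $W=\sum_{k\leq n}V^{\{k\}}$ and using $VV^{\{k\}}+\{V,V^{\{k\}}\}=V^{\{k+1\}}$ yields
\[ U^{n+1}\cdot 1=U\cdot\Big(\sum_{k\leq n}V^{\{k\}}\Big)=\sum_{k\leq n}V^{\{k\}}+\sum_{k\leq n}V^{\{k+1\}}=\sum_{k\leq n+1}V^{\{k\}}.\]
Consequently $\dim\big((\sum_{k\leq n}U^k)F\big)=\dim\big(\sum_{k\leq n}V^{\{k\}}\big)=\pd_V(n)$ for all $n$.

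\emph{Step 3: conclusion.} By Step 1 the algebra $D$ is finitely generated, and by Step 2 the $D$-module $A$ is cyclic, hence finitely generated; so by \cite[Chapter~5]{KL} the quantity $\GK_D A$ is well defined and independent of the choice of finite-dimensional generating subspaces of $D$ and of $A$, and may therefore be computed with $U$ and $F$ above:
\[ \GK_D A=\varlimsup\log_n\dim\Big(\Big(\sum_{k\leq n}U^k\Big)F\Big)=\varlimsup\log_n\pd_V(n)=\PGK A.\]
As indicated, Step 1 is where the work lies; once it is in place, Step 2 is a direct computation and Step 3 is bookkeeping with the definitions of Section~\ref{DIMENSION}.
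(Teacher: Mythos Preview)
Your proof is correct and follows essentially the same approach as the paper's: both take the finite-dimensional subspace $V+\del_V$ (augmented by $1$), show via the commutator identity $[\del_v,m_a]=m_{\{v,a\}}$ that it generates $D$, and then verify by induction that its $n$-th power applied to $1\in A$ is exactly $\sum_{k\le n}V^{\{k\}}$. The paper is terser---it absorbs $1$ into $V$ and proves the inclusion $V^{\{n\}}\subseteq W^n$ inside $D$ first---but the content is the same.
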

\begin{proof}
We write the action of $D$ on $A$ as $D \cdot A$.
Inside $D$, our convention is that $\del_v x = x \del_v + \del_v(x)$ for all $v \in V$, $x \in A$.
Let $\del_V  = \{ \del_v: v \in V\}$.

Without loss of generality, we may assume that $1 \in V$.  
Let $W = V +  \del_V \subseteq D$.  
We claim that $W^n  \supseteq V^{\{n\}}$ for all $n$.  To see this assume that it holds for $n$.
Then 
\[ W^{n+1} \supseteq V W^n + \del_V W^n + W^n \del_V \supseteq V V^{\{n\}} + \{ \del_v (x)  \ : \ v \in V, x \in V^{\{n\}} \} = V^{\{n+1\}}.\]
Since $A = \bigcup V^{\{n\}}$ and $D = A \ang{\del_V}$ we have that $W$ generates $D$ as a $\kk$-algebra.

Note that for any $X \subseteq A$ we have $W \cdot X = VX + \{ V, X\}$, and so $W^n \cdot \kk = V^{\{n\}}$.
Thus 
\[ \GK_D A =\varlimsup \log_n \dim W^n \cdot \kk =  \varlimsup \log_n \dim V^{\{n\}} = \PGK A,\]
by \cite[page 51]{KL}.
\end{proof}

\subsection{Relating GK-dimension and Poisson GK-dimension}

Let $R$ be a finitely and discretely filtered ring so that the associated graded ring $\gr R$ is finitely generated.  
It is standard that $\GK R = \GK (\gr R)$; see \cite[Proposition~6.6]{KL}.  We wish to use a similar technique to understand the GK-dimension of quotients of $\U(W_+)$.  Unfortunately, $\gr \U(W_+) = \S(W_+)$ is not finitely generated as an algebra; however, it is finitely generated as a Poisson algebra, and we will show that we can relate the GK-dimension of (a quotient of) $\U(W_+)$ and the Poisson GK-dimension of the associated graded ring.  

\begin{definition} 
Let $R$ be a finitely generated $\kk$-algebra together with a filtration 
\beq \label{filter}
(0)\subseteq R(0)\subseteq R(1)\subseteq R(2)\subseteq\cdots
\eeq
  that is discrete, finite, and exhaustive.  (Recall that these terms were defined before Lemma~\ref{lem:PGKfilter}.) 
Then $R$ is {\em almost commutative} with respect to this filtration if  $[r_i, r_j]\in R(i+j-1)$ for all $i, j\ge 0$ and $r_i\in R(i), r_j\in R(j)$.\end{definition}

In this subsection we consider an algebra $R$ that is almost commutative with respect to a discrete, finite, exhaustive filtration as in \eqref{filter}.
Let $$A:= \gr R = \bigoplus_{n \geq 0} R(n)/R(n-1).$$
It is standard that $A$ is a graded ring with $A_n:= R(n)/R(n-1)$.
If $r \in R(n) \ssm R(n-1)$, we write $$\gr(r): = r + R(n-1) \in A_n.$$
Since $R$ is almost commutative,  $A$ is commutative and carries a well-defined Poisson bracket: if  $\gr(r) \in A_m$, $\gr(s) \in A_m$, then
\beq \label{bracket} 
 \{ \gr(r), \gr(s)\}  = \begin{cases} 0 & \mbox{if $[r,s] \in R(n+m-2)$} \\
 \gr([r,s]) & \mbox{else}.
 \end{cases}
\eeq
 Thus $$\{ A_m, A_n \} \subseteq A_{n+m-1}.$$ 
Let $A(n) = \gr R(n) = \bigoplus_{k=0}^n A_k$.
Since the filtration is discrete, $\dim A(n) =\dim R(n)$.

We have
\begin{lemma}\label{lem:trivbrack} Let $R$ be an algebra that is almost commutative with respect to a discrete, finite, exhaustive filtration as above.  For any subsets $X,Y \subseteq R$ we have $$\gr[X, Y] \supseteq \{ \gr X, \gr Y\}.$$
\end{lemma}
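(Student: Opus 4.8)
The plan is to reduce directly to the defining formula \eqref{bracket} for the Poisson bracket on $A=\gr R$. Throughout, I read $[X,Y]$, $\{\gr X,\gr Y\}$ and $\gr[X,Y]$ as the evident $\kk$-subspaces: $[X,Y]$ is spanned by the commutators $[x,y]$ with $x\in X$, $y\in Y$; $\{\gr X,\gr Y\}$ is spanned by the Poisson brackets $\{\gr(x),\gr(y)\}$; and for a subspace $U\subseteq R$, $\gr U$ is the associated graded subspace of $A$, spanned by the leading terms of the nonzero elements of $U$. With this reading it suffices to show that each spanning element $\{\gr(x),\gr(y)\}$ of $\{\gr X,\gr Y\}$ lies in $\gr[X,Y]$, and I would fix $x\in X$, $y\in Y$ and verify this.

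The case $x=0$ or $y=0$ is trivial: then $\{\gr(x),\gr(y)\}=0$, which lies in any subspace. So assume $x,y\neq 0$, and let $n$ and $m$ be their filtration degrees, so that $\gr(x)\in A_n$ and $\gr(y)\in A_m$. By almost commutativity, $[x,y]\in R(n+m-1)$. Now apply \eqref{bracket} with $r=x$, $s=y$. If $[x,y]\notin R(n+m-2)$, then $\{\gr(x),\gr(y)\}=\gr([x,y])$ by definition; since $[x,y]\in[X,Y]$, its leading term $\gr([x,y])$ lies in $\gr[X,Y]$, and we are done. If instead $[x,y]\in R(n+m-2)$, then \eqref{bracket} gives $\{\gr(x),\gr(y)\}=0\in\gr[X,Y]$. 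This covers all cases.

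I do not expect any real obstacle: the lemma is essentially a transcription of how the Poisson bracket on $\gr R$ is manufactured from the commutator on $R$. The only subtlety — and the reason the inclusion is one-directional — is the second case above, where $[x,y]$ unexpectedly drops below degree $n+m-1$: there $\gr([x,y])$ can be a nonzero element of $\gr[X,Y]$ while the corresponding Poisson bracket $\{\gr(x),\gr(y)\}$ vanishes, so a leading term of a commutator need not be realisable as a Poisson bracket of leading terms.
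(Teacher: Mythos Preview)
Your proof is correct and follows the same approach as the paper: the paper's proof simply says ``We can reduce the statement to the case $\dim X = \dim Y = 1$, which is given by \eqref{bracket},'' and you have spelled out exactly this reduction and the case analysis from \eqref{bracket} in full detail.
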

\begin{proof}
We can reduce the statement to the case $\dim X = \dim Y  = 1$, which is given by \eqref{bracket}.
\end{proof}

Our main result on Poisson GK-dimension is the following:
\begin{proposition}\label{prop:GKandPGK}
Let $R$ be an algebra that is almost commutative with respect to the discrete, finite, exhaustive filtration \eqref{filter}, and let $A = \gr R$ with $A(n) = \gr R(n)$. Then $$\GK(R) \geq \PGK(A).$$
If $A$ has good growth with respect to the filtration $\{A(n)\}$, that is if \eqref{goodgrowth} holds for some $k$, 
then $$\GK(R) = \PGK(A) = \varlimsup \log_n \dim A(n) $$
\end{proposition}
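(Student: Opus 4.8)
The plan is to prove the two assertions of Proposition~\ref{prop:GKandPGK} separately, with the inequality $\GK(R) \geq \PGK(A)$ first, and then the reverse inequality (and the explicit formula) under the good-growth hypothesis.

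\textbf{Step 1: the inequality $\GK(R) \geq \PGK(A)$.}
Since $R$ is finitely generated, fix a finite-dimensional generating subspace; since the filtration is exhaustive and finite, we may enlarge it so that $R$ is generated by $R(k)$ for some $k$, and then $R(k)^n \supseteq R(n')$ for $n' \leq $ (something linear in $n$) --- more precisely, by \cite[Lemma~6.1]{KL}, $R(n) \subseteq R(k)^n$ after adjusting $k$. It is standard that $\GK(R) = \varlimsup \log_n \dim R(n)$ for such a filtration, using $\dim R(n) \leq \dim R(k)^n \leq \dim(\sum_{j \leq kn} R(k)^j)$ and $\dim R(n) \geq \dim R(k)^{\lfloor n/k \rfloor}$ (informally). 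Now let $A = \gr R$. Since the filtration is discrete, $\dim A(n) = \dim R(n)$, so $\varlimsup \log_n \dim A(n) = \GK(R)$. On the other hand, Lemma~\ref{lem:PGKfilter} applied to the Poisson algebra $A$ with its filtration $\{A(n)\}$ (which satisfies $\{A(n), A(m)\} \subseteq A(n+m)$ because $\{A_i, A_j\} \subseteq A_{i+j-1}$ by \eqref{bracket}) gives $\PGK(A) \leq \varlimsup \log_n \dim A(n) = \GK(R)$. This handles the first assertion. The one subtlety is that $A$ need not be finitely generated as an algebra, but $A$ \emph{is} finitely generated as a Poisson algebra: $\gr R(k)$ generates $A$ as a Poisson algebra because $R(k)$ generates $R$ as an algebra and $\gr[X,Y] \supseteq \{\gr X, \gr Y\}$ by Lemma~\ref{lem:trivbrack}; so Lemma~\ref{lem:PGKfilter} does apply.

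\textbf{Step 2: the reverse inequality under good growth.}
Assume \eqref{goodgrowth} holds for some $k$, i.e. $A(n) \subseteq A(k)^{\{n\}}$ for all $n$. By the last statement of Lemma~\ref{lem:PGKfilter}, this forces $\sG(\dim A(n)) = \sG(\pd_{A(k)})$, hence $\PGK(A) = \varlimsup \log_n \dim A(n) = \GK(R)$ by Step~1. This already gives the chain of equalities $\GK(R) = \PGK(A) = \varlimsup \log_n \dim A(n)$, so the second assertion is essentially immediate once Step~1 is in place --- the real content is packaged inside Lemma~\ref{lem:PGKfilter}, and here we are just invoking it.

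\textbf{Main obstacle.}
The genuinely delicate point is the first assertion, specifically the claim that $\GK(R) = \varlimsup \log_n \dim R(n)$ for an almost-commutative, finitely generated, discretely/finitely/exhaustively filtered ring whose associated graded ring need not be finitely generated as an algebra. The usual statement \cite[Proposition~6.6]{KL} assumes $\gr R$ finitely generated; here we only know $\gr R$ finitely generated \emph{as a Poisson algebra}, which is weaker. The resolution is that finite generation of $\gr R$ as an algebra is not actually needed for this step: the equality $\GK(R) = \varlimsup \log_n \dim R(n)$ depends only on $R$ being finitely generated and the filtration being discrete, finite, and exhaustive, since one can pass to the standard filtration by powers of a finite-dimensional generating space and compare it with $\{R(n)\}$ using \cite[Lemma~6.1]{KL}. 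I would state this comparison carefully (it is the one place where almost-commutativity and discreteness are used to identify $\dim R(n)$ with $\dim A(n)$), and then the rest is bookkeeping with the $\leq^*$ relation and Lemma~\ref{lem:PGKfilter}. I expect no surprises in Step~2.
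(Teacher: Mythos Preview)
There is a genuine gap in Step~1. Your route to the inequality $\PGK(A)\leq\GK(R)$ factors through the claim
\[
\GK(R)=\varlimsup_n \log_n \dim R(n),
\]
and you assert this follows from $R$ being finitely generated together with \cite[Lemma~6.1]{KL}. It does not. Take $R=\kk[x]$ with the filtration $R(0)=\kk$ and $R(n)=\{f:\deg f\leq 2^{n}\}$ for $n\geq 1$. This is a discrete, finite, exhaustive algebra filtration (check: $2^{m}+2^{n}\leq 2^{m+n}$ for $m,n\geq 1$), and $R$ is commutative hence trivially almost commutative and finitely generated. But $\GK(R)=1$ while $\varlimsup_n\log_n\dim R(n)=\varlimsup_n\log_n(2^{n}+1)=\infty$. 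So your chain $\PGK(A)\leq\varlimsup\log_n\dim A(n)=\varlimsup\log_n\dim R(n)\,``{=}"\,\GK(R)$ breaks at the last step: only the inequality $\GK(R)\leq\varlimsup\log_n\dim R(n)$ is automatic. Your auxiliary claim that $A$ is finitely generated as a Poisson algebra is likewise unjustified (and false in the example above, since the bracket is trivial and $\dim A_n=2^{n-1}$); note also that the containment in Lemma~\ref{lem:trivbrack} points the wrong way for that purpose.

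The paper avoids this entirely by comparing growth \emph{directly}: for each finite-dimensional $V\subseteq A$ it lifts to $W\subseteq R$ with $\gr W\supseteq V$, proves by induction (using Lemma~\ref{lem:trivbrack}) that $V^{\{n\}}\subseteq\gr W^{n}$, and concludes $\pd_V(n)\leq \dd_W(n)$; taking suprema yields $\PGK(A)\leq\GK(R)$ with no appeal to $\dim R(n)$. The equality $\GK(R)=\varlimsup\log_n\dim R(n)$ is then \emph{derived} from good growth in the second part (by lifting $A(n)\subseteq A(k)^{\{n\}}$ to $R(n)\subseteq R(k)^{n}$), not assumed at the outset. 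Your Step~2 therefore also needs that lifting argument rather than a reference back to Step~1.
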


\begin{proof}
Let $V$ be a finite-dimensional subspace of $A$, with $1 \in V$.
Choose a finite-dimensional subspace $W$ of $R$, with $1 \in W$, so that $\gr W \supseteq V$.

We claim that $V^{\{n\}} \subseteq \gr W^n$ for all $n$.  
The claim is true for $n=1$; assume that it holds for $n$.  
Then
\begin{align*}
 V^{\{n+1\}}  & = V V^{\{n\}} + \{ V, V^{\{n\}} \} \\
  & \subseteq (\gr W)(\gr W^n) + \{ \gr W, \gr W^n \} \quad \mbox{ by induction} \\
   & \subseteq \gr W^{n+1} + \gr [W, W^n] \quad \mbox{ by Lemma~\ref{lem:trivbrack}.}
   \end{align*}
  Since $[W, W^n] \subseteq W^{n+1}$, the claim is proved.
  
Now $\dim V^{\{n\}} \leq \dim \gr W^n = \dim W^n$ (since $R$ is discretely filtered). 
Then we have $\sG(\pd_V) \leq \sG(\dd_W)$, so

\[ \varlimsup \log_n \pd_V(n) \leq \varlimsup \log_n \dd_W(n).\]
Taking the supremum over all $V$ and $W$, we obtain that $\PGK A \leq \GK R$.
  
  Assume now that \eqref{goodgrowth} holds for $k$, and let $V= A(k)$ and $W = R(k)$.  
    We claim that $W$ generates $R$ as an algebra; in fact, we claim that $R(n) \subseteq W^n$ for all $n$.
    This is clearly true for $n \leq k$.
       Let $r \in R(n) \ssm R(n-1)$.
    We have $\gr r \in V^{\{n\}} \subseteq \gr W^n$ and so there is $w \in W^n \cap R(n)$ with $r-w \in R(n-1)$.
    By induction, $r - w \in  W^{n-1} \subseteq W^n$ so $r \in W^n $.
    
    Since $W^n \subseteq R(nk)$ we have  
    \[ \varlimsup \log_n \dim R(n) =\varlimsup \log_n \dd_W = \GK R .\]
  But by 
   Lemma~\ref{lem:PGKfilter}, 
  \[ \varlimsup \log_n \dim R(n) = \varlimsup \log_n \dim A(n) = \PGK A,\]
  completing the proof.
    \end{proof}

  \subsection{Consequences for quotients of $\U(W_+)$ and $\S(W_+)$}
  
  We now apply our previous results to quotients of $\U(W_+)$ and $\S(W_+)$.
  First consider a quotient of $\S(W_+)$ by a radical Poisson ideal.
  
 \begin{theorem}\label{thm:GKW3}
 Let $K$ be a nonzero radical Poisson ideal of $\S(W_+)$ and let $A = \S(W_+)/K$.
 Then $$\PGK A = \GK A < \infty.$$
 \end{theorem}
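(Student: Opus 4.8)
The plan is to establish the two inequalities $\PGK A\ge\GK A$ and $\PGK A\le\GK A$ separately, the first being essentially free. For any commutative Poisson algebra $A$ and any subspace $V$ one has $V^{\{n\}}\supseteq VV^{\{n-1\}}\supseteq\cdots\supseteq V^{n}$, so $\pd_V(n)\ge\dd_V(n)$; taking suprema over finite-dimensional generating subspaces gives $\PGK A\ge\GK A$ for every Poisson algebra, in particular for $A=\S(W_+)/K$. Finiteness of $\GK A$ is Corollary~\ref{cor1}. Thus the entire content of the theorem is the bound $\PGK A\le\GK A$, and I would prove this by first reducing to the case in which $K$ is prime.

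For the reduction, use Corollary~\ref{Cns}: since $K$ is radical, $K=\mf p_1\cap\cdots\cap\mf p_m$ for finitely many nonzero Poisson primes $\mf p_i$, and the diagonal map is a Poisson embedding $A\hookrightarrow\bigoplus_{i=1}^m\S(W_+)/\mf p_i$. Each $\S(W_+)/\mf p_i$ --- hence the finite direct sum, hence $A$ --- is finitely generated as a Poisson algebra, because $x_1,x_2$ Poisson-generate $\S(W_+)$ (as $[e_1,e_n]=(n-1)e_{n+1}$). The two routine facts that $\PGK$ does not increase on passing to a Poisson subalgebra and that $\PGK$ of a finite direct sum is the maximum of the $\PGK$ of its factors --- both proved exactly as for ordinary GK-dimension in \cite[Chapter~1]{KL}, using that the product and bracket on a direct sum are componentwise --- then yield $\PGK A\le\max_i\PGK(\S(W_+)/\mf p_i)$. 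On the $\GK$ side, Corollary~\ref{cor2} gives $\GK A=\Kdim A=\max_i\Kdim(\S(W_+)/\mf p_i)=\max_i\GK(\S(W_+)/\mf p_i)$. Hence it suffices to treat the case $K=\mf p$ prime.

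So assume $K=\mf p$ is prime and set $A=\S(W_+)/\mf p$. By Lemma~\ref{Lns}(b,c) there is a finitely generated subalgebra $B$ of $A$ and an element $p\in B$ with Poisson inclusions $B\subseteq A\subseteq B[p^{-1}]$, where $B[p^{-1}]$ carries the localized Poisson structure of Corollary~\ref{cor:comp2}. Now $B[p^{-1}]\cong B[x]/(xp-1)$ is finitely generated as an algebra, hence as a Poisson algebra, and $A$ is a Poisson subalgebra of it, so $\PGK A\le\PGK B[p^{-1}]$. The crux is the following assertion, which I would isolate as a lemma: \emph{if $C$ is a commutative Poisson algebra that is finitely generated as an algebra, then $\PGK C=\GK C$.} To prove it, pick a finite-dimensional $V$ with $1\in V$ generating $C$ as an algebra; since $\{V,V\}$ is finite-dimensional and $C=\bigcup_k V^k$, there is $m\ge1$ with $\{V,V\}\subseteq V^{m}$. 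One then checks $V^{\{n\}}\subseteq V^{mn}$ for all $n$ by induction: in $V^{\{n+1\}}=VV^{\{n\}}+\{V,V^{\{n\}}\}$ the first summand lies in $V^{mn+1}\subseteq V^{m(n+1)}$, and expanding $\{v,v_1\cdots v_k\}$ (with $v,v_j\in V$, $k\le mn$) by the Leibniz rule into a sum of terms $v_1\cdots\widehat{v_j}\cdots v_k\,\{v,v_j\}\in V^{mn-1}\cdot V^{m}\subseteq V^{m(n+1)}$ handles the second. Hence $\pd_V(n)\le\dim V^{mn}=\dd_V(mn)$, so $\sG(\pd_V)\le\sG(\dd_V)$ and $\PGK C\le\GK C$; the reverse inequality is the free one. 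Applying this with $C=B[p^{-1}]$ and combining with Proposition~\ref{Ldpr}(b,d) and Corollary~\ref{cor2},
\[ \PGK A\le\PGK B[p^{-1}]=\GK B[p^{-1}]=\Kdim B[p^{-1}]=\Kdim B=\Kdim A=\GK A, \]
which with $\PGK A\ge\GK A$ finishes the prime case, hence the theorem; finiteness is Corollary~\ref{cor1}.

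I expect the main obstacle to be the isolated lemma, specifically the growth estimate $V^{\{n\}}\subseteq V^{mn}$ --- that is, controlling how fast iterated Poisson brackets can inflate degree in a finitely generated commutative Poisson algebra. Everything else is bookkeeping with results already established in the excerpt (Corollaries~\ref{Cns}, \ref{cor1}, \ref{cor2}, Lemma~\ref{Lns}, Proposition~\ref{Ldpr}) together with the standard subalgebra and direct-sum behaviour of $\PGK$, which mirrors the classical theory of \cite{KL}. One minor point to watch: $\PGK$ of a non-finitely-generated Poisson algebra is defined as a supremum, so at each step one should confirm that the algebras in play ($A$, the direct sum, and $B[p^{-1}]$) are finitely generated \emph{as Poisson algebras}, so that their Poisson GK-dimension is computed directly from a single generating subspace; this was noted above for each of them.
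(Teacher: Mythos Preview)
Your proof is correct and takes a genuinely different route from the paper's. Both arguments share the reduction to the prime case via Corollary~\ref{Cns} and the use of Lemma~\ref{Lns}(b,c) to sandwich $A$ between a finitely generated algebra $B$ and its localisation $B[p^{-1}]$. The divergence is in how the upper bound on $\PGK$ is obtained. The paper interprets $\PGK A$ as the module GK-dimension $\GK_D A$ over the ring $D=A\langle\partial_1,\partial_2\rangle$ (Proposition~\ref{prop:PGK2}), passes to the fraction field $L=Q(A)$, and then invokes a result of Smith on differential operators (Proposition~\ref{prop:Smithcor}) to conclude $\GK_{D(L)}L=\trdeg L=\Kdim A$. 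Your argument instead proves directly, via the Leibniz rule, that any Poisson algebra $C$ that is finitely generated \emph{as an associative algebra} satisfies $\PGK C=\GK C$: the key estimate $V^{\{n\}}\subseteq V^{mn}$ (where $\{V,V\}\subseteq V^m$) is exactly the growth control the paper obtains indirectly through Smith's theorem. This is more elementary and entirely self-contained --- it avoids the differential-operator machinery and the careful choice of smooth affine models in the proof of Proposition~\ref{prop:Smithcor}. What the paper's approach buys in exchange is a conceptual link to the established theory of $D$-modules and GK-dimension over rings of differential operators, which may be useful for extensions; your approach buys a shorter, more transparent proof of this particular theorem. The subsidiary facts you flag (monotonicity of $\PGK$ under Poisson subalgebras, behaviour under finite direct sums) are indeed routine, proved exactly as for ordinary GK-dimension because the Poisson product and bracket on a direct sum are componentwise.
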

To prove Theorem~\ref{thm:GKW3} we need the following result.
\begin{proposition}\label{prop:Smithcor}
Let $L$ be a finitely generated field extension of $\kk$ of transcendence degree $n$,
and let $D(L)$ be the ring of $\kk$-linear differential operators on $L$.  Then $$\GK_{D(L)}(L) = n.$$
\end{proposition}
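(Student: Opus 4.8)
The statement $\GK_{D(L)}(L) = n$ for a finitely generated field extension $L/\kk$ of transcendence degree $n$ is essentially a theorem of S.P. Smith (and is implicit in Bernstein's inequality / the theory of holonomic modules over Weyl algebras); the plan is to reduce to the rational function field and then quote the classical computation. First I would fix a separating transcendence basis $t_1, \dots, t_n$ of $L/\kk$ (possible since $\kk$ has characteristic zero), so that $L$ is a finite separable extension of the purely transcendental subfield $\kk(t_1, \dots, t_n)$. The derivations $\partial/\partial t_i$ on $\kk(t_1,\dots,t_n)$ extend uniquely to commuting $\kk$-linear derivations $\partial_1, \dots, \partial_n$ of $L$, and one checks that $D(L) = L\ang{\partial_1, \dots, \partial_n}$: indeed $D(L)$ is a free left $L$-module with basis the ordered monomials in the $\partial_i$, because any differential operator is determined by its action and $L$ is étale over $\kk(\underline t)$. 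This is the structural input that replaces the need to reprove anything about rings of differential operators from scratch.

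Next I would do the GK-dimension computation. Choose a finite-dimensional generating subspace of $D(L)$ of the form $F + F\partial_1 + \cdots + F\partial_n$ where $F$ is a finite-dimensional $\kk$-subspace of $L$ generating $L$ as a field (and containing $1$); filtering $D(L)$ by powers of this space and computing $\dim_\kk W^m \cdot \kk$ — equivalently applying the definition of $\GK_D$ used in Proposition~\ref{prop:PGK2} — one gets, by the Leibniz rule, that applying a degree-$\leq m$ differential operator to $1$ lands in an $L$-span of bounded size times the space of polynomials of degree $\leq m$ in $n$ commuting variables acting on a fixed finitely generated subalgebra. The upper bound $\GK_{D(L)}(L) \leq n$ follows because the $m$-th filtered piece has $\kk$-dimension growing like $m^n$ (polynomial ring in $n$ variables over a finitely generated $\kk$-algebra). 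For the lower bound $\GK_{D(L)}(L) \geq n$ one exhibits $n$ algebraically independent elements whose images under products of the $\partial_i$ remain linearly independent — concretely, inside $D(L)$ one finds a copy of the $n$-th Weyl algebra $A_n(\kk)$ acting faithfully on $\kk[t_1,\dots,t_n] \subseteq L$, and $\GK_{A_n}(\kk[t_1,\dots,t_n]) = n$ is the classical Bernstein computation \cite[Chapter~5]{KL}.

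The main obstacle, I expect, is the passage from the rational function field to a finite separable extension $L$: one must verify that enlarging to $L$ does not change the GK-dimension, i.e. that $L$ is "holonomic of the same dimension" over $D(L)$ as $\kk(\underline t)$ is over its Weyl algebra. The clean way to handle this is to observe that $L$ is finite-dimensional as a module over $\kk(\underline t)$, that $D(L) \supseteq D(\kk(\underline t))$ via the extension of derivations, and that $L$ restricted to $D(\kk(\underline t))$ is a finite direct sum of copies of $\kk(\underline t)$ up to finite-dimensional-over-the-field adjustments — so the growth of the $D(L)$-filtration applied to $1 \in L$ matches that of the $D(\kk(\underline t))$-filtration applied to $1 \in \kk(\underline t)$ up to the $\leq^*$ equivalence of Definition~\ref{def:growth}. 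Alternatively, and most economically, I would simply cite Smith's result on $\GK$ of fields of differential operators directly and give only the sketch above as motivation; given that the paper already freely cites \cite{KL} for the analogous Weyl-algebra facts, a one-line appeal to the literature for Proposition~\ref{prop:Smithcor} is in keeping with the surrounding style, with the separating-transcendence-basis reduction recorded for completeness.
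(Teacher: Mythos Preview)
Your plan is a reasonable alternative to the paper's argument, but it differs in route and is incomplete at the crucial step.

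The paper does \emph{not} work with a separating transcendence basis of $L$ and reduce to the rational function field. Instead it passes to finitely generated subalgebras $A\subseteq L$ with $Q(A)=L$: by generic smoothness (char $0$) one may take $\Spec A$ smooth, and after a further localisation one may take the derivation module $\Delta_A$ to be \emph{free} of rank $n$ over $A$. Then Smith's structure theorem gives $D(A)=A[\Delta_A]$, and the key computational input is \cite[Theorem~4.4]{Smith}: for a suitable finite-dimensional $V\subseteq A$ and $W=V\oplus C$ (with $C$ spanned by an $A$-basis of $\Delta_A$), one has the exact equality $W^kV=V^k$ inside $A$. This single identity delivers both inequalities at once, giving $\GK_{D}A''=\GK A''=n$. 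Your approach, by contrast, separates the two bounds and handles the finite extension $L/\kk(\underline t)$ as an afterthought.

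Your lower bound via $A_n(\kk)\subseteq D(L)$ acting on $\kk[t_1,\dots,t_n]\subseteq L$ is fine. The gap is in the upper bound. The sentence ``lands in an $L$-span of bounded size times the space of polynomials of degree $\leq m$ \dots\ acting on a fixed finitely generated subalgebra'' is not a proof: since $L$ is infinite-dimensional over $\kk$, an ``$L$-span of bounded size'' gives no $\kk$-dimension control, and you have not explained which finitely generated subalgebra absorbs both the coefficients in your chosen $W$ and all the iterated derivatives that arise from commuting $\partial_i$ past elements of $F$. Making this precise forces you to pass to a finitely generated $A\subseteq L$ stable under the $\partial_i$, at which point you are essentially back in the paper's setting and need something like $W^kV=V^k$ to finish. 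Finally, your fallback of ``simply citing Smith'' does not work as stated: the paper explicitly notes that this module-theoretic statement does not appear in the literature and must be extracted from the methods of \cite{Smith}.
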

\begin{proof}
This is a direct consequence of the methods of \cite{Smith}, although this result does not seem to appear in the literature.

For any subalgebra $A$ of $L$, let $\Delta_A$ be the module of derivations of $A$.
If $\Spec A$ is smooth and affine, then by \cite[Proposition~2.2]{Smith}, $\Delta_A$ is projective and $D(A) $ is the subalgebra of $D(L)$ generated by $A$ and $\Delta_A$:  that is $D(A) =  A[\Delta_A]$.  
It follows that, $D(L) = L[\Delta_L]$, and that $$\GK_{D(L)}(L)= \sup_A \GK_{A[\Delta_A]} A,$$ where the supremum is taken over all finitely generated subalgebras $A$ of $L$ with $Q(A) = L$.  
Since $\operatorname{char} \kk = 0$, by generic smoothness we may enlarge $A$ to obtain a finitely generated algebra  $A' \subseteq L$ with   $\Spec A'$  smooth and $Q(A') = L$.
As $ \Delta_{A'}$ is projective, there is a finitely generated algebra  $A'' $ with $$A' \subseteq A'' \subseteq L$$ so that $\Delta_{A''}$ is free over $A''$, and it suffices to prove that $\GK_{A''[\Delta_{A''}]} (A'') = n$.  Let $D = A''[\Delta_{A''}]$.

Let $c_1, \dots, c_n$ be an $A''$-basis for $\Delta_{A''}$ and let $C $ be the $\kk$-span of $(c_1, \dots, c_n)$.
As a left $D$-module  we have $$A'' \cong D/ D C.$$
As in \cite[Section 4]{Smith} we may choose a finite-dimensional generating subspace $V$ of $A''$ so that $$[C,C] \subseteq VC~{\rm~and~}~C(V) \subseteq V^2.$$  
Let $W = V \oplus C$, which generates $D$.
By \cite[Theorem~4.4]{Smith}, for all $k$ we have $W^k V = V^k$ as subspaces of $A''$.
Thus $\GK_D A''  = \GK A'' = n$.
\end{proof}
\begin{proof}[Proof of Theorem~\ref{thm:GKW3}]
 Let $y_i$ be the image of $x_i = \gr e_i$ in $A$.  
 As a Poisson algebra, $A$ is generated by $y_1$ and $y_2$.
 For $i = 1,2$, let $$\del_i = \{ y_i, -\} \in \Der(A),$$ and let $D = A\ang{\del_1, \del_2} \subseteq D(A)$.
 We have $$\PGK A = \GK_D A$$ by Proposition~\ref{prop:PGK2}.
 
 Clearly $\PGK A \geq \GK A$, so it suffices to prove that $\GK_D A \leq \Kdim A,$
 which is $\GK A$ by Corollary~\ref{cor2}. 
 
 We first assume that $K$ is prime.
 By Lemma~\ref{Lns}(b), there is some nonzerodivisor $p \in A$ so that $$A \hra A' = A[p^{-1}]$$ and $A'$ is a finitely generated algebra.
 As $A'$ is also Poisson,  $D$ also acts on $A'$.
Let $$L:= Q(A) = Q(A'),\hspace{10pt}n:=\trdeg L.$$  
We have $n = \Kdim A' = \Kdim A$.  
Then we have $$\GK_D A \leq \GK_D L \leq \GK_{D(L)} L  = n = \Kdim A,$$ by Proposition~\ref{prop:Smithcor}.

For general $K$, by Corollary~\ref{Cns} we have $K = \mf p_1 \cap \dots \cap \mf p_m$, where the $\mf p_i$ are prime Poisson ideals and are therefore $D$-stable.  Thus $D$ acts on $A_i = A/\mf p_i$; let the  operators $\overline{\del_{1,2}}$ on $A_i$ be  induced from the action of $\del_{1,2}$ on $A$.

 Let $D_i = A_i \ang{\overline \del_1, \overline \del_2}$.  Clearly $\GK_D A_i = \GK_{D_i} A_i$, which is $\Kdim A_i$ by the prime case.
 Thus, applying \cite[Proposition~5.1(a)]{KL}, 
 \[ 
\GK_D A \leq \GK_D \left( \bigoplus_i A_i \right)= \max_i \GK_D A_i = \max_i \Kdim A_i.\]
  But this is $\Kdim A$ by definition.
\end{proof}

We note that the conclusion of Theorem~\ref{thm:GKW3} can fail for non-radical ideals.  
Indeed, let $$I = (x_i x_j:  i, j \in \ZZ_{\geq 1}){\rm~and~}~A = \S(W_+)/I.$$  
It is easy to see that $I$ is a Poisson ideal, and that $\GK A = 0$ and $\PGK A = 1$.

  We now derive results for  quotients of $\U(W_+)$.
  Recall that $\U(W_+)$ is both graded by degree and filtered by order of operators (as the enveloping algebra of a Lie algebra), and we write the degree and order of an element $f$ respectively as $d(f)$ and $o(f)$. 
  Thus $d(e_n) = n$ and $o(e_n) =1$.
  The symmetric algebra $$\S(W_+) = \gr_o \U(W_+)$$ is graded both by $d$ and by $o$, or alternatively is $\NN\times \NN$-graded.
 Note that if we write $do(f) = d(f) + o(f)$ for the {\em degree-order filtration} on $\U(W_+)$, that $\gr_{do} \U(W_+)  = \S(W_+)$ as well, although of course the induced grading is different.

 Our first result is that $d$-graded ideals of $\U(W_+)$ automatically give rise to Poisson ideals of $\S(W_+)$ so that the quotients  have good growth in the sense of \eqref{goodgrowth}.
 
 \begin{proposition}\label{prop:GKW1}
 Let $I$ be a $d$-graded ideal of $\U(W_+)$ and let $R = \U(W_+)/I$.
 \begin{itemize}
 \item[(a)] The $do$-filtration on $\U(W_+)$ induces a discrete, finite, exhaustive filtration on $R$ with respect to which $R$ is almost commutative.
 
 \item[(b)] Let $A = \gr_{do}(R)$.
 Then $$A  \cong \S(W_+)/\gr_{do}(I) \cong \S(W_+)/\gr_o(I).$$ 
 
 \item[(c)] $A$ has good growth with respect to the filtration induced from the $do$-filtration on $R$.
 \end{itemize}
 \end{proposition}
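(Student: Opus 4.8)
The plan is to treat the three parts in sequence; parts (a) and (b) are bookkeeping about filtered algebras and the PBW theorem, while the content is concentrated in part (c).

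\textbf{Part (a).} First I would check that $\U(W_+)$ itself is almost commutative for the $do$-filtration. This is the standard PBW-type argument: it suffices to check it on generators, and $[e_a,e_b] = (b-a)e_{a+b}$ has $do$-value $(a+b)+1 = do(e_a)+do(e_b)-1$, which lies in the required (indeed the top permissible) $do$-degree. Next, $\U(W_+)(n)_{do}$ is finite-dimensional: a PBW monomial $e_{i_1}\cdots e_{i_l}$ of $do$-value $\le n$ has all $i_j\ge 1$, so $2l \le \sum_j i_j + l \le n$ and $\sum_j i_j \le n$, leaving finitely many choices. The $do$-filtration is clearly discrete (with $\U(W_+)(0)_{do}=\kk$) and exhaustive. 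All of these properties pass to the quotient filtration on $R = \U(W_+)/I$, giving (a).

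\textbf{Part (b).} The standard identity $\gr_{do}(R) \cong \gr_{do}\U(W_+)/\gr_{do}(I)$ reduces the claim to two points. First, $\gr_{do}\U(W_+)\cong\S(W_+)$ as Poisson algebras: PBW gives the symbols $\bar e^\alpha$ as a basis, and in a product $e^\alpha e^\beta$ every correction term produced by reordering has the same $d$-value but strictly smaller length, hence strictly smaller $do$-value, so $\bar e^\alpha\bar e^\beta = \bar e^{\alpha+\beta}$; thus $\gr_{do}\U(W_+)$ is the polynomial ring on $\bar e_1,\bar e_2,\dots$, and since $[e_a,e_b]$ sits in $do$-degree exactly $do(e_a)+do(e_b)-1$ the induced bracket is $\{\bar e_a,\bar e_b\} = (b-a)\bar e_{a+b}$, i.e. the Poisson bracket of $\S(W_+)$. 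Second, $\gr_{do}(I)=\gr_o(I)$ as subsets of $\kk[x_1,x_2,\dots]$: since $I$ is $d$-graded, each of these ideals is spanned by leading terms of $d$-homogeneous $f\in I$, and for such an $f$ both the $o$-leading term and the $do$-leading term consist of exactly the monomials of $f$ of maximal length. Combining, $\gr_{do}(R)\cong\S(W_+)/\gr_o(I)$ as Poisson algebras.

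\textbf{Part (c).} Write $A = \S(W_+)/\gr_o(I)$ with the grading coming from $do$, so $A(n)$ is spanned by the images of the monomials $x_{i_1}\cdots x_{i_l}$ with $\sum_j i_j + l \le n$, and $A(3)$ is spanned by the images of $1, x_1, x_2$. I claim \eqref{goodgrowth} holds with $k=3$. The key input is that $x_1, x_2$ Poisson-generate $\S(W_+)$ with bounded Poisson-degree: from $\{x_1,x_m\} = (m-1)x_{m+1}$ and Lemma~\ref{lem:Ppowers}, induction gives $x_m \in A(3)^{\{m\}}$ for every $m\ge 1$. Hence, using Lemma~\ref{lem:Ppowers} again, $x_{i_1}\cdots x_{i_l}\in A(3)^{\{i_1\}}\cdots A(3)^{\{i_l\}} \subseteq A(3)^{\{\sum_j i_j\}}$, and since $1\in A(3)$ we have $A(3)^{\{a\}}\subseteq A(3)^{\{b\}}$ for $a\le b$; as $\sum_j i_j \le n-l\le n$ (and the constant lies in $A(3)^{\{0\}}$), every spanning monomial of $A(n)$ lies in $A(3)^{\{n\}}$. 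This is exactly good growth.

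\textbf{Main obstacle.} The only step carrying genuine content is the identification of the \emph{Poisson} structure on $\gr_{do}\U(W_+)$ in part (b): one must verify that the non-standard $do$-weighting does not collapse the bracket to zero, which works precisely because the Witt relations are $d$-homogeneous and $[e_a,e_b]$ lands in the top permissible $do$-degree $do(e_a)+do(e_b)-1$ rather than something lower. Once this is in place, part (c) is a short computation with Lemma~\ref{lem:Ppowers} and parts (a)--(b) are routine.
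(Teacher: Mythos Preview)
Your proof is correct and follows essentially the same approach as the paper's: parts (a) and (b) are handled there as routine filtered-algebra bookkeeping (with $\gr_{do}(I)=\gr_o(I)$ deduced in one line from $d$-homogeneity), and part (c) is proved by the same induction $x_m \in V^{\{m\}}$ via $\{x_1,x_{m-1}\}$, with $V=\kk\cdot(x_1,x_2)\subseteq A(3)$. The only cosmetic differences are that the paper reduces (c) to showing good growth for $\S(W_+)$ itself rather than the quotient, and uses $V$ in place of $A(3)$; your direct argument with $A(3)$ and Lemma~\ref{lem:Ppowers} is equivalent.
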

 \begin{proof}
 Let $U=\U(W_+)$, let $$U(n) = \{ f \in U \ : \ do(f) \leq n\},$$ and let $I(n) = I \cap U(n).$
 Let $$R(n) = (U(n)+I)/I \cong U(n)/I(n).$$ 
 It is immediate that the $R(n) $ give a discrete, finite, exhaustive filtration on $R$, which we will  refer to as  the $do$-filtration on $R$.  
 Since $\U(W_+)$ is almost commutative with respect to the $do$-filtration, clearly $R$ is almost commutative with respect to the $do$-filtration on $R$, and thus (a) holds.

 As $A = \gr_{do}(R)$, then 
$$ A_n = 
 \frac{(U(n)+I)/I}{(U(n-1)+I)/I} \cong \frac{U(n)}{U(n-1)+I(n)}.$$
 Thus $\gr_{do}(I)$ is the kernel of the natural surjection from $$\S(W_+) = \gr_{do}(U) \to A.$$
 As $I$ is $d$-graded we have $\gr_{do}(I) = \gr_o(I)$ as ideals of $\S(W_+)$.  This proves (b).  
 
 For (c), it suffices to show that $\U(W_+)$ has good growth.  
 Let $$V = \kk\cdot(x_1,x_2) \subseteq A(3).$$
 Let $y \in A(n) \ssm A(n-1)$.  We must show that $y \in V^{\{n\}}$. 
 
Notice that $d(y) \leq do(y) \leq n$.
 We can write $y$ as a sum of monomials of the form
 $ e_{i_1} e_{i_2} \cdots e_{i_\ell}$,
 where $i_1 \leq i_2 \leq \cdots \leq i_\ell$ and $\sum i_j \leq n$.
To show that $y \in V^{\{n\}}$, it suffices to show that $e_m \in V^{\{m\}}$ for all $m$.
 This is true for $m =1, 2$; and for $m \geq 3$ we have for some $\lambda \in \kk \ssm 0$ that $e_m = \lambda \{ e_1, e_{m-1}\} \in \{ V, V^{\{m-1\}}\}$ by induction.
 \end{proof}
 
 \begin{remark}\label{rem:altfilter}
  There is an alternate filtration on $A$ defined via  the $d$-grading:  let $$\mc{F}^i\S(W_+) = \{ f \in \S(W_+): d(f) \leq i\}$$ and define a filtration $\mc{F}^iA$ on $A$ accordingly.  Then the argument above shows that $$\mc{F}^nA \subseteq V^{\{n\}} \subseteq (\mc{F}^2 A)^{\{n\}},$$ so $A$ also has good growth with respect to the filtration 
 \[ (0) \subseteq \mc{F}^0 A \subseteq \mc{F}^1A \subseteq \dots.\]
 \end{remark}

 Combining  the previous proposition with earlier results, we obtain:
 \begin{theorem}\label{thm:GKW2}
 Let $J$ be an ideal of $\U(W_+)$.  Let $I = \gr_d(J) \triangleleft \U(W_+)$
 and let $$K = \gr_o(I) = \gr_{do}(I) \triangleleft \S(W_+).$$
 Then $\GK \U(W_+)/J = \GK \U(W_+)/I  = \PGK \S(W_+)/K$.
 \end{theorem}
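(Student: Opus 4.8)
The plan is to deduce Theorem~\ref{thm:GKW2} by chaining together the results already established, after first reducing to the case of a $d$-graded ideal. The first step is to pass from $J$ to $I = \gr_d(J)$. This is where the real work of the theorem lies: I would argue that $\GK \U(W_+)/J = \GK \U(W_+)/I$ by the standard filtered–graded principle applied to the $d$-grading. Concretely, the $d$-grading on $\U(W_+)$ is a grading by the semigroup $\ZZ_{\geq 1}$-spans, but the associated filtration $\sF^i\U(W_+) = \bigoplus_{k \le i} \U(W_+)_k$ (by total degree) is discrete, finite, and exhaustive, and $\gr$ of this filtration recovers $\U(W_+)$ itself as a graded algebra. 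Then $J$ acquires the induced filtration, $\gr_d(J) = I$, and $\gr(\U(W_+)/J) \cong \U(W_+)/I$. Since GK-dimension is preserved under the associated graded construction for finite filtrations when the relevant growth functions agree — here both $\U(W_+)/J$ and its associated graded have the same Hilbert/growth function because passing to $\gr$ does not change dimensions in each filtered piece — we get $\GK \U(W_+)/J = \GK \U(W_+)/I$. I should be slightly careful: $\U(W_+)$ is not finitely generated, so the cleanest phrasing is that for each finitely generated subalgebra one compares growth, or one invokes that $\GK R = \sup_{R'} \GK R'$ over finitely generated $R'$ and matches these up on both sides via $\gr_d$.

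The second step handles the $d$-graded ideal $I$. Here I apply Proposition~\ref{prop:GKW1}: since $I$ is $d$-graded, the $do$-filtration on $R := \U(W_+)/I$ is discrete, finite, exhaustive, and $R$ is almost commutative with respect to it (part (a)); moreover $A := \gr_{do}(R) \cong \S(W_+)/\gr_{do}(I) \cong \S(W_+)/\gr_o(I) = \S(W_+)/K$ (part (b)); and $A$ has good growth with respect to this filtration (part (c)). Now Proposition~\ref{prop:GKandPGK} applies verbatim to the almost commutative filtered algebra $R$ with $\gr R = A$ of good growth, giving
\[ \GK(R) = \PGK(A) = \varlimsup \log_n \dim A(n).\]
In particular $\GK \U(W_+)/I = \PGK \S(W_+)/K$.

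The third step is simply to combine: $\GK \U(W_+)/J = \GK \U(W_+)/I = \PGK \S(W_+)/K$, and the middle object $\GK \U(W_+)/I$ equals $\PGK \S(W_+)/K$ by step two, which also identifies $K = \gr_o(I) = \gr_{do}(I)$ as claimed in the statement. One technical point worth noting in the write-up: Proposition~\ref{prop:GKandPGK} as stated requires $R$ to be a \emph{finitely generated} $\kk$-algebra, whereas $\U(W_+)/I$ need not be; but good growth (\eqref{goodgrowth}) forces $R(k)$ to generate $R$ for the relevant $k$, so in fact $R$ \emph{is} finitely generated once we know $A$ has good growth — this is exactly the argument in the second half of the proof of Proposition~\ref{prop:GKandPGK}, where $R(n) \subseteq W^n$ is derived. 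So no extra hypothesis is needed.

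The main obstacle is the first step — justifying $\GK \U(W_+)/J = \GK \U(W_+)/I$ for the $d$-grading on the non-finitely-generated algebra $\U(W_+)$. The subtlety is that the naive appeal to \cite[Proposition~6.6]{KL} is for finitely generated algebras, so I would phrase the argument through finitely generated subalgebras: every finitely generated subalgebra of $\U(W_+)/J$ is contained in the image of some finite-dimensional $d$-homogeneous-up-to-filtration piece, its growth is bounded by that of the corresponding graded piece in $\U(W_+)/I$, and conversely $d$-homogeneous finitely generated subalgebras of $\U(W_+)/I$ lift, dimension by dimension, to subquotients of $\U(W_+)/J$ with the same growth. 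The key identity making this work is that for a finite filtration $\dim \sF^n M = \dim \gr^{\le n} M$ for any filtered subquotient $M$, so no dimension is lost. Everything else is a direct citation of Proposition~\ref{prop:GKW1} and Proposition~\ref{prop:GKandPGK}.
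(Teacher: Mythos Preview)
Your approach is essentially the paper's: first pass from $J$ to $I=\gr_d(J)$ via \cite[Proposition~6.6]{KL}, then apply Proposition~\ref{prop:GKW1} and Proposition~\ref{prop:GKandPGK} to get $\GK \U(W_+)/I = \PGK \S(W_+)/K$.

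Your one misstep is the claim that $\U(W_+)$ is not finitely generated, which drives all of your worry about step~1. In fact $\U(W_+)$ \emph{is} finitely generated as an associative algebra: since $[e_1,e_{m-1}]=(m-2)e_m$, every $e_m$ lies in the subalgebra generated by $e_1,e_2$. Hence $\gr_d(\U(W_+)/J)=\U(W_+)/I$ is finitely generated, the $d$-filtration is discrete, finite, and exhaustive, and \cite[Proposition~6.6]{KL} applies directly with no need for the finitely-generated-subalgebra workaround you sketch. (It is $\S(W_+)$ that fails to be finitely generated as a \emph{commutative} algebra, which is precisely why the $do$-step requires the Poisson machinery.) With that correction, your write-up collapses to the paper's two-line proof.
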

 
\begin{proof}
 That $\GK \U(W+)/J = \GK \U(W_+)/I $ is \cite[Proposition~6.6]{KL}, since the $d$-grading on $\U(W_+)$ induces a discrete finite exhaustive filtration on $\U(W_+)$.
 By Propositions~\ref{prop:GKandPGK} and \ref{prop:GKW1}, $$\GK \U(W_+)/I = \PGK \S(W_+)/K.$$ \end{proof}

\begin{remark}\label{rem:goodgrowth}
If $R = \bigoplus_{j \in \NN} R_j$ is any $\NN$-graded ring that also has  a discrete finite exhaustive filtration 
\[
(0)\subseteq R(0)\subseteq R(1)\subseteq R(2)\subseteq\cdots
\]
 with respect to which $R$ is almost commutative, and so that each $R(n) = \bigoplus_j (R(n) \cap R_j)$ is a graded vector space, then the argument above shows (by adding the two gradings on $A := \gr R$) that if $A $ is finitely generated as a Poisson algebra then $A$ has good growth and therefore that $\GK R = \PGK A$.
 \end{remark}

Finally, we have:
\begin{corollary}\label{cor4}
Let $J$ be a nontrivial ideal of $\U(W_+)$ so that $\gr_o(\gr_d(J))$ is radical.
Then \[\GK \U(W_+)/J <\infty.\]
\end{corollary}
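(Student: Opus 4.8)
The plan is to assemble Corollary~\ref{cor4} from results already established in the excerpt, since almost all the work has been done. Let $J$ be a nontrivial ideal of $\U(W_+)$, set $I = \gr_d(J)$ and $K = \gr_o(I) = \gr_{do}(I)$, which is a Poisson ideal of $\S(W_+)$ by the discussion following Theorem~\ref{thm:acc}. By hypothesis $K$ is radical. By Theorem~\ref{thm:GKW2} we have
\[ \GK \U(W_+)/J = \PGK \S(W_+)/K. \]
So it suffices to bound $\PGK \S(W_+)/K$.

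First I would check that $K$ is nontrivial, i.e. $K \neq (0)$. Since $J \neq (0)$, pick a nonzero $f \in J$; passing to the top $d$-graded component gives a nonzero element of $I = \gr_d(J)$, and then passing to the top $o$-graded component of that gives a nonzero element of $K = \gr_o(I)$. (Both associated-graded operations send nonzero elements to nonzero elements, since they just record the appropriate leading term.) Hence $K$ is a nonzero, hence nontrivial, radical Poisson ideal of $\S(W_+)$.

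Now apply Theorem~\ref{thm:GKW3} to $A = \S(W_+)/K$: it gives $\PGK A = \GK A < \infty$. Combining with the displayed equality above yields $\GK \U(W_+)/J = \PGK \S(W_+)/K < \infty$, which is the claim. That is the entire argument: it is a two-line deduction chaining Theorem~\ref{thm:GKW2} and Theorem~\ref{thm:GKW3}, with the only genuinely-needed observation being that $K$ is nonzero whenever $J$ is.

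I do not anticipate a real obstacle here, since the substance lives in Theorems~\ref{thm:GKW2} and~\ref{thm:GKW3}. The one point to be careful about is the bookkeeping of the two filtrations/gradings: one must make sure that "$\gr_o(\gr_d(J))$ is radical" is exactly the hypothesis "$K$ is radical" needed to invoke Theorem~\ref{thm:GKW3}, and that the identification $K = \gr_o(I) = \gr_{do}(I)$ is the one supplied by Theorem~\ref{thm:GKW2} (via Proposition~\ref{prop:GKW1}(b), using that $I$ is $d$-graded). Once those identifications are in place, the proof is immediate; if one wanted, one could also phrase the non-vanishing of $K$ directly as: a nonzero ideal of $\U(W_+)$ has nonzero image in every associated graded ring obtained from a discrete filtration, because the filtration is separated.
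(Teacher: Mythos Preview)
Your proposal is correct and matches the paper's own proof, which simply says the result follows directly from Theorems~\ref{thm:GKW2} and \ref{thm:GKW3}. Your additional care in checking that $K \neq (0)$ and in tracking the identification $K = \gr_o(\gr_d(J)) = \gr_{do}(I)$ is appropriate bookkeeping that the paper leaves implicit.
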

\begin{proof}
This follows directly from Theorems~\ref{thm:GKW2} and \ref{thm:GKW3}.
\end{proof}

We conjecture  that the conclusion of Corollary~\ref{cor4} holds for any  nontrivial ideal of $\U(W_+)$; see  Conjecture~\ref{conj1}.  
Note that by Theorem~\ref{thm:GKW2}, it suffices to prove that $\PGK \S(W_+)/K < \infty$ for any $d$-graded and $o$-graded Poisson ideal $K$ of $\S(W_+)$.

\section{Quotients by quadratic elements}\label{QUADRATIC}

The results in the previous sections may be thought of as providing evidence that Conjecture~\ref{conj1} holds and thus that nontrivial ideals of $\U(W_+)$ and Poisson ideals of $\S(W_+)$ are large. 
 If this is the case, it is natural to expect  that $\U(W_+)$ satisfies the ascending chain condition on ideals: in other words,  that Conjecture~\ref{conj2} holds.
 (Examples such as \cite[Theorem~2.14]{Bell} show that finite GK-dimension does not even imply the ascending chain condition on prime ideals, so we phrase this as an expectation, not a formal consequence.)
 
 In this section we study Conjecture~\ref{conj1} and Conjecture~\ref{conj2} for ideals containing elements of order two.
 We first prove that $\S^2(W_+)$ is a noetherian representation of $W_+$, from which it follows trivially that  $\S(W_+)$ satisfies the ascending chain  condition  on Poisson ideals generated by elements of order two. 
  (As a byproduct, we show that $\S^2(W_+)$ is GK-2 critical.)
 As a consequence of our methods, we show that any quotient of $\U(W_+)$ by an ideal containing a nontrivial element of order one or two  has finite GK-dimension.  
 
\subsection{Noetherianity of  $\S^2(W_+)$}
Before proving that $\S^2(W_+)$ is noetherian, we 
show that the adjoint representation of $W_+$ is noetherian. This is implied by the following lemma.
\begin{lemma}\label{Lalev} Let $\mathfrak l$ be a nonzero submodule of $W_+$. Then
 $e_n\in\mathfrak l$ for some $n$.  As a result, 
$\dim (W_+/\mathfrak l)<\infty$.
\end{lemma}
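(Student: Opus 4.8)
The plan is to work with the degree gradation: $W_+ = \bigoplus_{n \geq 1} \kk e_n$ is $\ZZ$-graded, and $\mathfrak l \subseteq W_+$ is a graded submodule (since $W_+$ is generated by weight vectors, any submodule is spanned by its weight components). So there is some $e_n \in \mathfrak l$ of minimal degree $n$ among nonzero elements of $\mathfrak l$. First I would show that once $e_n \in \mathfrak l$, we get $e_m \in \mathfrak l$ for all $m \geq n+2$: indeed $e_1 \cdot e_n = (n-1) e_{n+1}$, so $e_{n+1} \in \mathfrak l$; and more generally $e_k \cdot e_n = (n-k) e_{n+k} \in \mathfrak l$, which produces $e_m$ for every $m \geq n+2$ by choosing $k = m - n \geq 2$ (so $n - k = 2n - m \neq 0$ is automatic only if $m \neq 2n$; but we can also get $e_{2n}$ via $e_{n+1} \in \mathfrak l$ and $e_1 \cdot e_{n+1} \sim e_{n+2}$, then iterate, or via $e_2 \cdot e_{n-1}$-type relations). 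The cleanest route: from $e_n \in \mathfrak l$, acting by $e_1$ repeatedly gives $e_{n+1}, e_{n+2}, \ldots$ all in $\mathfrak l$ (each step $e_1 \cdot e_m = (m-1)e_{m+1}$ with $m - 1 \neq 0$ since $m \geq n \geq 1$, and if $n=1$ then $e_1 \cdot e_1 = 0$ but then $e_1 \in \mathfrak l$ and $e_2 \cdot e_1 = -e_3$, $e_1 \cdot e_3 = 2 e_4$, etc.). Either way, $\mathfrak l$ contains $e_m$ for all sufficiently large $m$, hence $W_+ / \mathfrak l$ is spanned by the finitely many images of $e_1, \ldots, e_{n+1}$, so $\dim(W_+/\mathfrak l) < \infty$.

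The one subtlety I would be careful about is the vanishing coefficient $j - i$ in $[e_i, e_j] = (j-i)e_{i+j}$: the action map $e_i \cdot e_j$ is zero exactly when $i = j$. This means a single application of $\ad e_i$ can fail to move up a rung, but it never obstructs the argument because we have all of $e_1, e_2$ acting, and for any target index $m > n$ at least one of $e_{m-n}\cdot e_n$ or a two-step path through $e_{n+1}$ is nonzero. So the main (very minor) obstacle is just organizing the induction to avoid the diagonal $i = j$; there is no real difficulty here. I would state it as: from $e_n \in \mathfrak l$ one obtains, by applying $\ad e_1$ successively (and $\ad e_2$ once in the edge case $n = 1$), that $e_m \in \mathfrak l$ for all $m \geq n+1$ (for $n \geq 2$) or all $m \geq 3$ (for $n = 1$).

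Finally, to conclude $\dim(W_+/\mathfrak l) < \infty$: the quotient is spanned by the images of $e_1, \dots, e_N$ for $N$ equal to the smallest index such that $e_m \in \mathfrak l$ for all $m > N$, which exists by the previous paragraph. Since $\mathfrak l$ was an arbitrary nonzero submodule, this shows every nonzero submodule of $W_+$ is finite-codimensional; combined with the obvious fact that $W_+$ itself is not finite-dimensional, this is precisely the statement that the adjoint representation of $W_+$ is ``just infinite'' as a module, which is the noetherianity we want (an ascending chain of submodules is eventually finite-codimensional and then stabilizes for dimension reasons). This last packaging step I would keep to one or two sentences.
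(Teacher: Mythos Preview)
Your proposal has a gap at the very first step: the claim that ``any submodule is spanned by its weight components'' is false for $W_+$-submodules of the adjoint representation. The degree gradation on $W_+$ is not the eigenspace decomposition for any element of $W_+$ (there is no $e_0$ in $W_+$), so a submodule need not be $d$-graded. Concretely, set
\[
\mathfrak l \;=\; \kk(e_1+e_2)\ \oplus\ \bigoplus_{m\geq 3}\kk e_m.
\]
For every $i\geq 1$ one has $[e_i,e_1+e_2]=(1-i)e_{i+1}+(2-i)e_{i+2}\in\bigoplus_{m\geq 3}\kk e_m$, and $[e_i,e_j]\in\bigoplus_{m\geq 4}\kk e_m$ for $j\geq 3$, so $\mathfrak l$ is a submodule; but $e_1,e_2\notin\mathfrak l$, hence $\mathfrak l$ is not graded. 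Thus you cannot simply pick ``some $e_n\in\mathfrak l$ of minimal degree'' without further argument.

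The paper supplies exactly the missing step. Take any nonzero $x=\sum_{k=1}^{s}a_k e_{i_k}\in\mathfrak l$ with $i_1<\cdots<i_s$ and all $a_k\neq 0$, and bracket with $e_{i_1}$: the $k=1$ term vanishes, while for $k\geq 2$ the terms $a_k(i_k-i_1)e_{i_1+i_k}$ are nonzero with distinct indices. Hence $[e_{i_1},x]\in\mathfrak l$ is nonzero of length $s-1$, and iterating yields some $e_n\in\mathfrak l$. Once that is established, your argument for producing $e_m\in\mathfrak l$ for all sufficiently large $m$ (via repeated application of $\ad e_1$, handling the $n=1$ edge case with $\ad e_2$) is correct and is essentially what the paper does as well.
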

We would thank Jacques Alev for the proof of this result.
\begin{proof}
 Fix $x\in\mathfrak l\backslash 0$. Then there are $$0<i_1<i_2<\dots<i_s\in\ZZ_{\geq 1}{\rm~and~}a_1,\dots, a_s\in\kk \backslash 0$$
such that
$$x=a_1e_{i_1}+\cdots +a_se_{i_s}.$$
We say that $s$ is the {\em length} of $x$.
If $s > 1$ then $[e_{i_1}, x]$ is nonzero and has length $< s$.  By induction, there is some $e_n$ in the Lie ideal generated by $x$.
It is an easy computation that the Lie ideal generated by  $e_n$ contains $e_{\geq n+2} = \{ e_j : j \geq n+2 \}$.
\end{proof}
By the above, any nontrivial ideal of $W_+$ 
has cofinite dimension, and thus   $W_+$ is noetherian as a Lie algebra and  as a $ W_+$-module.
The main result of this subsection is:

\begin{theorem}\label{thm:S2}  The $W_+$-module $\S^2(W_+)$ is  noetherian.
\end{theorem}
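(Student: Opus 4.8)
The plan is to study the $W_+$-module structure of $\S^2(W_+)$ by filtering it via the degree grading and analysing submodules through their leading terms. Recall $\S^2(W_+)$ has a basis $\{x_ix_j : 1 \leq i \leq j\}$, with $d(x_ix_j) = i+j$, so each graded piece $\S^2(W_+)_n$ is finite-dimensional (spanned by $x_ix_j$ with $i+j = n$). The action is $e_k \cdot (x_ix_j) = (i-k)x_{i+k}x_j + (j-k)x_ix_{j+k}$, which raises degree by $k$. Given a nonzero submodule $M$, I would associate to each element its top-degree component and consider the "leading term space" $\mathrm{LT}(M) \subseteq \bigoplus_n \S^2(W_+)_n$; the key point is that $M$ is determined up to finite ambiguity by this graded object, so it suffices to prove an ascending chain condition statement at the graded level.

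First I would reduce to graded submodules: since the degree grading on $W_+$ makes $\S^2(W_+)$ a graded module and each homogeneous component is finite-dimensional, a standard argument (as in Proposition~\ref{prop:GKW1} and the surrounding filtration machinery) shows that ACC for all submodules follows from ACC for graded submodules. Then, for a graded submodule $M = \bigoplus_n M_n$ with $M_n \subseteq \S^2(W_+)_n$, the essential task is to show that the sequence of subspaces $M_n$ is eventually "as large as possible" in a controlled sense — concretely, that there is some $N$ such that for $n \geq N$, $M_n$ is recovered from $M_N, M_{N+1}, \dots, M_{N+c}$ by applying $e_1$ and $e_2$ (which generate the action, since $[e_1,e_{k-1}]$ and iterated brackets reach all $e_k$). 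This is where explicit computation enters: one must understand the images $e_1 \cdot \S^2(W_+)_n$ and $e_2 \cdot \S^2(W_+)_n$ inside $\S^2(W_+)_{n+1}$ and $\S^2(W_+)_{n+2}$ precisely enough to see that the codimension of $M_n$ in $\S^2(W_+)_n$ is eventually non-increasing and hence eventually constant. I expect this to be carried out by identifying $\S^2(W_+)$ (or a large submodule of it) with a more tractable module — for instance relating it to $\S(W_+)$-modules of functions or to a module induced from a small piece — and then invoking finite-dimensionality of the "defect" at each degree.

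The main obstacle, and the step I expect to require the computer calculations alluded to in the appendix, is establishing the uniform bound: showing that beyond some explicit degree $N$ the action maps $\S^2(W_+)_n \xrightarrow{e_1 \oplus e_2} \S^2(W_+)_{n+1} \oplus \S^2(W_+)_{n+2}$ are surjective (or have cokernel of bounded, eventually stable dimension) when restricted appropriately, so that a graded submodule cannot "lose ground" infinitely often. Equivalently, one wants: if $M \subsetneq \S^2(W_+)$ is a graded submodule then $\dim\bigl(\S^2(W_+)_n / M_n\bigr)$ is bounded independently of $n$, and any strictly increasing chain forces these bounds to strictly decrease. Once that finiteness-and-monotonicity is in hand, an ascending chain $M^{(1)} \subseteq M^{(2)} \subseteq \cdots$ must stabilise: the codimension sequences are eventually constant and weakly decreasing along the chain, so they stabilise, and the finitely many "new" generators needed in low degrees also stabilise. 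Finally, the byproduct that $\S^2(W_+)$ is GK-$2$ critical follows by computing $\GK \S^2(W_+) = 2$ from $\dim \S^2(W_+)_n \sim n/2$ (so the Hilbert series grows linearly, giving GK-dimension $2$ as a module, cf.\ the conventions after Definition~\ref{def:growth}) together with the fact that every proper quotient has a finite-dimensional-per-degree image with bounded codimension, hence GK-dimension $\leq 1$.
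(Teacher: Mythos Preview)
Your framework --- reduce to graded submodules and control them via leading-term or codimension data --- matches the paper's, but the decisive computational step is missing, and the specific one you propose does not work. You suggest studying the map $\S^2(W_+)_n \xrightarrow{e_1 \oplus e_2} \S^2(W_+)_{n+1} \oplus \S^2(W_+)_{n+2}$ and hoping for bounded cokernel ``when restricted appropriately''. But $\dim \S^2(W_+)_n = \lfloor n/2 \rfloor$, so the source has roughly half the dimension of the target and the cokernel grows linearly; there is no useful surjectivity here. More seriously, even granting that every nonzero graded submodule $M$ has $\dim(\S^2(W_+)_n/M_n)$ bounded (this is true, but in the paper it is a \emph{consequence} of the noetherianity machinery, not an input), your deduction of ACC is incomplete: along a chain $M^{(1)} \subsetneq M^{(2)} \subsetneq \cdots$ the eventual codimensions $c^{(k)}$ would indeed stabilise, but the threshold degree at which each $c^{(k)}$ is attained may drift to infinity, so the chain need not stop. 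A graded module all of whose nonzero submodules have bounded-codimension pieces can still fail ACC.

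What the paper does is sharper and supplies exactly the missing mechanism. It puts a monomial order on the index set $\Gamma = \{(i,j): 1 \le i \le j\}$ and proves (Lemma~\ref{lem:S2one}, via the computer calculations in the appendix) that for $i \gg 0$ and any homogeneous $f$ with leading exponent $(i,j)$, the \emph{eleven} operators $e_{\blambda}$ with $\blambda \vdash 6$ applied to $f$ surject onto the four-dimensional span of $x_i x_{j+6},\, x_{i+1}x_{j+5},\, x_{i+2}x_{j+4},\, x_{i+3}x_{j+3}$. Combined with the action of $e_1$ (which moves $(i,j)$ to $(i,j+1)$), this shows that the leading-term set $\gamma(M) \subseteq \Gamma$ of a graded submodule $M$ is stable under the semigroup $\Sigma$ generated by $(0,1)$ and $(3,3)$. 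Since $\Gamma$ is finitely generated over $\Sigma$, it is a noetherian $\Sigma$-set, so $\gamma(M)$ is finitely $\Sigma$-generated, and a standard leading-term argument lifts this to finite generation of $M$. The choice of degree $6$ is the whole point: it provides enough independent operators to move the leading pair \emph{diagonally}, i.e.\ to increase the first index $i$, which $e_1$ and $e_2$ alone cannot do (their action on leading terms only increases $j$). Your proposal has no device for advancing $i$, and that is the gap.
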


\begin{proof}
Our strategy is to put a monomial order on $\S^2(W_+)$ and then for a submodule $M \leq \S^2(W_+)$, describe the combinatorial structure of the set of leading terms of elements of $M$.

We first establish notation.  
A basis for $\S^2(W_+)$ is $\{x_i x_j : 1 \leq i \leq j\}$.
Let $ \Gamma = \{ (i,j) \in \ZZ^2 : 1 \leq i \leq j\}$, so $\Gamma$ is a grading  semigroup for $\S^2(W_+)$ as a vector space. 
Define an order $\prec$ on $\Gamma$ by setting $(i,j) \prec (k, \ell)$ if and only if either $i +j< k+\ell$ or $i+j=k+\ell$ and $j < \ell$.
Note that $\prec$ is a well-ordering, and that
 the smallest elements of $\Gamma$ are 
  \[ (1,1) \prec (1,2)  \prec (2,2)  \prec  (1,3)  \prec  (2,3)  \prec  (1,4)  \prec  (3,3) \prec \dots .\]
  If $f \in \S^2(W_+)$, let $\gamma(f)$ be the degree of the leading term of $f$ with respect to the order on $\Gamma$; so 
  $$\gamma (2 x_1x_5 + x_3^2) = (1,5).$$

  For $n \in \NN$, let $$\U(W_+)_n := \{ f \in \U(W_+) \st \mbox{ $f$ is $d$-homogeneous of degree $n$ } \}.$$
  Our  convention going forward is that if $X$ is a $d$-graded object, then $X_d = \{ x\in X : d(x) = d\}$.
  
  We then have:

 \begin{lemma}\label{lem:S2one}
  For $1 \leq i \leq j \in \NN$, let $\VV_{ij} = \spn(x_i x_{j+6}, x_{i+1} x_{j+5}, x_{i+2}x_{j+4}, x_{i+3} x_{j+3})$ and let 
  \[\pi_{ij}:  \S^2(W_+) \to \VV_{ij}\]
   be the projection.
 There is some integer $N > 1$ so that for all $N \leq i \leq j$ and for all $d$-homogeneous $f \in \S^2(W_+)$ with $\gamma(f) = (i,j)$, the linear map
 \[ \pi_f:  \U(W_+)_6 \to \VV_{ij},  \quad p \mapsto \pi_{ij}(p \cdot f)\]
 is surjective.
   \end{lemma}
  
  Assume Lemma~\ref{lem:S2one} for the moment.   For any $n \in \NN$, let 
  \[S(n ) = \kk \cdot (x_i x_j: n \leq i \leq j)\]
   and let 
 $S'(n) = \kk \cdot (x_i x_j:  
\mbox{ $n  \leq i \leq j $ with $n \neq j$ }).$
There is  a chain of $W_+$-modules
 \[ \S^2(W_+) = S(1) \supset S'(1) \supset S(2) \supset S'(2) \supset \dots .\]
 Now $S(n)/S'(n)$ is $1$-dimensional, and $S'(n) /S(n+1)$ is spanned by $$\{ f_m = x_n x_m + S(n+1): m > n\}.$$
 Since $e_k \cdot f_m = (m-k) f_{m+k}$, thus $S'(n) /S(n+1)$ is isomorphic to a subrepresentation of $W_+$ and is  noetherian.
 
 Let $S = S(N)$, where $N$ is 
  the constant given in Lemma~\ref{lem:S2one}.
  By the above, $\S^2(W_+)/S$ is noetherian, so 
   it suffices to prove that $S$ is noetherian. 
   Since $S$ is  $\NN$-graded by degree, by \cite[Proposition~1.6.7]{MR} it suffices to prove that any $d$-graded submodule is finitely generated.
    
     Let $M$ be a $d$-graded submodule of $S$, and consider $\gamma(M) \subseteq \Gamma$. 
It follows from Lemma~\ref{lem:S2one} that if $f \in M$ with $\gamma(f) = (i,j)$, then there is $p \in \U(W_+)_6$ so that $\gamma(p \cdot f) = (i+3, j+3)$.  Further, since $i \geq 2$, we see that $\gamma(e_1 \cdot f) = (i,j+1)$.
It follows that if    $\Sigma $ is the sub-semigroup of $\NN \times \NN$ generated by $\{(0,1),(3,3)\}$, then $\gamma(M)$ is a $\Sigma$-subrepresentation of $\Gamma$.

Since  $\Sigma$ is finitely generated and abelian and $\Gamma$ is generated over $\Sigma$ by $\{(1,1), (2,2), (3,3)\}$, thus $\Gamma$ is a noetherian representation of $\Sigma$.  
 Thus there are homogeneous $f_1, \dots, f_k \in M$ so that $\gamma(M)$ is generated as a $\Sigma$-module by $\gamma(f_1) , \dots, \gamma(f_k)$.  
 Let $M'$ be the $W_+$-subrepresentation of $M$ generated by $f_1, \dots, f_k$.  We claim that $M' = M$.
 
 To see this, note that $\gamma(M')\subseteq \gamma(M)$ is a $\Sigma$-module containing $\gamma(f_1),\dots, \gamma(f_k)$; thus $\gamma(M') = \gamma(M)$.  Suppose that there exists 
 homogeneous $f \in M \ssm M'$; we may assume that  $\gamma(f) $ is  minimal in the $\prec$ order among all such $f$.
 By the above, there is $f' \in M'$ with $\gamma(f') = \gamma(f)$.  
Then $f - f' \in M \ssm M'$, and  $\gamma(f-f') \prec \gamma(f)$, contradicting our choice of~$f$.
 \end{proof}

  It remains to prove Lemma~\ref{lem:S2one}.  
  
  \begin{proof}[Proof of Lemma~\ref{lem:S2one}]
 The proof is computational.   We write  $ e_{\blambda} = e_{\lambda_1} \dots e_{\lambda_k} $ where $\blambda = (\lambda_1 \leq \lambda_2 \leq \cdots \leq \lambda_k)$ is a partition. 
 (In this proof, $\blambda$ will be a partition of 6, but later we will use this notation for a general partition.)
  Thus, for example, $e_{114} = e_1^2 e_4$.

   We have:
\begin{align*}
  e_6 \cdot x_i x_j &=(j-6) x_i x_{j+6} + (i-6) x_{i+6} x_j\\
 e_{1 5} \cdot x_i x_j =& (j+4)(j-5) x_i x_{j+6} + (i-1)(j-5)x_{i+1}x_{j+5} + (i-5)(j-1) x_{i+5}x_{j+1} + (i+4)(i-5) x_{i+6}x_j\\
   e_{24} \cdot x_i x_j =& (j+2)(j-4) x_i x_{j+6} + (i-2)(j-4) x_{i+2}x_{j+4} + (i-4)(j-2) x_{i+4}x_{j+2} + (i+2)(i-4) x_{i+6} x_j \\
  e_{114} \cdot x_i x_j =& (j+4)(j+3)(j-4) x_i x_{j+6} + 2(i-1)(j+3)(j-4) x_{i+1} x_{j+5} + i(i-1)(j-4) x_{i+2} x_{j+4}  \\
   & \quad + (i-4)j(j-1) x_{i+4}x_{j+2} + 2(i+3)(i-4)(j-1) x_{i+5}x_{j+1} + (i+4)(i+3)(i-4) x_{i+6} x_j \\
  e_{33} \cdot x_i x_j  =&j (j-3) x_i x_{j+6} + 2 (i-3)(j-3) x_{i+3}x_{j+3} + i(i-3) x_{i+6} x_j\\
  e_{123} \cdot x_i x_j =& (j+4)(j+1)(j-3) x_i x_{j+6} + (i-1) (j+1)(j-3) x_{i+1} x_{j+5} + (i-2)(j+2)(j-3) x_{i+2} x_{j+4} \\
  &  \quad + \left[ (i+1)(i-2)(j-3) + (i-3) (j+1)(j-2) \right] x_{i+3} x_{j+3} + (i+2)(i-3) (j-2) x_{i+4} x_{j+2}  \\
   &  \quad + (i+1)(i-3) (j-1) x_{i+5} x_{j+1} + (i+4) (i+1)(i-3) x_{i+6} x_j 
     \end{align*}
   \begin{align*}
e_{1113} \cdot x_i x_j =& (j+4)(j+3)(j+2)(j-3) x_i x_{j+6} + 3 (i-1) (j+3) (j+2)(j-3) x_{i+1} x_{j+5} \\
    & \quad + 3 i(i-1)(j+2) (j-3) x_{i+2}x_{j+4}  + \left[ (i+1)i(i-1)(j-3) + (i-3)(j+1) j(j-1) \right] x_{i+3} x_{j+3} \\
     & \quad +3(i+2)(i-3) j (j-1) x_{i+4} x_{j+2}  + 3 (i+3)(i+2)(i-3) (j-1) x_{i+5} x_{j+1} \\
      & \quad + (i+4)(i+3)(i+2)(i-3) x_{i+6} x_j \\
  e_{222} \cdot x_i x_j  =&  (j+2) j(j-2) x_i x_{j+6} + 3 (i-2) j(j-2) x_{i+2} x_{j+4} + 3 i(i-2) (j-2) x_{i+4} x_{j+2}\\
  &  \quad + (i+2) i (i-2) x_{i+6} x_j \\
   e_{1122} \cdot x_i x_j =& (j+4)(j+3)j (j-2) x_i x_{j+6} + 2 (i-1)(j+3)j(j-2) x_{i+1} x_{j+5} \\
   & \quad + \left[ i(i-1) j(j-2) + 2 (i-2) (j+2)(j+1)(j-2)\right] x_{i+2} x_{j+4} \\
    & \quad  + 4 (i+1)(i-2) (j+1)(j-2) x_{i+3} x_{j+3} \\
    & \quad+ \left[ 2(i+2)(i+1)(i-2)(j-2) + i(i-2)j(j-1) \right] x_{i+4} x_{j+2} \\
   & \quad  + 2(i+3) i (i-2) (j-1) x_{i+5} x_{j+1}   + (i+4)(i+3)i (i-2) x_{i+6} x_j \\
   e_{11112} \cdot x_i x_j =& (j+4)(j+3)(j+2)(j+1)(j-2) x_i x_{j+6}  + 4 (i-1) (j+3) (j+2)(j+1)(j-2) x_{i+1} x_{j+5} \\
   & \quad + \left[6 i(i-1)(j+2)(j+1)(j-2) + (i-2) (j+2)(j+1)j(j-1) \right] x_{i+2} x_{j+4} \\
  & \quad + \left[4 (i+1) i(i-1) (j+1)(j-2) + 4 (i+1)(i-2) (j+1) j (j-1) \right] x_{i+3} x_{j+3}  \\
   & \quad + \left[ (i+2)(i+1) i(i-1) (j-2) +6 (i+2)(i+1)(i-2) j(j-1) \right] x_{i+4} x_{j+2 } \\
   & \quad + 4(i+3)(i+2)(i+1)(i-2) (j-1) x_{i+5} x_{j+1} + (i+4)(i+3)(i+2)(i+1)(i-2) x_{i+6} x_j \\
   e_{111111}\cdot x_i x_j =& (j+4)(j+3)(j+2)(j+1)j(j-1) x_i x_{j+6} + 6 (i-1) (j+3)(j+2)(j+1) j (j-1) x_{i+1} x_{j+5} \\
    & \quad + 15 i(i-1) (j+2)(j+1) j (j-1) x_{i+2} x_{j+4} + 20 (i+1) i(i-1) (j+1)j (j-1) x_{i+3} x_{j+3} \\
   & \quad + 15 (i+2)(i+1) i (i-1) j (j-1) x_{i+4} x_{j+2} + 6(i+3)(i+2)(i+1) i (i-1)(j-1) x_{i+5} x_{j+1} \\
    & \quad +  (i+4)(i+3)(i+2)(i+1) i (i-1) x_{i+6} x_j.
\end{align*}

 We rewrite these computations by defining  vectors $v_0, \dots, v_6$ in $\ZZ[i,j]^{11}$ so that $v_k$ consists of the coefficients of $x_{i+k} x_{j+6-k}$ in the expressions above; in other words we have the matrix equation  
 \beq\label{FOO} \sum_{\blambda \vdash 6} \alpha_{\blambda} e_{\blambda} \cdot x_ix_j =
\underline{\alpha} \begin{bmatrix} v_0 & v_1 &  v_2 &v_3 &v_4 &v_5 &v_6\end{bmatrix} 
 \begin{bmatrix} x_i x_{j+6} \\ x_{i+1} x_{j+5} \\ x_{i+2} x_{j+4} \\ x_{i+3} x_{j+3} \\ x_{i+4} x_{j+2} \\ x_{i+5} x_{j+1} \\ x_{i+6} x_{j} \end{bmatrix}, \eeq
 defining an element of $\S^2(W_+)$.  
  Explicitly, 
  \[ v_0=\begin{pmatrix} 
  	(j-6)\\	      	   	     	       
	  (j+4)(j-5)\\                   
	  (j+2)(j-4)\\     	    	 
	  (j+4)(j+3)(j-4)\\         	  
	  j(j-3)\\	                 
	  (j+4)(j+1)(j-3)\\             
	  (j+4)(j+3)(j+2)(j-3)\\       
	  (j+2)j(j-2)\\    	      	   
	  (j+4)(j+3)j(j-2)\\	       	   
	  (j+4)(j+3)(j+2)(j+1)(j-2)\\    
	  (j+4)(j+3)(j+2)(j+1)j(j-1)  
	\end{pmatrix}, \quad 
v_1=\begin{pmatrix} 0\\	      	   	     	       
	  (i-1)(j-5)\\                   
	  0\\     	   	   	 
	  2(i-1)(j+3)(j-4)\\         	  
	  0\\	                 
	  (i-1)(j+1)(j-3)\\             
	  3(i-1)(j+3)(j+2)(j-3)\\       
	  0\\    	      	    
	  2(i-1)(j+3)j(j-2)\\	       	   
	  4(i-1)(j+3)(j+2)(j+1)(j-2)\\    
	  6(i-1)(j+3)(j+2)(j+1)j(j-1)  
	 \end{pmatrix}, \mbox { etc.}\]
Note that the $v_k$ depend on $i$ and $j$.

Let $q_{ab}$ be the coefficient of $x_a x_b$ in \eqref{FOO}.  
If $j-i>6$ then the elements $x_i x_{j+6}, \dots, x_{i+6} x_j$ are distinct, and $q_{ab}$ may be read directly from \eqref{FOO}.  Slightly more generally, in fact, 
\beq\label{BAR}
\mbox{if $j > i+k$, then } q_{i+k, j+6-k} = \underline{\alpha} v_k .
\eeq 
However, if $j-i$ is small, \eqref{BAR} needs to be modified; for example, if $i=j$ then $q_{i,i+6} = \underline{\alpha} (v_0+v_6)$.

Let $f$ satisfy the hypotheses of the Lemma and write
\[ f = e_i e_j + \sum_{k=1}^{\lfloor( j - i)/2 \rfloor} \beta_k e_{i+k} e_{j-k}.\]
First assume that $j > i+6$.  
It follows from \eqref{BAR} that for $p = \sum_{\blambda \vdash 6} \alpha_{\blambda} e_{\blambda}$ we have
\beq
\label{jan} \pi_f(p) = \underline{\alpha} BCX\eeq
where 
\[ C = \begin{bmatrix} 1 & 0 & 0 & 0 \\ 0 & 1 & 0 & 0 \\ 0 & 0 & 1 & 0 \\ 0 & 0&0&1 \\
 0 & \beta_1 & 0 & 0 \\ 0&0&\beta_1 & 0 \\ 0 & 0 & 0 & \beta_1 \\
 0& 0 & \beta_2 & 0 \\ 0 & 0 & 0& \beta_2 \\
 0 & 0 & 0 & \beta_3
 \end{bmatrix}, \quad X = \begin{pmatrix} x_i x_{j+6} \\ x_{i+1} x_{j+5} \\ x_{i+2} x_{j+4} \\ x_{i+3} x_{j+3} \end{pmatrix},\]
and $B$ is the matrix with columns
\beq \label{ten}
 \begin{split} 
 v_{0}(i,j),v_{1}(i,j), v_{2}(i,j), v_{3}(i,j), v_{0}(i+1,j-1), v_{1}(i+1, j-1), \\
 v_{2}(i+1,j-1), v_{0}(i+2,j-2),v_{1}(i+2,j-2), v_{0}(i+3,j-3)
 \end{split}
 \eeq

 The statement of the proposition is equivalent to the statement that $BC$ has rank 4, and for this, since $C$ clearly has rank 4, it is sufficient that $B$ has (full) rank 10; in other words, we claim that for $i \gg 0$, the vectors in \eqref{ten} are linearly independent.   
 
 Let $X$ be the locus in the rational  $(i,j)$-plane $\Spec \Qbar[i,j]$ where the vectors \eqref{ten} are linearly independent.
  If $X \neq \Spec \Qbar[i,j]$, then $\Supp X$ consists of finitely many curves and finitely many isolated points, by primary decomposition.
  Computing in Macaulay2 (see Routine~A.1), we see that $X \neq \Spec \Qbar[i,j]$ and that these finitely many curves are the lines $i=-1,i=0, i=1, j=-1, j=1,$ and $i=j-3$.
  Our assumption that $j > i+6$ means that the condition $i=j-3$ is vacuous.
  Thus for $j-6 > i >1$, we avoid all of these curves, and increasing $i$ further we may avoid the finitely many isolated points in $\Supp X$.
  Thus there is some $N$ so that for $j-6 > i > N$, the vectors \eqref{ten} are linearly independent, and Lemma~\ref{lem:S2one} holds.
  Note that we do not need to compute the 0-dimensional components of $X$ unless we want to calculate $N$ exactly.
  
  This is the general case.  We now suppose that $j-i$ is small.    
  If $j=i+6$ we must modify the final column of $B$, replacing \eqref{jan} by
 \[ 
  \pi_f(p) = \underline{\alpha} B_6  C X,
 \] 
  where $B_6$ is the matrix whose columns are
  $$v_{0}(i,i+6),~v_{1}(i,i+6),~v_{2}(i,i+6),~v_{3}(i,i+6),~v_{0}(i+1,i+5),~v_{1}(i+1, i+5),$$
$$~v_{2}(i+1,i+5),~v_{0}(i+2,i+4),~v_{1}(i+2,i+4),{\rm~and~}(v_{0} + v_{6})(i+3,i+3). $$
By the Macaulay2 computation in Routine~A.2, this holds for $i \gg 0$, using similar arguments to those in the proof of the general case.

  If $j=i+5$ or $j=i+4$ then 
\[ f = x_i x_{j} + \beta_1 x_{i+1} x_{j-1} + \beta_2 x_{i+2} x_{j-2}.\]
If $j=i+5$,  \eqref{jan} is replaced by
\[ \pi_f(p ) = \underline{\alpha} B_5 C' X,\]
where
\[ C' = \begin{bmatrix} 1 & 0 & 0 & 0 \\ 0 & 1 & 0 & 0 \\ 0 & 0 & 1 & 0 \\ 0 & 0&0&1 \\
 0 & \beta_1 & 0 & 0 \\ 0&0&\beta_1 & 0 \\ 0 & 0 & 0 & \beta_1 \\
 0& 0 & \beta_2 & 0 \\ 0 & 0 & 0& \beta_2 
 \end{bmatrix}
 \]
 and $B_5$ has columns
  $$v_{0}(i,i+5),~v_{1}(i,i+5),~v_{2}(i,i+5),~v_{3}(i,i+5),~v_{0}(i+1,i+4),$$
$$v_{1}(i+1, i+4),~v_{2}(i+1,i+4),~v_{0}(i+2,i+3),{\rm~and~}(v_{1} + v_{6})(i+2,i+3).$$ 
  Again, it suffices to prove that $B_5$ has full rank for $i \gg 0$.
  This follows from the computation in Routine~A.3.
  
  If $j= i+4$ then \eqref{jan} becomes
  \[ \pi_f (p ) = \underline{\alpha} B_4 C' X,\]
  where $B_4$ has columns 
  \[ v_0(i, i+4),~v_1(i, i+4),~v_2(i, i+4),~v_3(i, i+4),~v_0(i+1,i+3),\]
  \[v_1(i+1,i+3),~(v_2+v_6)(i+1, i+3),~(v_0+v_6)(i+2,i+2),~(v_1+v_5)(i+2,i+2).\]
  This follows from the computation in Routine~A.4.
  
  If $j=i+3$ then \eqref{jan} becomes
  \[ \pi_f(p ) = \underline{\alpha} B_3 C'' X,\]
  where
\[ C'' = \begin{bmatrix} 1 & 0 & 0 & 0 \\ 0 & 1 & 0 & 0 \\ 0 & 0 & 1 & 0 \\ 0 & 0&0&1 \\
 0 & \beta_1 & 0 & 0 \\ 0&0&\beta_1 & 0 \\ 0 & 0 & 0 & \beta_1 
 \end{bmatrix}
 \]
 and $B_3$ has columns
   \[ v_0(i, i+3),~v_1(i, i+3),~v_2(i, i+3),~(v_3+v_6)(i, i+3),~v_0(i+1,i+2),~(v_1+v_6)(i+1,i+2),~(v_2+v_5)(i+1, i+2).\]
 This follows from Routine~A.5.  
 
 If $j = i+2 $ then we have
 \[ \pi_f(p ) = \underline{\alpha} B_2 C'' X,\]
 where $B_2$ has columns
  \[ v_0(i, i+2),~v_1(i, i+2),~(v_2+v_6)(i, i+2),~(v_3+v_5)(i, i+2),~(v_0+v_6)(i+1,i+1),~(v_1+v_5)(i+1,i+1),~(v_2+v_4)(i+1, i+1).\]
 This follows from Routine~A.6.  
 
  If  $j=i+1$ or $j=i$, then $f = x_i x_j$.  We have 
  \[  \pi_f(p ) = \begin{cases} B_1  X & \quad j =i+1 \\
  B_0 X & \quad j = i,\end{cases} \]
  where 
  \[ B_1 = \begin{bmatrix} v_0(i,i+1) & (v_1+v_6)(i,i+1) & (v_2+v_5)(i,i+1) & (v_3+v_4)(i,i+1) \end{bmatrix},\]
  and
  \[ B_0 = \begin{bmatrix} (v_0+v_6)(i,i) & (v_1 + v_5)(i,i) & (v_2+v_4)(i,i) & v_3(i,i)\end{bmatrix}.\]
  
 The result follows similarly from the computations in Routines A.7 and~A.8.

 \end{proof}

 \begin{corollary}\label{cor:new}
 The algebra $\S(W_+)$ satisfies the ascending chain condition  on Poisson ideals generated by quadratic elements. 
 \qed
 \end{corollary}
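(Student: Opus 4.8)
The plan is to deduce the corollary directly from Theorem~\ref{thm:S2}, via the correspondence between Poisson ideals of $\S(W_+)$ generated by quadratic elements and $W_+$-submodules of $\S^2(W_+)$.

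First I would record that the Poisson bracket $\{x_i,-\}$ maps $\S^2(W_+)$ into itself: this is immediate from $\{x_i,x_jx_k\}=(j-i)x_{i+j}x_k+(k-i)x_jx_{i+k}$, or conceptually from the fact that the order gradation satisfies $\{\S^1(W_+),\S^2(W_+)\}\subseteq\S^2(W_+)$. Consequently, if $I$ is any Poisson ideal of $\S(W_+)$ then $I\cap\S^2(W_+)$ is a $W_+$-submodule of $\S^2(W_+)$. Moreover, for a $W_+$-submodule $N\subseteq\S^2(W_+)$ the Poisson ideal $\langle N\rangle$ it generates is monotone in $N$; in fact it coincides with the ordinary ideal $N\cdot\S(W_+)$, since the biderivation property forces $N\cdot\S(W_+)$ to be closed under $\{g,-\}$ for arbitrary $g$ (reduce $\{g,nh\}=\{g,n\}h+n\{g,h\}$ and then, writing $g$ as a sum of monomials in the $x_i$, reduce $\{g,n\}$ to the $W_+$-action on $N$ by the Leibniz rule), though we will not strictly need this explicit description.

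The key observation is then: if $I$ is a Poisson ideal generated by a set $S$ of quadratic elements, then $I=\langle I\cap\S^2(W_+)\rangle$. Indeed, $S\subseteq I\cap\S^2(W_+)$ gives $I=\langle S\rangle\subseteq\langle I\cap\S^2(W_+)\rangle$, and the reverse inclusion holds because $I\cap\S^2(W_+)\subseteq I$ and $I$ is a Poisson ideal. Now, given an ascending chain $I_1\subseteq I_2\subseteq\cdots$ of Poisson ideals of $\S(W_+)$ each generated by quadratic elements, set $M_n:=I_n\cap\S^2(W_+)$. This is an ascending chain of $W_+$-submodules of $\S^2(W_+)$, which stabilises by Theorem~\ref{thm:S2}; say $M_n=M$ for $n\gg 0$. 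Then $I_n=\langle M_n\rangle=\langle M\rangle$ for $n\gg 0$, so the original chain stabilises.

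There is no serious obstacle here --- the substance of the result is Theorem~\ref{thm:S2} --- and the only points requiring care are the two easy verifications above: that bracketing with $x_i$ does not increase order (so that $I\cap\S^2(W_+)$ really is a $W_+$-submodule), and that a Poisson ideal generated by elements of $\S^2(W_+)$ is recovered from its intersection with $\S^2(W_+)$.
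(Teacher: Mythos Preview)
Your proof is correct and is exactly the argument the paper has in mind: the corollary is marked with a bare \qed\ immediately after Theorem~\ref{thm:S2}, so the paper regards it as an immediate consequence via the correspondence $I\mapsto I\cap\S^2(W_+)$ you describe. You have simply spelled out the routine details (that $I\cap\S^2(W_+)$ is a $W_+$-submodule and that $I=\langle I\cap\S^2(W_+)\rangle$ for $I$ generated in order two) which the paper leaves implicit.
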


 \begin{remark}\label{rem:Sm} 
 Each $\S^m(W_+)$ is also $d$-graded. 
 Fix $m$.  For any $d$, $\dim \S^m(W_+)_d = O(m^{d-1})$.  
 On the other hand $\dim \U(W_+)_d  = P(d)$, the partition number of $d$.
 We do not know if all $\S^m(W_+)$ are finitely generated, though it is certainly plausible, since if $d$ is sufficiently large $\dim \U(W_+)_d$ is much larger than $\dim \S^n(W_+)_d$.    In fact, we  conjecture that $\S^m(W_+)$ is noetherian for all $m$.
 
 Note that if $f \in \S^m(W_+)$ and $\{(f)\}$ is the smallest Poisson ideal containing $f$, then $\{(f)\} \cap \S^m(W_+)$ is equal to the subrepresentation of $\S^m(W_+)$ generated by $f$.
 Thus our conjecture would follow if the ascending chain condition holds for Poisson ideals of $\S(W_+)$; note this last is stronger than Conjecture~\ref{conj2}. 
 \end{remark}
 
 \subsection{Quotients by quadratic elements}
 
 In this section we make more careful use of the computations in the previous subsection to show that if $f $ is a nonzero homogeneous element of $\S^2(W_+)$ and $J$ is a Poisson ideal of $\S(W_+)$ containing $f$, then $\dim \left( \S(W_+)/J \right)_n$ has polynomial growth, see Proposition~\ref{prop:part3}.  
 It follows that if $p$ is any order 2 element of $\U(W_+)$, then $\GK \U(W_+)/(p) < \infty$, see Corollary~\ref{cor:GKsofar}.
 
 We begin by establishing notation.  
 Recall the terminology of Definition~\ref{def:growth}.  
 If $f:  \NN \to \RR^+$ is a function with $\sG(f) \leq \sP(d)$ for some $d \in \NN$, we say that 
  $f(n) = O(n^d)$ and that $f$ has {\em polynomial growth}.
 
 For $k \in \NN$, 
  let $P_k(n)$ be the number of partitions of $n$ in which all parts are size $\leq k$.
  Recall that by \cite[Corollary~1.4.3.10]{Stanley}, 
 \beq\label{part} P_k(n) = O(n^{k-1}) . \eeq

 Given  $k, \ell \in \NN$ with $1 \leq k \leq \ell$, let $J(k,\ell)$ be the ideal
 of $\S(W_+)$ generated by $\{ x_i x_j : i \geq k, j-i \geq \ell -k\}$.
 
 \begin{lemma}\label{lem:part2}
 Let  $k, \ell \in \NN$ with $1 \leq k \leq \ell$.
 Then $\dim ( \S(W_+)/J(k,\ell))_n = O(n^{\ell -1})$.
 \end{lemma}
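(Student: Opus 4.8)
The plan is to argue monomial by monomial. Since $J(k,\ell)$ is generated by monomials, $\S(W_+)/J(k,\ell)$ has a $\kk$-basis given by the images of the monomials of $\S(W_+)=\kk[x_1,x_2,\dots]$ not lying in $J(k,\ell)$; as $J(k,\ell)$ is $d$-graded, it follows that $(\S(W_+)/J(k,\ell))_n$ has a basis consisting of the ``surviving'' monomials of $d$-degree $n$. I would first record when a monomial survives: writing a monomial as $x_{a_1}\cdots x_{a_r}$ with $a_1\le\cdots\le a_r$, it lies in $J(k,\ell)$ exactly when there are indices $s<t$ with $k\le a_s$ and $a_t-a_s\ge \ell-k$ (equivalently, two of its factors form one of the generators). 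Consequently a surviving monomial factors uniquely as $\mu\nu$, where $\mu$ is a monomial in $x_1,\dots,x_{k-1}$ and $\nu$ is a monomial in $x_k,x_{k+1},\dots$ with the property that any two of its factors $x_i,x_j$ satisfy $|i-j|<\ell-k$. When $\ell=k$ this last condition forces $\nu$ to have at most one factor, so there the surviving monomials of degree $n$ are the partitions of $n$ into parts $<k$ together with the products $\mu x_p$ with $p\ge k$, giving $\dim(\S(W_+)/J(k,\ell))_n\le P_{k-1}(n)+\sum_{j=0}^{n}P_{k-1}(j)=O(n^{k-1})=O(n^{\ell-1})$ by \eqref{part}. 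So from now on I would assume $\ell>k$ and set $w:=\ell-k\ge 1$.

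For $w\ge 1$ I would split the surviving monomials of degree $n$ according to whether $\nu=1$. Those with $\nu=1$ are the partitions of $n$ into parts $<k$; there are $P_{k-1}(n)=O(n^{k-2})$ of them by \eqref{part} ($\le 1$ if $k=1$), which is $O(n^{\ell-1})$. For a surviving monomial with $\nu\ne 1$, let $t\ge k$ be the smallest index occurring in $\nu$ and let $c_0,\dots,c_{w-1}$ be the exponents of $x_t,\dots,x_{t+w-1}$ in $\nu$, so $c_0\ge 1$ and $\nu=\prod_{j}x_{t+j}^{c_j}$ (the width condition guarantees $\nu$ has this form). The key observation is that the monomial is recovered from the pair $\bigl(\mu,(c_0,\dots,c_{w-1})\bigr)$: indeed $d(\nu)=n-d(\mu)$ is determined by $\mu$, and then $t=\bigl(d(\nu)-\sum_j jc_j\bigr)\big/\sum_j c_j$ is determined. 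Since every factor of $\nu$ has index $\ge k$ we get $d(\nu)\ge k\sum_j c_j$, so $\sum_j c_j\le n/k$; also $d(\mu)\le n$. Hence the number of surviving monomials of degree $n$ with $\nu\ne 1$ is at most
\[
\binom{\lfloor n/k\rfloor+w}{w}\cdot\sum_{j=0}^{n}P_{k-1}(j)=O(n^{w})\cdot O(n^{k-1})=O(n^{\ell-1}),
\]
using \eqref{part} again for the second factor (which is $1$ when $k=1$). Adding the two contributions yields $\dim(\S(W_+)/J(k,\ell))_n=O(n^{\ell-1})$.

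I do not anticipate a genuine obstacle here: the whole argument rests on the elementary partition estimate \eqref{part}. The two places that need care are the precise description of the surviving monomials --- in particular the degenerate cases $\ell=k$, where $x_i^2\in J(k,k)$, and $k=1$, where there is no ``small'' factor $\mu$ --- and the observation that makes the bound polynomial, namely that the ``large part'' $\nu$ of a surviving monomial of degree $n$ is pinned down by $n$ together with a vector of at most $\ell-k$ exponents.
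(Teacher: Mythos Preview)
Your proof is correct and follows essentially the same approach as the paper: both count the surviving monomials by splitting each into a factor $\mu$ in $x_1,\dots,x_{k-1}$ and a ``narrow'' factor $\nu$ in the higher variables, then apply the partition estimate \eqref{part}. The only difference is bookkeeping---the paper keeps the base index of $\nu$ as a free parameter and eliminates the exponent of $x_1$, whereas you observe that the base index $t$ is determined by $(\mu,c_0,\dots,c_{w-1})$---and both routes give the same $O(n^{\ell-1})$ bound.
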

 \begin{proof}
 Since $J(k, \ell)$ is a monomial ideal, it suffices to count the monomials not in $J(k, \ell)$.
 Write a monomial in $\S(W_+)$ as $x_{\blambda} = x_{\lambda_1} \cdots x_{\lambda_d}$, where $\blambda =(\lambda_1 \leq \dots \leq\lambda_d)$ is a partition.
 Further, if $\lambda_{d-1} \geq k$, set $$e= e(\blambda) = \min (j: \lambda_j \geq k).$$
 
 There are three ways to have $x_{\blambda} \not \in J(k, \ell)$.
 Let
 \[ f_1(n) = \# \{ \blambda \vdash n : \lambda_d < k \},\]
 \[ f_2(n) = \# \{ \blambda \vdash n:  \lambda_d \geq k, \lambda_{d-1} < k \},\]
 \[ f_3(n) = \# \{ \blambda \vdash n: \lambda_{d-1} \geq k, \lambda_d - \lambda_e < \ell - k.\}\]
 Then $$\dim (\S(W_+)/J(k, \ell))_n = f_1(n) + f_2(n) + f_3(n),$$ so to prove the result we must estimate the growth of $f_1, f_2, f_3$.
 
 Clearly $f_1(n) = P_{k-1}(n) = O(n^{k-2})$ by \eqref{part}.
 We have 
 \[ f_2(n) = \sum_{m < n-k} P_{k-1}(m) = O(n^{k-1}),\]
 again as a consequence of \eqref{part}.
 Finally, if $\ell=k$ then $f_3(n) = 0$, so we may assume that $\ell > k$.
 Then partitions counted by $f_3$ involve, for some $b \geq k$, only the numbers $$1, 2, \dots, k-1, b, b+1, \dots, b+\ell-k-1.$$
 Thus $f_3(n) $ is less than or equal to the number of ways to write
 \[ n = \sum_{i=1}^{k-1} a_i i + \sum_{j=0}^{\ell -k -1} b_j(b+j),\]
 for $a_i, b_j \geq 0, b \geq k$.  
 
 In the equation above $a_1$ is determined by $$a_2, \dots, a_{k-1}, b_0, b_{\ell-k-1}, b.$$  
 As each of the $a_i, b_j, b \leq n$, we have that $f_3(n) \leq n^{\ell-1}$.
 This proves the result.
 \end{proof}
 
 \begin{lemma}\label{lem:kl}
 Let $f$ be a nonzero element of $\S^2(W_+)$, and let $M = \U(W_+) \cdot f$.  There is some $\gamma_0  \in \Gamma$ so that $\gamma(M) \supseteq \gamma_0 + \Gamma$.
 \end{lemma}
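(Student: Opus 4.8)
The plan is to exploit two ``shift'' operations on the set $\gamma(M)\subseteq\Gamma$ of leading-term degrees of elements of $M$. First I would reduce to the case that $f$ is $d$-homogeneous: this is harmless, since if $f_{\mathrm{top}}$ denotes the top-degree $d$-homogeneous component of $f$, then $\gamma(p\cdot f)=\gamma(p\cdot f_{\mathrm{top}})$ for every $d$-homogeneous $p$ with $p\cdot f_{\mathrm{top}}\ne0$, whence $\gamma(M)\supseteq\gamma(\U(W_+)\cdot f_{\mathrm{top}})$, so it suffices to treat $f_{\mathrm{top}}$; after this reduction $M$ is $d$-graded and we may work with $d$-homogeneous elements throughout. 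The first shift: if $g\in M$ is $d$-homogeneous with $\gamma(g)=(i,j)$ and $j\ge2$, then all monomials of $g$ other than its leading term $x_ix_j$ have first index $>i$, and since $e_1$ raises indices the first-index-$i$ part of $e_1\cdot g$ is $(j-1)x_ix_{j+1}\ne0$, so $\gamma(e_1\cdot g)=(i,j+1)$; iterating, $(i,j+r)\in\gamma(M)$ for all $r\ge0$. The second shift: if moreover $i\ge2$, $j-i>6$, and $j$ exceeds a certain absolute constant, then by (the proof of) Lemma~\ref{lem:S2one} the map $\pi_g\colon\U(W_+)_6\to\VV_{ij}$ is surjective — its failure locus being, by the computations there, contained in the lines $i\in\{-1,0,1\}$, $j\in\{-1,1\}$, $i=j-3$ together with finitely many isolated points, all of which are avoided under our hypotheses — so we may choose $p\in\U(W_+)_6$ realizing any prescribed image in $\VV_{ij}=\mathrm{span}(x_ix_{j+6},x_{i+1}x_{j+5},x_{i+2}x_{j+4},x_{i+3}x_{j+3})$; as $p\cdot g$ then has all monomials of first index $\ge i$, we obtain elements of $M$ with $\gamma$ equal to $(i+1,j+5)$, $(i+2,j+4)$, or $(i+3,j+3)$, according as we kill the first one, two, or three of these coordinates.

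After replacing $f$ by $e_2\cdot f$ in the remaining case $\gamma(f)=(1,1)$, we may assume $\gamma(f)=(a,b)$ with $b\ge2$, and then $(a,B)\in\gamma(M)$ for $B$ arbitrarily large. The crucial step — which I expect to be the main obstacle — is to produce an element of $M$ whose leading term has first index at least $2$. If $a\ge2$ this is immediate. If $a=1$, the second shift does not apply, since $i=1$ lies in the failure locus of Lemma~\ref{lem:S2one}; instead I would argue as follows. For $h\in M$ $d$-homogeneous with $\gamma(h)=(1,B)$ and any $\ell\ge1$, the first-index-$1$ part of $q\cdot h$ (for $q\in\U(W_+)_\ell$) equals $\psi_\ell(q)\,x_1x_{B+\ell}$ for a linear functional $\psi_\ell$ on $\U(W_+)_\ell$ that is nonzero because $\psi_\ell(e_\ell)=B-\ell\ne0$ — again since all other monomials of $h$ already have first index $\ge2$ and $\U(W_+)$ acts by raising operators. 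Hence any $q\in\ker\psi_\ell$ carries $h$ into the span of monomials of first index $\ge2$, and such a $q$ with $q\cdot h\ne0$ exists for some $\ell$ unless $\ker\psi_\ell$ is contained in the degree-$\ell$ annihilator of $h$ for every $\ell$, i.e.\ unless $\dim(\U(W_+)_\ell\cdot h)\le1$ for all $\ell$. Thus the argument reduces to showing that no nonzero $d$-homogeneous element of $\S^2(W_+)$ generates a $W_+$-submodule all of whose graded components have dimension $\le1$. This last fact should follow from a rank estimate of the same nature as Lemma~\ref{lem:S2one} — for instance, the explicit formulas in its proof show that $e_6\cdot v$ and $e_{33}\cdot v$ are linearly independent for generic $v$, and one checks that the $2\times2$-minor ideal of the corresponding coefficient matrix cuts out only $v=0$ — together with a direct check in low degrees (e.g.\ $\U(W_+)\cdot x_1^2$, the most degenerate case, already has a two-dimensional graded component in degree $6$).

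With an element of $M$ of leading first index $\ge2$ in hand, I would finish by assembling a translated copy of $\Gamma$. Interposing applications of $e_1$ to keep the second index large, iteration of the second shift produces a seed $(i_0,j_0)\in\gamma(M)$ with $i_0$ as large as we please — in particular larger than every first coordinate occurring among the finitely many bad isolated points, so that all subsequent applications of the second shift (at first index $\ge i_0$) are legitimate irrespective of the second index — and with $j_0-i_0$ as large as we please, in particular $j_0-i_0\ge7$. Applying the $(i,j)\mapsto(i+1,j+5)$ and $(i,j)\mapsto(i+2,j+4)$ shifts followed by suitable powers of $e_1$, we then obtain three aligned seeds $(i_0,i_0+c),(i_0+1,i_0+1+c),(i_0+2,i_0+2+c)\in\gamma(M)$ for a common $c\ge7$. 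Since $\gamma(M)$ is closed under adding $(0,1)$ and, at first index $\ge2$ with $j-i>6$, under adding $(3,3)$, applying the subsemigroup of $\NN\times\NN$ generated by $(0,1)$ and $(3,3)$ to these three seeds yields every $(s,t)$ with $s\ge i_0$ and $t-s\ge c$; as the $(3,3)$-steps preserve $t-s$ and the first coordinates of the bad isolated points are all $<i_0$, every step used is legal. This set equals $\gamma_0+\Gamma$ for $\gamma_0=(i_0-1,\,i_0-1+c)\in\Gamma$, proving the lemma.
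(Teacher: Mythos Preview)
Your strategy is the paper's: the $(0,1)$ shift from $e_1$, the shifts $(1,5),(2,4),(3,3)$ coming from the surjectivity of $\pi_g$ in Lemma~\ref{lem:S2one}, and then a three-seed assembly via the semigroup $\Sigma=\langle(0,1),(3,3)\rangle$. The paper compresses all of this to one paragraph: it simply asserts, citing the proof of Theorem~\ref{thm:S2}, that $\gamma(M)$ already contains $\gamma(f)+\{(0,1),(1,5),(2,4),(3,3)\}$ and is $\Sigma$-stable, and takes $\gamma_0=\gamma(f)+(0,4)$. The paper does not discuss the hypothesis $i\ge N$ of Lemma~\ref{lem:S2one} at all; you have noticed that the failure locus in that lemma contains the whole line $i=1$, and your longer argument --- first escaping to first index $\ge 2$, then iterating up to a safe $i_0$ --- is built precisely to work around this. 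In that sense you are being more scrupulous than the paper itself, and your reduction to $d$-homogeneous $f$, the two shifts, and the final three-seed assembly are all correct.

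The genuine gap is exactly where you flag it. Your reduction of the case $a=1$ to the claim ``no nonzero $d$-homogeneous $h\in\S^2(W_+)$ has $\dim(\U(W_+)_\ell\cdot h)\le1$ for every $\ell$'' is valid and clean. But the verification you propose does not work as stated: for a general $v=\sum_a c_a\,x_a x_{D-a}\in\S^2(W_+)_D$, the locus where $e_6\cdot v$ and $e_{33}\cdot v$ are linearly dependent is a determinantal subvariety of the $\lfloor D/2\rfloor$-dimensional coefficient space, and it has positive dimension once $D$ is at all large --- so no single fixed pair of degree-$6$ operators can witness rank $\ge2$ uniformly in $v$, and the $2\times2$-minor ideal certainly does not cut out only $v=0$. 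To finish you must either let the witnessing operators depend on $h$ (e.g.\ exhibit, for each $h$ with $\gamma(h)=(1,B)$, an explicit $q\in\U(W_+)_\ell$ in $\ker\psi_\ell$ with $q\cdot h\ne0$ and verify nonvanishing directly), or give a structural argument ruling out an embedding of $\U(W_+)\cdot h$ into the one-dimensional-per-degree module $S(1)/S(2)$. Either route needs more than the sketch you give.
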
 
 \begin{proof}
 As in the proof of Theorem~\ref{thm:S2}, let $\Sigma$ be the sub-semigroup of $\NN\times \NN$ generated by $\{(0,1), (3,3)\}$.  
 Since by the proof of Theorem~\ref{thm:S2} $\gamma(M)$ is a $\Sigma$-subrepresentation of $\Gamma$, the elements 
  $$\gamma(f) +(0,1), \gamma(f)+(1,5), \gamma(f)+(2,4), \gamma(f)+(3,3) $$
  are in $\gamma(M)$.  
  Thus 
   \begin{multline*}
 \gamma(M) \supseteq (\gamma(f)+(0,4) + \Sigma) \cup (\gamma(f)+(1,5) + \Sigma) \cup (\gamma(f)+(2,6)+ \Sigma) \\
 = \gamma(f)+(0,4) + \bigl(\Sigma \cup ( (1,1) + \Sigma ) \cup 
 ( (2,2)+ \Sigma)\bigr) = \gamma(f)+(0,4) + \Gamma.
 \end{multline*}
 Thus we may take $\gamma_0 = \gamma(f) + (0,4)$.
 \end{proof}

 \begin{proposition}\label{prop:part3}
 Let $J$ be a  Poisson ideal of $\S(W_+)$ that contains an element of order less than or equal to two.
 Then 
 \[\PGK(\S(W_+)/J) < \infty.\]
 \end{proposition}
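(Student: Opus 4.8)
The plan is to bound $\PGK(\S(W_+)/J)$ from above by controlling the growth of $\dim \mc F^N(\S(W_+)/J)$, where $\mc F^N\S(W_+)=\{g : d(g)\le N\}$ is the filtration by the degree gradation and $\mc F^N(\S(W_+)/K)$ denotes the image of $\mc F^N\S(W_+)$. The point is that for $V=\kk\cdot(1,x_1,x_2)$ one has $V^{\{n\}}\subseteq\mc F^{2n}\S(W_+)$, since $\{x_i,-\}$ raises $d$-degree by $i\le 2$, and $V$ generates $\S(W_+)$ as a Poisson algebra because $x_n\in\kk\cdot\{x_1,x_{n-1}\}$ for $n\ge 3$; hence $\PGK(\S(W_+)/K)\le\varlimsup_n\log_n\dim\mc F^{2n}(\S(W_+)/K)$ for every Poisson ideal $K$. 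So it suffices to show $\dim\mc F^N(\S(W_+)/J)=O(N^{c})$ for some $c$.

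The first step is to reduce to the case where $J$ is $d$-graded. Since the Poisson bracket is homogeneous of degree $0$ for the degree gradation, $J^{\mathrm{top}}:=\gr_d J$ is again a $d$-graded Poisson ideal of $\S(W_+)$ (this kind of passage is already used in Theorem~\ref{thm:GKW2} and before Corollary~\ref{cor:acc}), and passing to the associated graded of a filtration by finite-dimensional spaces does not change $\dim\mc F^N$, so $\dim\mc F^N(\S(W_+)/J)=\dim\mc F^N(\S(W_+)/J^{\mathrm{top}})$ for all $N$. As $J^{\mathrm{top}}$ contains the top-$d$-degree component $f_{\mathrm{top}}\ne 0$ of our order-$\le 2$ element $f$, with $o(f_{\mathrm{top}})\le 2$, I may replace $J$ by $J^{\mathrm{top}}$. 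Thus from now on $J$ is $d$-graded and contains a nonzero $d$-homogeneous $h$ with $o(h)\in\{1,2\}$ (the case $o(h)=0$ forces $J=\S(W_+)$). If the order-$2$ part of $h$ vanishes, then $h$ is a $d$-homogeneous linear form, i.e.\ $h=cx_n$; applying $\{x_j,-\}$ repeatedly, as in the proof of Lemma~\ref{Lalev} the Lie ideal generated by $e_n$ contains $e_{\ge n+2}$, so $x_n$ and all $x_m$ with $m\ge n+2$ lie in $J$, and $\S(W_+)/J$ is a quotient of the polynomial ring $\kk[x_1,\dots,x_{n-1},x_{n+1}]$; then $\dim\mc F^N(\S(W_+)/J)=O(N^{n})$ and we are done.

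So assume the order-$2$ part $h^{(2)}$ of $h$ is a nonzero (necessarily $d$-homogeneous) element of $\S^2(W_+)$. Here I invoke Lemma~\ref{lem:kl}: there is $\gamma_0=(k_0,\ell_0)\in\Gamma$ with $\gamma(\U(W_+)\cdot h^{(2)})\supseteq\gamma_0+\Gamma$. For each $(a,b)\in\gamma_0+\Gamma$ I pick a $d$-homogeneous $p\in\U(W_+)$ with $\gamma(p\cdot h^{(2)})=(a,b)$ and set $g_{a,b}:=p\cdot h\in J$. Because $h$ is $d$-homogeneous, $g_{a,b}$ is $d$-homogeneous of degree $a+b$ (the degree of its order-$2$ component $p\cdot h^{(2)}$), so its order-$\le 1$ component also sits in degree $a+b$, and $p\cdot h^{(2)}=x_ax_b+(\text{quadratic monomials }x_{a'}x_{b'}\text{ with }(a',b')\prec(a,b))$. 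Now equip $\S(W_+)$ with the monomial order comparing monomials first by order $o$, then by $d$-degree, then by the lexicographic order with $x_1>x_2>\cdots$: this is a genuine multiplicative monomial order whose restriction to quadratic monomials is exactly $\prec$. Then $\LT(g_{a,b})=x_ax_b$ (the order-$2$ component dominates), and for every monomial $x_\mu$ with $d(x_\mu)+a+b\le N$ we get $x_\mu g_{a,b}\in J\cap\mc F^N\S(W_+)$ with $\LT(x_\mu g_{a,b})=x_\mu x_ax_b$. Counting leading monomials, $\dim(J\cap\mc F^N\S(W_+))$ is at least the number of monomials of degree $\le N$ lying in the monomial ideal generated by $\{x_ix_j : i\ge k_0+1,\ j-i\ge \ell_0-k_0\}$, which in the notation of Lemma~\ref{lem:part2} is $J(k_0+1,\ell_0+1)$. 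Hence
\[ \dim\mc F^N(\S(W_+)/J)\ \le\ \dim\mc F^N\bigl(\S(W_+)/J(k_0+1,\ell_0+1)\bigr)\ =\ \textstyle\sum_{n\le N}\dim\bigl(\S(W_+)/J(k_0+1,\ell_0+1)\bigr)_n\ =\ O(N^{\ell_0+1}) \]
by Lemma~\ref{lem:part2}, so $\PGK(\S(W_+)/J)\le\ell_0+1<\infty$.

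The main obstacle is the interaction of the two gradations: Lemma~\ref{lem:kl} only tells us about the order-$2$ part of elements of $\U(W_+)\cdot f$, while the growth estimate is governed by the $d$-degree, and a priori the order-$\le 1$ part of $p\cdot f$ could live in arbitrarily high $d$-degree, destroying the count. The two devices that remove this difficulty are (i) replacing $J$ by $\gr_d J$, so that we only ever act on a $d$-homogeneous element $h$, which pins every component of $p\cdot h$ into one $d$-degree, and (ii) using a monomial order that gives priority to the order filtration, so that the leading term of $g_{a,b}$ is read off from its order-$2$ component irrespective of the lower-order baggage. Once these are in place the argument reduces cleanly to the combinatorial Lemma~\ref{lem:part2}, the genuinely hard computation having already been carried out in Lemma~\ref{lem:S2one} and Theorem~\ref{thm:S2}.
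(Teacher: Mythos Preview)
Your argument is correct and follows the same overall architecture as the paper's proof: invoke Lemma~\ref{lem:kl} to produce, for every $(a,b)$ in a translate of $\Gamma$, an element of $J$ whose leading term is $x_ax_b$; then use a multiplicative monomial order to propagate these to all monomials in $J(k,\ell)$; finally apply the combinatorial bound of Lemma~\ref{lem:part2}.

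Where you diverge from the paper is in two technical choices, and both are genuine improvements. The paper applies Lemma~\ref{lem:kl} directly to $J\cap\S^2(W_+)$, implicitly taking for granted that this intersection is nonzero; but the hypothesis only supplies an element of order $\le 2$, whose $\S^2$-component need not lie in $J$. You sidestep this by first passing to $\gr_d J$, so that your element $h$ is $d$-homogeneous, and then by choosing a monomial order that compares the order $o$ before the degree $d$. This guarantees that the leading term of $g_{a,b}=p\cdot h$ is read off from its quadratic part $p\cdot h^{(2)}$, regardless of any linear tail. With the paper's degree-graded order one has $x_{a+b}\succ x_ax_b$, so a stray linear term would spoil the leading-term calculation; your order-first refinement removes that obstruction. (As a side remark: once one has reduced to $d$-homogeneous $h$ as you do, one can also recover the paper's setup directly, since $\{x_{d(h)},h\}=\{x_{d(h)},h^{(2)}\}$ is a nonzero element of $J\cap\S^2(W_+)$; but your route via the order-first monomial order is equally clean and avoids this extra step.)
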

 \begin{proof}
 If $J$ contains an element of order $1$ the result is implied by Lemma~\ref{Lalev}.  For the order 2 case, we will use Lemma~\ref{lem:kl}.
 
 Let $\blambda = (\lambda_1 \leq   \dots \leq \lambda_k)$ be a partition, and write 
 $x_{\blambda} = x_{\lambda_1} \dots x_{\lambda_k}$ as \[ x_1^{m_1} x_2^{m_2} \dots x_{\lambda_k}^{m_{\lambda_k}}\] for some $m_1, \dots, m_k \in \NN$.
  Define $$\underline {m}(\blambda)  = (m_1, \dots, m_{\lambda_k}, 0, \dots) \in \NN^{\oplus \NN}.$$ 
  
 Let $\prec$ be the graded reverse lexicographic order on partitions.
 That is, if $\blambda, \bmu$ are partitions we say $\blambda \prec \bmu$ if either $\abs{\blambda} < \abs{\bmu}$ or $\abs{\blambda} = \abs{\bmu}$ and the rightmost nonzero entry of $\underline{m}(\bmu)-\underline{m}(\blambda)$ is positive.
 (Alternately, if $\abs{\blambda} = \abs{\bmu}$ then $\blambda < \bmu$ if and only if the reversed sequence $\blambda^{\op} = (\lambda_k, \dots, \lambda_1)$ precedes $\bmu^{\op}$ in lexicographic order.)
 Note that $\prec$ generalises the order defined previously on 
  $$\Gamma = \{ (i,j) \in \ZZ^2 : 1 \leq i \leq j\},$$ the grading semigroup for $\S^2(W_+)$.
  
  We also define $\prec$ on monomials in $\S(W_+)$ by saying $x_{\blambda} \prec x_{\bmu}$ if $\blambda \prec \bmu$.
  Note that $\prec$ is a monomial ordering in the sense of \cite[Definition 2.2.1]{CLO}:
  that is, if $\blambda, \bmu, \bnu$ are partitions and $x_{\blambda} \prec x_{\bmu}$, then $$x_{\blambda} x_{\bnu} \prec x_{\bmu} x_{\bnu}.$$
  If $f \in \S(W_+)$, write $\LT(f)$ for the largest monomial in $f$ in the $\prec$ ordering.
 
Since $J \cap \S^2(W_+)$ is a $\U(W_+)$-submodule of $\S^2(W_+)$, by Lemma~\ref{lem:kl}  we have $(k + \ell)+\Gamma \subseteq \gamma(J\cap \S^2(W_+))$ for some $(k, \ell) \in \Gamma$. 
Thus for $i \geq k$, $j-i \geq \ell -k$, there is some $$f_{ij} \in J \cap \S^{\leq 2}(W_+)$$ with $\LT(f_{ij}) = x_ix_j$.
 Since  $\prec$ is a monomial ordering,  for all partitions  $\blambda $ with  $x_{\blambda}\in J(k, \ell)$ there is some $f_{\blambda} \in J$ with  $\LT(f_{\blambda}) = x_{\blambda}$.
 Thus for any $g \in \S(W_+)$, by successively subtracting scalar multiples of the $f_{\blambda}$ we see that there is $g' \in \S(W_+)$ so that $g -g' \in J$ and so that $g'$ is a sum of monomials not in $J(k,\ell)$; further, $d(g) \leq d(f)$.
 
 For a Poisson ideal $K$ of $\S(W_+)$ define $$(\S(W_+)/K)_{\leq n} = \S(W_+)_{\leq n}/(\S(W_+)_{\leq n} \cap K).$$  This is a discrete, finite, exhaustive filtration on $\S(W_+)/K$.
 By Lemma~\ref{lem:PGKfilter} and Remark~\ref{rem:altfilter}, $$\PGK \S(W_+)/J \leq \varlimsup \log_n \dim (\S(W_+)/J)_{\leq n}.$$
 From the previous paragraph, 
  $$\dim (\S(W_+)/J)_{\leq n} \leq \dim (\S(W_+)/J(k,\ell))_{\leq n} = O(n^{\ell})$$ by Lemma~\ref{lem:part2}.
Thus  $$\varlimsup \log_n \dim (\S(W_+)/J)_{\leq n} \leq \varlimsup \log_n \dim (\S(W_+)/J(k , \ell))_{\leq n} \leq \ell.$$
  \end{proof}
 
 \begin{corollary}\label{cor:GKsofar}
  If $f $ is a nonzero element of $ \U(W_+)$ with $o(f) \leq 2$, then $$\GK \U(W_+)/(f) < \infty.$$
 \end{corollary}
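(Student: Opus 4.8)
The plan is to transport the question, via the two associated-graded constructions of Section~\ref{DIMENSION}, to the statement about Poisson ideals of $\S(W_+)$ already established in Proposition~\ref{prop:part3}. Set $J = (f)$, the two-sided ideal of $\U(W_+)$ generated by $f$; let $I = \gr_d(J) \triangleleft \U(W_+)$ be the associated $d$-graded ideal; and let $K = \gr_o(I) = \gr_{do}(I) \triangleleft \S(W_+)$, which is a Poisson ideal. By Theorem~\ref{thm:GKW2} we have $\GK \U(W_+)/(f) = \PGK \S(W_+)/K$, so it suffices to show $\PGK \S(W_+)/K < \infty$.

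The next step is to produce a nonzero element of order at most two in $K$, so that Proposition~\ref{prop:part3} applies to the Poisson ideal $K$. Let $g = \gr_d(f) \in I$ be the top $d$-homogeneous component of $f$; since $f \neq 0$ we have $g \neq 0$ (and $g\in\gr_d(J)=I$ because $f \in J$). Expanding any element in the PBW basis $\{ e_{\lambda_1} \cdots e_{\lambda_k} : \lambda_1 \leq \cdots \leq \lambda_k \}$, each basis monomial is $d$-homogeneous and has order equal to its length; hence the order of an element is the maximal length of a PBW monomial occurring in it, and passing to a $d$-homogeneous component merely discards some of these monomials. Therefore $o(g) \leq o(f) \leq 2$, and the order symbol $\gr_o(g) \in \gr_o(I) = K$ is a nonzero element of $\S(W_+)$ of order $o(g) \leq 2$. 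Proposition~\ref{prop:part3} then gives $\PGK \S(W_+)/K < \infty$, which together with the previous paragraph finishes the proof. (The degenerate case $o(f) = 0$ is trivial, since then $f$ is a nonzero scalar and $\U(W_+)/(f) = 0$.)

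I do not expect any serious obstacle at this stage: the substantive work is contained in Theorem~\ref{thm:S2} and Proposition~\ref{prop:part3}, together with Theorem~\ref{thm:GKW2}. The only point requiring attention is the bookkeeping in the middle step --- checking that passing first to the top $d$-homogeneous component and then to the order symbol cannot increase the order --- and this is immediate from the compatibility of the degree grading and the order filtration on the PBW basis noted above. It is worth remarking that $\gr_o(g)$ need not be a single monomial, but this is irrelevant, as Proposition~\ref{prop:part3} only requires the existence of \emph{some} nonzero element of order $\leq 2$ in the ideal.
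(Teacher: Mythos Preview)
Your proposal is correct and follows exactly the same route as the paper's proof, which simply cites Theorem~\ref{thm:GKW2} and Proposition~\ref{prop:part3}. You have spelled out the one detail the paper leaves implicit---namely, that passing first to the top $d$-component and then to the order symbol yields a nonzero element of order $\leq 2$ in $K$---and your justification via the PBW basis is accurate.
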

 \begin{proof}
This follows from Theorem~\ref{thm:GKW2} and Proposition~\ref{prop:part3}.
\end{proof}

We conjecture that Corollary~\ref{cor:GKsofar} is true without restriction on $o(f)$, see Conjecture~\ref{conj2}. 
Likewise, we conjecture that Proposition~\ref{prop:part3} holds for arbitrary Poisson ideals of $\S(W_+)$.

Recall that a module $M$ is {\em GK $d$-critical} if $\GK M = d$ and the GK-dimension of any proper quotient of $M$ is $< d$. That the adjoint representation of $W_+$ is GK 1-critical is Lemma~\ref{Lalev}.

\begin{corollary}\label{cor:2crit}
As a $\U(W_+)$-module, $\S^2(W_+)$ is GK 2-critical.
\end{corollary}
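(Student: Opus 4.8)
The plan is to verify the two defining properties of GK $2$-criticality in turn: that $\GK \S^2(W_+) = 2$, and that every proper quotient of the $\U(W_+)$-module $\S^2(W_+)$ has GK-dimension $<2$ (in fact $\le 1$). Since $\U(W_+)$ is generated as a $\kk$-algebra by $e_1$ and $e_2$, GK-dimensions of $\U(W_+)$-modules may be computed with respect to the finite-dimensional generating subspace $V=\kk\cdot(1,e_1,e_2)$; I will use repeatedly that a length-$n$ word in $e_1,e_2$ raises the $d$-grading by at most $2n$, hence carries $\S^2(W_+)_{\le p}$ into $\S^2(W_+)_{\le p+2n}$.

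\textbf{The value is $2$.} As $\dim\S^2(W_+)_m=\lfloor m/2\rfloor$, we have $\dim\S^2(W_+)_{\le n}=O(n^2)$; combined with the remark above, $V^nW\subseteq\S^2(W_+)_{\le\deg W+2n}$ for every finite-dimensional $W$, so $\GK\S^2(W_+)\le 2$. For the reverse inequality I would look at the cyclic submodule $M_0:=\U(W_+)\cdot x_1x_2$. By Lemma~\ref{lem:kl} there is $\gamma_0\in\Gamma$ with $\gamma(M_0)\supseteq\gamma_0+\Gamma$, so in degree $m$ the module $M_0$ contains at least $\#(\gamma_0+\Gamma)_m\sim m/2$ elements with distinct leading monomials. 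The essential observation is that in the construction behind Lemma~\ref{lem:kl} (and Theorem~\ref{thm:S2}) the semigroup $\Sigma=\langle(0,1),(3,3)\rangle$ acts through the operator $e_1$ and through a fixed $d$-homogeneous degree-$6$ operator, each a word of bounded length in $e_1,e_2$, and likewise the ``base'' leading monomials $\gamma(f)+(k,6-k)$ are produced by bounded-length operators. Consequently every leading monomial of $\gamma(M_0)$ of degree $\le d$ is realized by applying to $x_1x_2$ a word of length $O(d)$, giving $\dim\bigl(V^{O(d)}\cdot(\kk x_1x_2)\bigr)\ge\sum_{m\le d}\#(\gamma_0+\Gamma)_m\sim d^2/4$. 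Hence $\GK M_0\ge 2$, and since $M_0\subseteq\S^2(W_+)$ we conclude $\GK\S^2(W_+)=2$.

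\textbf{Proper quotients are small.} Let $N$ be a nonzero submodule and fix $0\neq f\in N$. Since $\S^2(W_+)/N$ is a quotient of $\S^2(W_+)/(\U(W_+)\cdot f)$, it is enough to bound the latter. By Lemma~\ref{lem:kl} there is $(k,\ell)\in\Gamma$ with $\gamma(\U(W_+)\cdot f)\supseteq(k,\ell)+\Gamma$; for each $(a,b)$ in this cone choose $g_{a,b}\in\U(W_+)\cdot f$ with leading monomial $x_ax_b$. Because $\prec$ refines degree, $g_{a,b}\in\S^2(W_+)_{\le a+b}$, and the $g_{a,b}$ of degree $\le e$ are linearly independent, so
\[
\dim\bigl(\U(W_+)\cdot f\cap\S^2(W_+)_{\le e}\bigr)\ \ge\ \sum_{m\le e}\#\bigl((k,\ell)+\Gamma\bigr)_m .
\]
Since $\#((k,\ell)+\Gamma)_m$ differs from $\dim\S^2(W_+)_m=\lfloor m/2\rfloor$ by at most a constant depending only on $k,\ell$, the right-hand side differs from $\dim\S^2(W_+)_{\le e}$ by $O(e)$. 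Therefore $\dim\bigl(\S^2(W_+)/(\U(W_+)\cdot f)\bigr)_{\le e}=O(e)$, and one more application of the degree-raising remark yields $\GK\bigl(\S^2(W_+)/(\U(W_+)\cdot f)\bigr)\le 1$. Hence $\GK(\S^2(W_+)/N)\le 1<2$ for every nonzero $N$, which together with $\GK\S^2(W_+)=2$ is precisely GK $2$-criticality.

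\textbf{Expected difficulty.} There is no serious obstacle: the heavy lifting is in Theorem~\ref{thm:S2} and Lemma~\ref{lem:kl}. The only points requiring a little care are the two bookkeeping facts used above --- that each $\Sigma$-shift and each base monomial in $\gamma(M_0)$ comes from an operator whose $e_i$-word-length is bounded uniformly in $\Gamma$ (so that degree-$\le d$ leading monomials appear within $O(d)$ steps), and that $\gamma_0+\Gamma$ omits only $O(1)$ monomials of $\S^2(W_+)$ in each degree --- both of which follow immediately from the explicit formulas already recorded. One could alternatively deduce $\GK M_0=2$ from the good-growth machinery of Section~\ref{DIMENSION} applied to $M_0$, but the direct count seems shortest.
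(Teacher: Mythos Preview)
Your proof is correct and takes essentially the same route as the paper's: both use Lemma~\ref{lem:kl} to show that any cyclic submodule captures a full shifted cone of leading monomials, count the complement of the cone (a constant per degree) to get $\GK$ of a proper quotient $\le 1$, and pass between monomial counts and GK-dimension via the degree filtration. The paper proves the proper-quotient half exactly as you do and leaves the verification that $\GK\S^2(W_+)=2$ to the reader; you have supplied that half, the only extra ingredient being the (correct) observation that the operators producing the cone in $\gamma(M_0)$ all lie in $V^{O(d)}$, since $\U(W_+)_6$ acts through a fixed finite-dimensional space of words in $e_1,e_2$.
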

\begin{proof}
Let $f \neq 0$ be an element of $\S^2(W_+)$ and let   $M = \U(W_+)\cdot f$. 
Let  $\gamma = (k, \ell)$ be the element of $\Gamma$ given by Lemma~\ref{lem:kl}, so $\gamma(M) \supseteq (k, \ell) + \Gamma$.
Let $N = \S^2(W_+)/M$ and for $d \in \NN$ let $N_{\leq d}$ be the image of $\S^2(W_+)_{\leq d}$ in $N$.  

As in the proof of Proposition~\ref{prop:part3}, for any $g \in \S^2(W_+)$ there is $g' \in\S^2(W_+)$ so that $g - g' \in M$, $d(g') \leq d(g)$, and $g'$ involves only monomials of the form $x_i x_j$ with $i \leq k $ or $j \leq \ell$.
For fixed $d$, the number of such $x_i x_j$ with $i+j = d$ is $\leq k+\ell$.
Thus $\dim N_{\leq d} \leq (k+\ell)d$ and so grows at most linearly in $d$, and it follows that $\GK N \leq 1$ as desired.

It is easy to check that  $\GK \S^2(W_+)=2$ 
 by similar the arguments to those in  the proof of Lemma~\ref{lem:kl}.
We leave the details to the reader.
\end{proof}

\section{Appendix:  Macaulay2 computations} \label{APPENDIX}
We present the routines needed for the proof of Lemma~\ref{lem:S2one}.

\noindent {\bf Routine A.1.} The following Macaulay2 code is used in the proof of the general case of Lemma~\ref{lem:S2one}.

We first define the vectors $v_0, \dots v_6$ from the proof of Lemma~\ref{lem:S2one}.

{\footnotesize
\begin{verbatim}
Macaulay2, version 1.7
with packages: ConwayPolynomials, Elimination, IntegralClosure, LLLBases, PrimaryDecomposition, ReesAlgebra, 
         TangentCone
\end{verbatim}
}
\vspace{-.15in}
\begin{multicols}{2}
{\footnotesize
\begin{verbatim}
i1 : R = QQ[I,J];
i2 : v0={(J-6),	      	   	     	       
     	  (J+4)*(J-5),                 
     	  (J+2)*(J-4),     	    
     	  (J+4)*(J+3)*(J-4),         	 	 
     	  J*(J-3),	               
     	  (J+4)*(J+1)*(J-3),            
     	  (J+4)*(J+3)*(J+2)*(J-3),      
     	  (J+2)*J*(J-2),    	        
     	  (J+4)*(J+3)*J*(J-2),	       	 
     	  (J+4)*(J+3)*(J+2)*(J+1)*(J-2),    
     	  (J+4)*(J+3)*(J+2)*(J+1)*J*(J-1)}; 
i3 : v1={0, (I-1)*(J-5),                 
     	  0, 2*(I-1)*(J+3)*(J-4),          	 
     	  0, (I-1)*(J+1)*(J-3),             
     	  3*(I-1)*(J+3)*(J+2)*(J-3),      
     	  0, 2*(I-1)*(J+3)*J*(J-2),	       	  
     	  4*(I-1)*(J+3)*(J+2)*(J+1)*(J-2),   
     	  6*(I-1)*(J+3)*(J+2)*(J+1)*J*(J-1)};   	  
i4 : v2={0, 0, (I-2)*(J-4),     	   	   
     	  I*(I-1)*(J-4),         	 	 
     	  0, (I-2)*(J+2)*(J-3),             
     	  3*I*(I-1)*(J+2)*(J-3),      
     	  3*(I-2)*J*(J-2),    	      
     	  I*(I-1)*J*(J-2)+2*(I-2)*(J+2)*(J+1)*(J-2),	       	  
     	  6*I*(I-1)*(J+2)*(J+1)*(J-2)
     	   +(I-2)*(J+2)*(J+1)*J*(J-1),   
     	  15*I*(I-1)*(J+2)*(J+1)*J*(J-1)};  
i5 : v3={0, 0, 0, 0, 2*(I-3)*(J-3),	                
     	  (I+1)*(I-2)*(J-3)+(I-3)*(J+1)*(J-2),         
     	  (I+1)*I*(I-1)*(J-3)+(I-3)*(J+1)*J*(J-1),       
     	  0, 4*(I+1)*(I-2)*(J+1)*(J-2),	       	  
     	  4*(I+1)*I*(I-1)*(J+1)*(J-2)
     	   +4*(I+1)*(I-2)*(J+1)*J*(J-1),   
     	  20*(I+1)*I*(I-1)*(J+1)*J*(J-1)};  
i6 : v4={0, 0, (I-4)*(J-2),     	   	   
     	  (I-4)*J*(J-1),         	 	 
     	  0, (I+2)*(I-3)*(J-2),            
     	  3*(I+2)*(I-3)*J*(J-1),     
     	  3*I*(I-2)*(J-2),    	      	   
     	  2*(I+2)*(I+1)*(I-2)*(J-2)+I*(I-2)*J*(J-1),	        
     	  (I+2)*(I+1)*I*(I-1)*(J-2)
     	   +6*(I+2)*(I+1)*(I-2)*J*(J-1),    
     	  15*(I+2)*(I+1)*I*(I-1)*J*(J-1)};  
i7 : v5={0, (I-5)*(J-1),                   
     	  0, 2*(I+3)*(I-4)*(J-1),         	 	 
     	  0, (I+1)*(I-3)*(J-1),             
     	  3*(I+3)*(I+2)*(I-3)*(J-1),     
     	  0, 2*(I+3)*I*(I-2)*(J-1),	       	  
     	  4*(I+3)*(I+2)*(I+1)*(I-2)*(J-1), 
     	  6*(I+3)*(I+2)*(I+1)*I*(I-1)*(J-1)};      
i8 : v6={(I-6),	      	   	     	       
     	  (I+4)*(I-5),                  
     	  (I+2)*(I-4),     	   	   	 
     	  (I+4)*(I+3)*(I-4),         	  
     	  I*(I-3),	               
     	  (I+4)*(I+1)*(I-3),             
     	  (I+4)*(I+3)*(I+2)*(I-3),       
     	  (I+2)*I*(I-2),    	      	   
     	  (I+4)*(I+3)*I*(I-2),	       	  
     	  (I+4)*(I+3)*(I+2)*(I+1)*(I-2),   
     	  (I+4)*(I+3)*(I+2)*(I+1)*I*(I-1)};  
\end{verbatim}
}
\end{multicols}	
We define an automorphism $f$ of $ \ZZ[i,j]$ which sends $i \mapsto i+1, j \mapsto j-1$.
\begin{multicols}{2}
{\footnotesize
\begin{verbatim}       
i9 : f=map(R,R,{I+1,J-1});
i10 : dof = L -> toList apply(0..10, i->f(L#i));
\end{verbatim}
}
\end{multicols}	
We compute the locus on which the vectors $v_0, v_1, v_2, v_3, f(v_0), f(v_1), f(v_2), f^2(v_0), f^2(v_1), f^3(v_0)$ are linearly independent, and find the top-dimensional components of this locus.
\begin{multicols}{2}
{\footnotesize
\begin{verbatim}     
i11 : M=matrix{v0,v1,v2,v3,dof(v0),dof(v1),
          dof(v2), dof(dof(v0)),dof(dof(v1)),
          dof(dof(dof(v0)))};
i12 : K=minors(10,M);
i13 : KK=topComponents K;
i14 : associatedPrimes KK
o14 = {ideal I, ideal(J - 1), ideal(J + 1), 
          ideal(I + 1), ideal(I - 1), ideal(I - J + 3)}
\end{verbatim}
}
\end{multicols}

\noindent {\bf Routine A.2.} 
This routine is used for the case $j=i+6$ of Lemma~\ref{lem:S2one}.
\vspace{-.15in}

\begin{multicols}{2}
{\footnotesize
\begin{verbatim}
i15 : S=QQ[I];
i16 : g6=map(S,R,{I,I+6});
i17 : dg6 = L -> toList apply(0..10, i->g6(L#i));
i18 : N6=matrix{dg6(v0),dg6(v1),dg6(v2),dg6(v3), 
        dg6(dof(v0)), dg6(dof(v1)),dg6(dof(v2)), 
       dg6(dof(dof(v0))), dg6(dof(dof(v1))),
       dg6(dof(dof(dof(v0+v6))))};
i19 : J6=minors(10,N6);
i20 : associatedPrimes J6
o20 = {ideal I, ideal(I - 1), ideal(I + 5), 
         ideal(I + 1), ideal(I + 7)}
  \end{verbatim}
}
\end{multicols}

\noindent {\bf Routine A.3.} 
This routine is used for the case $j=i+5$ of Lemma~\ref{lem:S2one}.

\vspace{-.15in}
\begin{multicols}{2}
{\footnotesize
\begin{verbatim}
i21 : g5=map(S,R,{I,I+5});
i22 : dg5 = L -> toList apply(0..10, i->g5(L#i));
i23 : N5=matrix{dg5(v0),dg5(v1),dg5(v2),dg5(v3),
          dg5(dof(v0)),dg5(dof(v1)),dg5(dof(v2)),
          dg5(dof(dof(v0))),dg5(dof(dof(v1+v6)))};
i24 : J5=minors(9,N5);
i25 : associatedPrimes J5
o25 = {ideal I, ideal(I - 1)}
  \end{verbatim}
}
\end{multicols}
\noindent {\bf Routine A.4.} 
This routine is used for the case $j=i+4$ of Lemma~\ref{lem:S2one}.

\vspace{-.15in}

\begin{multicols}{2}
{\footnotesize
\begin{verbatim}
i26 : g4=map(S,R,{I,I+4});
i27 : dg4 = L -> toList apply(0..10, i->g4(L#i));
i28 : N4=matrix{dg4(v0),dg4(v1),dg4(v2),dg4(v3),
          dg4(dof(v0)),dg4(dof(v1)),dg4(dof(v2+v6)),
          dg4(dof(dof(v0+v6))),dg4(dof(dof(v1+v5)))};
i29 : J4=minors(9,N4);
i30 : associatedPrimes J4
o30 = {ideal I, ideal(I - 1), ideal(I + 1)}
  \end{verbatim}
}
\end{multicols}
\noindent {\bf Routine A.5.} 
This routine is used for the case $j=i+3$ of Lemma~\ref{lem:S2one}.
\vspace{-.15in}

\begin{multicols}{2}
{\footnotesize
\begin{verbatim}
i31 : g3=map(S,R,{I,I+3});
i32 : dg3 = L -> toList apply(0..10, i->g3(L#i));
i33 : N3=matrix{dg3(v0),dg3(v1),dg3(v2),dg3(v3+v6),
        dg3(dof(v0)),dg3(dof(v1+v6)),dg3(dof(v2+v5))};
i34 : J3=minors(7,N3);
i35 : associatedPrimes J3
o35 = {ideal(I - 1), ideal(2I + 3)}
  \end{verbatim}
}
\end{multicols}
\noindent {\bf Routine A.6.} 
This routine is used for the case $j=i+2$ of Lemma~\ref{lem:S2one}.
\vspace{-.15in}

\begin{multicols}{2}
{\footnotesize
\begin{verbatim}
i36 : g2=map(S,R,{I,I+2});
i37 : dg2 = L -> toList apply(0..10, i->g2(L#i));
i38 : N2=matrix{dg2(v0),dg2(v1),dg2(v2+v6),dg2(v3+v5),
        dg2(dof(v0+v6)),dg2(dof(v1+v5)),
        dg2(dof(v2+v4))};
i39 : J2=minors(7,N2);
i40 : associatedPrimes J2
o40 = {ideal I, ideal(I - 1), ideal(I + 1)}
  \end{verbatim}
}
\end{multicols}
\noindent {\bf Routine A.7.} 
This routine is used for the case $j=i+1$ of Lemma~\ref{lem:S2one}.
\vspace{-.15in}

\begin{multicols}{2}
{\footnotesize
\begin{verbatim}
i41 : g1=map(S,R,{I,I+1});
i42 : dg1 = L -> toList apply(0..10, i->g1(L#i));
i43 : N1=matrix{dg1(v0),dg1(v1+v6),dg1(v2+v5),
        dg1(v3+v4)};
i44 : J1=minors(4,N1);
i45 : associatedPrimes J1
o45 = {}
  \end{verbatim}
}
\end{multicols}
\noindent {\bf Routine A.8.} 
This routine is used for the case $j=i$ of Lemma~\ref{lem:S2one}.
\vspace{-.15in}

\begin{multicols}{2}
{\footnotesize
\begin{verbatim}
i46 : g0=map(S,R,{I,I});
i47 : dg0 = L -> toList apply(0..10, i->g0(L#i));
i48 : N0=matrix{dg0(v0+v6),dg0(v1+v5),
        dg0(v2+v4), dg0(v3)};
i49 : J0=minors(4,N0);
i50 : associatedPrimes J0
o50 = {ideal(I - 1)}
  \end{verbatim}
}
\end{multicols}

\end{document}